\theoremstyle{plain}
\newtheorem{theorem}{Theorem}[section]
\newtheorem{proposition}[theorem]{Proposition}
\newtheorem{lemma}[theorem]{Lemma}
\newtheorem{corollary}[theorem]{Corollary}
\newtheorem{question}[theorem]{Question}
\newtheorem{definition}[theorem]{Definition}
\def\int{\text{interior}}
\def\hyp {\hbox {\rm {H \kern -2.8ex I}\kern 1.25ex}}
\def\reals {\hbox {\rm {R \kern -2.8ex I}\kern 1.15ex}}
\def\integers {\hbox {\rm { Z \kern -2.8ex Z}\kern 1.15ex}}
\def\naturals {\hbox {\rm {N \kern -2.8ex I}\kern 1.20ex}}
\def\rationals {\hbox {\rm { Q \kern -2.2ex l}\kern 1.15ex}}
\def\hyp {\hbox {\rm {H \kern -2.7ex I}\kern 1.25ex}}
\def\bar{\overline}
\newcommand{\AAA}{\mathcal{A}}
\newcommand{\DD}{\mathcal{D}}
\newcommand{\EE}{\mathcal{E}}
\newcommand{\R}{\mathbb{R}}
\newcommand{\C}{\mathbb{C}}
\newcommand{\Z}{\mathbb{Z}}
\newcommand{\TT}{\mathcal{T}}
\newcommand{\NN}{\mathcal{N}}
\newcommand{\nbhd}{\operatorname{\NN}}
\newcommand{\Hyp}{\mathbb{H}}
\newcommand{\boundary}{\partial}
\newcommand{\union}{\cup}
\newcommand{\intersect}{\cap}
\newcommand{\ssm}{\smallsetminus}
\newcommand{\hhat}{\widehat}
\long\def\@savemarbox#1#2{\global\setbox#1\vtop{\hsize\marginparwidth 
  \@parboxrestore\tiny\raggedright #2}}
\def\strutdepth{\dp\strutbox}
\def \ss{\strut\vadjust{\kern-\strutdepth \sss}}
\def \sss{\vtop to \strutdepth{
\baselineskip\strutdepth\vss\llap{$\diamondsuit\;\;$}\null}}
\def\strutdepth{\dp\strutbox}
\def \sst{\strut\vadjust{\kern-\strutdepth \ssss}}
\def \ssss{\vtop to \strutdepth{
\baselineskip\strutdepth\vss\llap{$\spadesuit\;\;$}\null}}
\def\strutdepth{\dp\strutbox}
\def \ssh{\strut\vadjust{\kern-\strutdepth \sssh}}
\def \sssh{\vtop to \strutdepth{
\baselineskip\strutdepth\vss\llap{$\heartsuit\;\;$}\null}}
\title[Discrete primitive-stable representations]{Discrete primitive-stable representations with large rank surplus}
\date{\today}
\author{Yair Minsky}
\address{\hskip-\parindent
        Department of Mathematics \\
        Yale University \\
        PO Box 208283 \\
        New Haven, CT 06520 \\
        USA}
\email{yair.minsky@yale.edu}
\author{Yoav Moriah}
\address{\hskip-\parindent
        Department of Mathematics \\
        Technion \\
        Israel}
\email{ymoriah@techunix.technion.ac.il}
\subjclass{Primary 57M}
\keywords{Primitive stable, Whitehead graph, Representations}
\thanks{Research partially  supported by  NSF grant DMS-0504019 and  BSF grant  2011256.
The second author would like to thank Yale University for its hospitality during
 a sabbatical in which the research was initiated, and the first
 author would like to thank the Technion for its hospitality during a
 visit in which it was completed.}
\begin{document}

\begin{abstract}
We construct a sequence of primitive-stable representations of free
groups into $PSL_2(\C)$ whose ranks go to infinity, but whose images
are discrete with quotient manifolds that converge geometrically to a
knot complement. In particular this implies that the rank and geometry
of the image of a primitive-stable representation imposes no constraint on the rank of
the domain. 
\end{abstract}

\maketitle

\section{Introduction} \label{introduction}
\newcommand\cv{{\mathcal X}}

\vskip20pt

Let $F_n$ denote the free group on $n$ generators, where $n\ge 2$. 
The space $Hom(F_n,PSL_2(\C))$ of representations of $F_n$ into
$PSL(2,\C)$ contains within it presentations of all hyperbolic 3-manifold
groups of rank bounded by $n$, and so is of central interest in
three-dimensional geometry and topology. On the other hand there is
also an interesting dynamical structure on $Hom(F_n,PSL_2(\C))$ coming
from the action of $Aut(F_n)$ by precomposition (see Lubotzky
\cite{lubotzky:autFn}). The interaction between the geometric and
dynamical aspects of this picture is still somewhat mysterious, and
forms the motivation for this paper. 

(Note that it is natural to identify representations conjugate in $PSL_2(\C)$, so that
in fact we often think about the {\em character variety} $\cv(F_n)$
and the natural action by $Out(F_n)$, the outer automorphism group of
$F_n$. This distinction will not be of great importance here.)

In \cite{minsky:primitive} the notion of a {\em primitive-stable}
representation $\rho:F_n \to PSL_2(\C)$ was introduced. The set
$PS(F_n)\subset \cv(F_n)$ of
primitive stable conjugacy classes is open and
contains all Schottky representations (discrete, faithful
representations with compact convex core), but it also contains
representations with dense image, and nevertheless $Out(F_n)$ acts
properly discontinuously on $PS(F_n)$.  This implies, for example, that
$Out(F_n)$ does not act ergodically on the (conjugacy classes of)
representations with dense image.

Representations into $PSL_2(\C)$, whose images are discrete, torsion-free 
subgroups, give rise to hyperbolic 3-manifolds, and when the volume
of the 3-manifold is finite we know by Mostow-Prasad rigidity that the
representation depends uniquely, up to conjugacy, on the presentation
of the abstract fundamental group. Hence it makes sense to ask whether
a presentation of such a 3-manifold group is or is not
primitive-stable. 

It is not hard to show that primitive-stable presentations of closed
3-manifold groups do exist, and such presentations are constructed in
this paper, but we are moreover concerned with the relationship
between the rank of the presentation and the rank of the group. 

Our goal will be to show that the rank of the presentation can in fact
be arbitrarily higher than the rank of the group, and more specifically:

\begin{theorem}\label{maintheorem} 
There is an infinite sequence of  representations $\rho^r: F_{n_r} \to PSL_2(\C)$, where
$n_{r} = n_0 + 2r$,  so that :

\begin{enumerate}

\item Each $\rho^r$ has discrete and torsion-free image. 

\smallskip

\item Each $\rho^r$ is primitive-stable. 

\smallskip

\item The quotient manifolds $N_r = \Hyp^3/\rho^r(F_{n_r})$ converge geometrically to $N_{\infty}$, 
where $N_{\infty}$ is a knot complement in $S^3$. 

\end{enumerate}

\end{theorem}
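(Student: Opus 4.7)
The plan is to realize each $\rho^r$ as a surjection $F_{n_r} \twoheadrightarrow \pi_1(N_r)$, where $N_r$ is a hyperbolic 3-manifold converging geometrically to a preselected hyperbolic knot complement $N_\infty$, and the generating set for $\pi_1(N_r)$ is enlarged from its intrinsic (bounded) size to rank $n_r = n_0 + 2r$ by iteratively adding pairs of redundant auxiliary generators designed to preserve primitive-stability.

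Concretely, I would first fix a hyperbolic knot $K \subset S^3$ and set $N_\infty = S^3 \smsm K$. A geometrically convergent sequence $N_r \to N_\infty$ is available via Thurston's hyperbolic Dehn surgery theorem: taking $N_r$ to be closed hyperbolic manifolds obtained from long Dehn fillings on the cusp, the associated Kleinian groups $\pi_1(N_r)$ converge to $\pi_1(N_\infty)$ in the Chabauty topology, which realizes geometric convergence of the quotient manifolds. The Heegaard genus of such $N_r$ is bounded independently of $r$, so to realize $\pi_1(N_r)$ as an image of $F_{n_r}$ with $n_r = n_0 + 2r \to \infty$ we must introduce many redundant generators. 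I would do this inductively: at step $r$, add a pair $(a_r, b_r)$ of new generators chosen so that $a_r$ represents a short, geometrically controlled element and $b_r$ is a companion making the pair behave like a Heegaard-type stabilization handle, increasing the rank of the free-group domain by $2$ without changing the image group.

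Primitive-stability of $\rho^r$ would be verified via a Whitehead-graph criterion from \cite{minsky:primitive}: provided the cyclically-reduced Whitehead graph of every primitive element of $F_{n_r}$ is connected and contains no cut vertex, the translation length of $\rho^r(w)$ in $\Hyp^3$ grows at least linearly in the word length of $w$, which yields primitive-stability. The central technical difficulty is maintaining this combinatorial condition uniformly as $r$ grows: the geometric convergence $N_r \to N_\infty$ naturally forces axes of generators close together in $\Hyp^3$, a configuration that tends to produce precisely the kind of cut vertices that would destroy the Whitehead-graph hypothesis. Choosing the added pair $(a_r, b_r)$ so that its Whitehead-graph interactions with all previously chosen generators propagate $2$-vertex-connectedness from rank $n_r$ to rank $n_r + 2$, while simultaneously respecting the geometric positioning needed for the Chabauty convergence, is the heart of the proof.
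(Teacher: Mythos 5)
The central mechanism you propose---enlarging the rank by ``iteratively adding pairs of redundant auxiliary generators'' that map into the subgroup generated by the previous ones---cannot work, and the paper points this out explicitly in the introduction. If a new generator $a_r$ maps to a word $w$ in the images of the old generators, then $a_rw^{-1}$ is a primitive element of the enlarged free group whose image is trivial, so the corresponding leaf of $\mathcal P$ is sent to a bounded set and can never be a uniform quasigeodesic; primitive stability is destroyed instantly, and the same happens for a Heegaard stabilization (the new primitive element $\partial D_2$ dies in the manifold group). The paper's actual route to high rank is geometric rather than algebraic: the knot $K$ is carried by a trellis on a genus-$n_0$ Heegaard surface of $S^3$, and \emph{flype} isotopies re-embed the same knot on Heegaard surfaces $\Sigma^r$ of genus $n_0+2r$. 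The free group $F_{n_r}$ is $\pi_1$ of the inner handlebody $H_1^r$, mapping to $\pi_1(S^3\ssm K)$ non-surjectively; the decisive input keeping the new generators from being ``redundant'' in the fatal sense is Lustig--Moriah's theorem that $\Sigma^r\ssm K^r$ is incompressible in both handlebodies. Surjectivity is then achieved by $1/m$ Dehn filling along the surface framing (a Dehn twist along $\Sigma^r$), and primitive stability of the surjective $\rho^r_m$ follows from primitive stability of the limit $\rho^r_\infty$ plus openness of $PS(F_{n_r})$, with geometric convergence to the knot complement coming from Thurston's Dehn filling theorem by taking $m_r$ large.

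You have also misstated the Whitehead-graph criterion in a way that makes it vacuous: by Whitehead's theorem the Whitehead graph of a \emph{primitive} element always has a cut point or is disconnected, so requiring every primitive element to have a cut-point-free Whitehead graph is impossible. The criterion actually used (Theorem 4.1 of \cite{minsky:primitive}) is that $\rho$ restricted to the handlebody group be discrete, faithful and \emph{geometrically finite} with a single cusp $c$ whose Whitehead graph $Wh(c,X)$ is cut point free; the cut-point-free word is the knot itself read off the trellis, not the primitives. Establishing the hypotheses of that criterion is where essentially all the work of the paper lies: one must show the knot complement is hyperbolic and atoroidal, that the end of the cover corresponding to $\pi_1(H_1^r)$ is not simply degenerate (via Thurston's covering theorem, which requires showing the exterior pared handlebody is not an $I$-bundle), and that the single parabolic class is the knot---none of which is addressed by the proposed inductive choice of generator pairs.
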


In particular note that, because the quotient manifolds converge
geometrically to a fixed finite volume limit,  the rank as well as the
covolume of the image groups remains bounded while $n_r\to\infty$ (see
e.g. Thurston \cite{wpt:notes}), hence: 

\begin{corollary} \label{arbitraryhighrank}
There exists $R$ such that, for each $n$, there is a lattice in
$PSL_2(\C)$ with rank bounded by $R$, which is the image of a
primitive stable representation of rank greater than $n$.  
\end{corollary}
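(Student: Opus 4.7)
The plan is to read Corollary~\ref{arbitraryhighrank} off from Theorem~\ref{maintheorem} by combining geometric convergence with the Jørgensen--Thurston structure of the bounded-volume part of the deformation space of hyperbolic 3-manifolds.

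First, I would apply Theorem~\ref{maintheorem} to fix the sequence $\rho^r$. By conclusion~(3), the quotients $N_r = \Hyp^3/\rho^r(F_{n_r})$ converge geometrically to the finite-volume knot complement $N_\infty \subset S^3$. Standard properties of geometric convergence to a finite-volume limit (see Thurston \cite{wpt:notes}) imply that for $r$ sufficiently large the $N_r$ themselves have finite volume, with $\mathrm{vol}(N_r)$ uniformly bounded; in particular each image $\rho^r(F_{n_r}) \cong \pi_1(N_r)$ is a lattice in $PSL_2(\C)$.

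Next I would bound the rank of these lattices via J\o rgensen--Thurston theory: the collection of complete finite-volume hyperbolic 3-manifolds of volume at most $V$ consists of hyperbolic Dehn fillings of only finitely many cusped parent manifolds. Because hyperbolic Dehn filling introduces only extra relations in $\pi_1$ (without enlarging a generating set), the rank of $\pi_1(N_r)$ is bounded above, uniformly in $r$, by the maximum rank of $\pi_1$ among these finitely many parents. Let $R$ denote this bound.

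Finally, given any $n$, choose $r$ large enough that $n_r = n_0 + 2r > n$. Then Theorem~\ref{maintheorem}(2) supplies a primitive-stable representation of $F_{n_r}$ whose image is a lattice in $PSL_2(\C)$ of rank at most $R$, which is exactly the content of the corollary. The only non-formal step is the rank bound itself, which is where J\o rgensen--Thurston does the real work; once that is in hand, everything else is mechanical.
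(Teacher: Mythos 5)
Your argument is correct and is essentially the paper's own: the authors derive the corollary in one line by noting that geometric convergence to a fixed finite-volume limit bounds the covolume and hence (via the J{\o}rgensen--Thurston finiteness you spell out, citing Thurston's notes) the rank of the image lattices, while $n_r\to\infty$. The only remark worth adding is that in this construction the bound is even more immediate, since each $N_r$ is by construction a Dehn filling of the single knot complement $S^3\smallsetminus K$, so $\pi_1(N_r)$ is a quotient of $\pi_1(S^3\smallsetminus K)$ and one may take $R$ to be the rank of that one group; this also settles, without appeal to general properties of geometric limits, that each $N_r$ is closed and hence a lattice quotient.
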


As the reader might guess our construction involves a sequence of Dehn
fillings of a knot  complement, and in particular the 
manifolds $N_r$ are in infinitely many homemorphism types. Thus we are
currently unable to answer the following natural question:

\begin{question}
 Is there a single lattice  $G \subset PSL_2(\mathbb{C})$ which has
 primitive stable presentations of arbitrarily high rank? 
\end{question}

To put this in context we note that  (as follows from the results in
\cite{minsky:primitive}) simply adding generators to a representation
which map to the group generated by the previous generators
immediately spoils the property of primitive stability. Thus the
existence of primitive stable presentations is
delicate to arrange.

On the other hand, we do not have examples in the other direction
either: 

\begin{question}
Are there any lattices in $PSL_2(\C)$ which do not have {\em any}
primitive stable presentations? 
\end{question}

The only tool we have for proving primitive-stability involves
Heegaard splittings which must satisfy a number of interacting
conditions. It would be interesting to know if this is always the
case: 

\begin{question}
Is every primitive stable presentation of a closed hyperbolic
3-manifold group {\em geometric}, i.e. does it arise from one side of
a Heegaard splitting?
\end{question}

\subsection*{Outline of the construction}

Our starting point is a class of knots supported on surfaces in $S^3$
in a configuration known as a {\em trellis}, as previously studied by
Lustig-Moriah \cite{lustig-moriah:highgenus}. The surface $\Sigma$ on
which such a knot $K$ is supported splits $S^3$ into two
handlebodies. For appropriately chosen special cases we find that the
complement $S^3\ssm K$ is hyperbolic, and that the representation
obtained from one of the handlebodies is primitive stable. Most of the
work for this is done in Section \ref{SCPS}, about which we remark more
below. 

To our chosen examples we can apply {\em flype moves} 
(as used by Casson-Gordon, see Moriah-Schultens \cite[Appendix]{moriah-schultens}),
which are isotopies of the knot that produce new trellis
projections, with higher genus. We show that these new
projections still yield primitive stable representations. 

Hence our knot complement $S^3\ssm K$  admits a sequence of homomorphisms
$\rho^r_{\infty}:F_{n_r} \to \pi_1(S^3\ssm K)$ with ranks $n_r \to\infty$, all
of which are primitive stable. However, these maps are not
surjective. 

To address this issue  we perform Dehn fillings on
$S^3\ssm K$, obtaining closed manifolds equipped with surjective
homomorphisms from $F_{n_r}$. Thurston's Dehn Filling Theorem tells us that, fixing
the flype index $r$ and letting the Dehn filling coefficient go to
infinity, we eventually obtain hyperbolic manifolds, and the
corresponding representations $\rho^r_m$ converge to $\rho^r_{\infty}$. Since primitive
stability is an open condition we eventually obtain our desired
primitive stable presentations. 

\medskip

Section 2 provides a little bit of background on hyperbolic
3-manifolds. In Section 3, we discuss primitive stability and prove
Proposition \ref{mainproposition}, which gives topological conditions
for primitive stability of a representation arising from a Heegaard
splitting where a knot on the Heegaard surface has been deleted. The
proof of this is an application of Thurston's covering theorem, and
of the main result of \cite{minsky:primitive}.

In Section \ref{KCT} we introduce trellises and our notation for knots
carried on them, recall a theorem from Lustig-Moriah \cite{lustig-moriah:highgenus}, and discuss horizontal surgeries.

In Section \ref{SCPS} we show that, under appropriate assumptions, a knot
carried by a trellis satisfies the conditions of Proposition
\ref{mainproposition}, and moreover the same is true for the configurations obtained
by flype moves on this knot.
Theorem \ref{hyperbolic} establishes that the knot complements we work
with are hyperbolic.  Intuitively one expects that complicated
diagrams such as we are using should ``generically'' yield hyperbolic
knots, but the proof turns out to be somewhat long and
painful. We perform a case-by-case analysis of the features of the
knot diagram, which is complicated by various edge effects in the
trellis. This analysis shows  that the manifold has no essential
tori, and  the same techniques also apply, in Proposition
\ref{flypedtrellis}, to show that the exterior pared handlebody
determined by a flyped trellis is never an $I$-bundle, which is
also one of the conditions needed in Proposition
\ref{mainproposition}.

The level of generality we chose for our family of examples, for
better or worse, is restricted enough to simplify some of the
arguments in Section \ref{SCPS}, but still broad enough to allow a
wide variation. It is fairly clear that the construction should work for an
even wider class of examples, but satisfying primitive stability, hyperbolicity, 
as well as the no-$I$-bundle condition is tricky and the resulting complication of our 
arguments would have diminishing returns for us and our readers. 

\vskip15pt

\section{Cores and ends of hyperbolic manifolds}\label{background}

\vskip10pt

In this section we review the basic structure of hyperbolic
3-manifolds and their ends. This will be applied in Section \ref{PS reps}.

A {\em compact core} of a 3-manifold $N$ is a compact submanifold $C$ of $N$ whose 
inclusion is a homotopy-equivalence. Scott \cite{scott:core} showed that every
irreducible 3-manifold with finitely generated fundamental group has a compact core.

Let $N = \Hyp^3/\Gamma$ be an oriented hyperbolic 3-manifold where $\Gamma$ is a
discrete torsion-free subgroup of $PSL_2(\C)$, and let $N^0$
denote $N$ minus its standard (open) cusp neighborhoods. Each cusp
neighborhood is associated to a conjugacy class of maximal parabolic
subgroups of $\Gamma$, and its boundary is an open annulus or a torus. 
For each component $T$ of $\boundary N^0$ let $T'$ be an
essential compact subannulus when $T$ is an annulus, and let $T'=T$ if
$T$ is a torus.

The relative compact core theorem of  McCullough \cite{mccullough:relative} and 
Kulkarni-Shalen \cite{kulkarni-shalen} implies:

\begin{theorem}\label{relative core}
There is a compact core $C\subset N^0$, such that $\boundary C \intersect T = T'$ for 
every component of $\boundary N^0$.

\end{theorem}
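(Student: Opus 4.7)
The plan is to start from a Scott compact core for $N$ and modify it near each cusp so that its boundary meets $\boundary N^0$ in the prescribed way. Let $C_0 \subset N$ be a compact core produced by Scott's theorem, and let $U_1,\ldots,U_k$ be the standard open cusp neighborhoods, so that $N^0 = N \smsm \bigcup_i U_i$ and each component $T_i$ of $\boundary N^0$ is either an open annulus (rank-$1$ cusp) or a torus (rank-$2$ cusp). Each cusp neighborhood $U_i$ is a product $T_i \times [0,\infty)$ and carries a single conjugacy class of maximal parabolic subgroup of $\Gamma$, which is necessarily detected by $C_0$ up to conjugacy.

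At a rank-$2$ cusp, I would first use the product structure to homotope $C_0$ so that its intersection with $U_i$ is a collar $T_i \times [0,\epsilon]$, and then discard that collar, replacing $C_0$ by $C_0 \smsm (T_i \times [0,\epsilon))$. The result still contains a representative of the $\Z\oplus\Z$ parabolic subgroup associated with $U_i$ (now as a boundary torus), so it remains $\pi_1$-injective and homotopy-equivalent to $N$; moreover it now contains $T_i = T_i'$ as a component of its boundary. At a rank-$1$ cusp, the product structure on $U_i = T_i \times [0,\infty)$ allows one to homotope $C_0$ so that $C_0 \intersect U_i$ is a product $T_i' \times [0,\epsilon]$ for some essential compact subannulus $T_i'$, and then cut back to $N^0$; since the replaced piece still carries the cyclic parabolic subgroup, the homotopy equivalence is preserved.

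The main technical obstacle is performing all of these modifications simultaneously while maintaining the property that the inclusion $C \hookrightarrow N$ is a homotopy equivalence. This is handled by carrying out each modification inside a disjoint regular neighborhood of its respective cusp, where a Seifert--van Kampen argument shows that the replacement of $C_0 \intersect U_i$ by a submanifold of $U_i$ carrying the same peripheral subgroup does not change the homotopy type of the ambient core. Equivalently, one can appeal directly to McCullough's relative core theorem \cite{mccullough:relative} (or the related result of Kulkarni--Shalen \cite{kulkarni-shalen}), applied to the pared manifold $(N^0, \boundary N^0)$ with the prescribed compact subsurfaces $T_i' \subset T_i$, which immediately yields a compact core $C \subset N^0$ with $\boundary C \intersect T_i = T_i'$ for all $i$.
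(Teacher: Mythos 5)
Your concluding appeal to McCullough's relative core theorem (and Kulkarni--Shalen) is exactly what the paper does: Theorem \ref{relative core} is stated there as an immediate consequence of those cited results, with no independent proof given. Note, however, that the citation is doing all the work --- your preceding hands-on sketch (homotoping a Scott core so that it meets each cusp neighborhood in a standard collar or subannulus carrying the peripheral subgroup, justified by an unspecified Seifert--van Kampen argument) is essentially a restatement of what the relative core theorem asserts rather than a proof of it, so it should not be presented as an independent argument.
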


We call $C$ a {\em relative compact core}, and call $P=\boundary C \intersect
\boundary N^0$ the {\em parabolic locus} on its boundary. The pair
$(C,P)$ is called a {\em pared manifold} (see Morgan \cite{morgan:geometrization}).

Suppose that the components of $\partial C \ssm P$ are incompressible. Then Bonahon showed 
in   \cite{bonahon} that the components of $N^0 \ssm C$  are in one-to-one correspondence
with the components of $\boundary C \ssm P$, and each of them is a neighborhood of a unique end of $N^0$. 
Note that $C$ can be varied by isotopy and by choice of the annuli $T'$, so that an end can have many
neighborhoods. 

We say that an end of $N^0$ is {\em geometrically finite} if it has a
neighborhood which is entirely outside of the convex core of $N$ 
(where the convex core of $N$ is the smallest closed convex subset of $N$
whose inclusion is a homotopy equivalence).

Bonahon's tameness theorem \cite{bonahon} shows that every end of $N^0$ is either
geometrically finite or {\em simply degenerate}.  We will not need the definition of  this 
property, but will use the fact that it has  Thurston's Covering Theorem as a consequence. 
The Covering Theorem will be described and used in the proof of Proposition \ref{mainproposition}.

We remark that something similar to all this holds when $\partial C \ssm P $ is compressible
via the solution to the Tameness Conjecture by Agol \cite{agol:tame} and Calegari-Gabai \cite{calegari-gabai:tame}, 
but we will not need to use this.

\vskip20pt

\section{Primitive stable representations}
\label{PS reps}

\vskip10pt

In this section we summarize notation and facts from
\cite{minsky:primitive}, and prove Proposition \ref{mainproposition}, which gives a
sufficient condition for certain representations arising from knot
complements to be primitive stable.

\vskip0pt

Fix a generating set $\{X_1,\ldots,X_n\}$ of $F_n$ and  let  $\Gamma$ be a bouquet of oriented circles labeled by  
the $X_i$.  We let $\mathcal{B} = \mathcal{B}(\Gamma)$ denote the set of bi-infinite (oriented) geodesics in  
$\Gamma$. Each such geodesic lifts to an $F_n$-invariant set of bi-infinite geodesics in the universal covering 
tree $\tilde{\Gamma}$. The set $\mathcal{B} $ admits a natural action by   $Out(F_n)$.

An element of $F_n$ is called {\em primitive} if it is a member of a free generating set, or 
equivalently  if  it is the image of a  generator $X_i$ by an element of $Aut(F_n)$. Let 
$\mathcal{P} =\mathcal{P} (F_n)$ denote the subset of $\mathcal{B} $ consisting of geodesic
representatives of conjugacy classes of primitive elements.  Note that $\mathcal{P} 
$ is $Out(F_n)$-invariant.  

Given a representation $\rho:F_n\to PSL_{2}(C)$ and a basepoint $x\in\mathbb{H}^3$, there is 
a unique map  $\tau_{\rho,x} : \tilde{ \Gamma} \to\mathbb{H}^3$ mapping a selected vertex of 
$\tilde{\Gamma}$ to $x$, $\rho$-equivariant, and mapping each edge to a geodesic. 

\begin{definition}\label{ps def}\rm
A representation $\rho:F_n\to PSL_{2}(C)$ is {\em primitive-stable} if there are constants 
$K,\delta \in \mathbb{R}$  and a basepoint $x\in\mathbb{H}^3$ such that $\tau_{\rho,x}$ takes 
the leaves of  $\mathcal{P}$ to $(K,\delta)$-quasi geodesics in $\mathbb{H}^3$. 

\end{definition}

The property of primitive stability of a representation  is invariant under conjugacy in $PSL_{2}(C)$ and action 
by $Aut(F_n)$.  We define $\mathcal{PS}(F_n)$ to be the set of  (conjugacy classes of) primitive-stable representations.

It is easy to see that Schottky representations are primitive stable;
indeed, the Schottky condition is equivalent to saying that the map
$\tau_{\rho,x}$ is a quasi-isometric embedding on the entire tree at  
once.

\begin{definition}\label{Whiteheadgraph}\rm 
Given a free group $F(X) = F(X_1, \dots, X_n)$ on $n$ generators and a
cyclically reduced word $w = w(X_1, \dots, X_n)$ the {\it Whitehead
  graph} of $w$ with respect to the generating set $X$, denoted 
$Wh(w,X)$, is defined as follows: The vertex set of the graph consists
of $2n$ points labeled by the elements of
$X^\pm \equiv \{X_i^{\pm 1}\}$.  For each sub-word $UV$ in $w$ or its
cyclic permutations, where $U,V\in X^\pm$, we place an edge between
the points $U$ and $V^{-1}$.

\end{definition}

\begin{definition}\label{cutpointfree}\rm We say that a graph $\Gamma$ is {\it cut point free} if it is connected 
and contains no point $p \in \Gamma$ so that $\Gamma \smallsetminus p$ is not connected.

\end{definition}

\vskip10pt

It is a theorem of
Whitehead~\cite{whitehead:graph,whitehead:equivalence} that if $u \in
F_n$ is a cyclically reduced primitive word 
then $Wh(u, X)$ is {\it not} cut point free.

If $H$ is a handlebody and $\gamma$ is a curve in $\boundary H$, for a generating system $X$ 
for $\pi_1(H)$ let $Wh(\gamma,X)$ denote $Wh([\gamma],X)$, where $[\gamma]$ is a reduced 
word in $X^\pm$ representing $\gamma$ in $\pi_1(H)$. 

\medskip

The main result of \cite{minsky:primitive} states:

\begin{theorem}\label{blocking PS} {\rm [Theorem 4.1 of \cite{minsky:primitive}]}
If $\rho:\pi_1(H) \to PSL_2(\C)$ is discrete, faithful and geometrically finite, with a single cusp 
$c$ such that $Wh(c,X)$ is cut-point free for some set of generators $X$ of $\pi_1(H)$, then
$\rho$ is primitive stable. 

\end{theorem}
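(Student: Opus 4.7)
The plan is to obtain uniform $(K,\delta)$-quasi-geodesic constants for the images of all leaves of $\mathcal{P}$ under $\tau_{\rho,x}$, by combining geometric control on the thick part of $N = \mathbb{H}^3/\rho(\pi_1(H))$ with a combinatorial Whitehead-graph argument that forbids deep cusp excursions along primitive leaves. Since $\rho$ is discrete, faithful, and geometrically finite with a single cusp, the convex core of $N$ minus a standard Margulis cusp neighborhood is compact. Consequently $\tau_{\rho,x}$ is automatically bilipschitz on any edge-path in $\tilde{\Gamma}$ whose image stays in the preimage of the thick part, and the only obstruction to uniform quasi-geodesicity of a primitive leaf is a deep cusp excursion, during which parabolic folding allows long path segments to contribute almost no net hyperbolic displacement.

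The first step is a geometric-to-combinatorial translation. By the Margulis lemma and standard horoball estimates, a segment of an image leaf that penetrates the cusp Margulis neighborhood to depth $T$ and exits must, up to bounded error at its two endpoints, track a power $\rho(c)^k$ of the parabolic generator, with $k \to \infty$ as $T \to \infty$. Pulling this back to $\tilde{\Gamma}$, the subword of the leaf in $X^{\pm}$ corresponding to the sojourn must, after cyclic reduction and possibly after replacing $c$ by a cyclic conjugate, contain $c^k$ as a literal subword.

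The second step is a combinatorial obstruction. If a cyclic conjugate of $c^2$ appeared as a cyclic subword of a cyclically reduced primitive $w$, then every cyclic consecutive pair of letters of $c$ would also be a consecutive pair of letters of $w$, so $Wh(c,X)$ would sit inside $Wh(w,X)$ as a subgraph on the common vertex set $X^{\pm}$. Since $Wh(c,X)$ is cut-point free, $Wh(c,X)\smsm v$ is connected for every vertex $v$; hence $Wh(w,X)\smsm v$ is also connected and $Wh(w,X)$ itself is cut-point free. But Whitehead's theorem produces a cut point in the Whitehead graph of every cyclically reduced primitive word, a contradiction. Therefore no cyclic conjugate of $c^2$ occurs as a subword of a cyclically reduced primitive $w$, and the exponents of $c$-power subwords appearing in primitive elements are uniformly bounded.

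Combining the two steps yields a uniform upper bound on cusp excursion depth for any primitive leaf. A standard concatenation argument --- bilipschitz control on each thick-part segment, plus bounded additive error on each bounded-depth cusp detour --- then assembles these pieces into uniform quasi-geodesic constants, establishing primitive stability. The main obstacle is the translation in step one: one must carefully track the word spelled in $X^{\pm}$ by a path in $\tilde{\Gamma}$ during a cusp sojourn, handle the cyclic-conjugacy ambiguity in representing the parabolic cusp subgroup, and verify that after cyclic reduction of $w$ the sojourn subword genuinely contains a high power of a cyclic conjugate of $c$, rather than merely being $F_n$-conjugate to such a power.
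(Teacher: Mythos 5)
First, note that this paper does not prove Theorem \ref{blocking PS}: it is imported verbatim as Theorem 4.1 of \cite{minsky:primitive}, so there is no internal proof to compare against. That said, your overall architecture (a thick/thin decomposition, a combinatorial lemma forbidding long $c$-powers in primitive words, and a local-to-global assembly of quasi-geodesic constants) is the natural and essentially the standard one for this statement, and your Step 2 is correct as written: an occurrence of a cyclic conjugate of $c^{2}$ in a cyclically reduced word $w$ forces $Wh(c,X)\subseteq Wh(w,X)$ as spanning subgraphs on the vertex set $X^{\pm}$, so connectivity and cut-point-freeness pass to $Wh(w,X)$ and contradict Whitehead's lemma; and since $Wh(c^{-1},X)=Wh(c,X)$, negative powers are covered too.

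The genuine gap is in Step 1, and it is not merely the bookkeeping you flag at the end: the mechanism itself does not exist. For a discrete group the orbit of the basepoint meets a horoball invariant under a parabolic only to uniformly bounded depth: if $\rho(g)x$ sits at depth $T$ in a horoball stabilized by a conjugate $P$ of $\rho(c)$, then $d\bigl(x,\rho(g)^{-1}P^{n}\rho(g)x\bigr)<1$ for all $|n|\lesssim e^{T}$, and discreteness bounds the number of group elements displacing $x$ by less than $1$, hence bounds $T$. Since consecutive orbit points along a leaf are a bounded distance apart, the image of \emph{every} leaf stays at uniformly bounded depth in every cusp horoball, so the ``segments penetrating to depth $T$'' on which your argument rests simply do not occur for large $T$, and your dichotomy detects nothing. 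The actual failure of quasi-geodesicity is long travel at bounded depth \emph{along} the horospherical boundary: the endpoints of such a sojourn are joined by an ambient geodesic of length only $2\log k+O(1)$ because it shortcuts through the horoball, while the tree-distance is linear in $k$. To convert this into a literal $c^{k}$ subword one needs geometric finiteness in the form of \emph{bounded} parabolic fixed points (not just compactness of the convex core minus cusps): the orbit points lying within bounded distance of both the convex hull and the horoball form finitely many $\langle c\rangle$-cosets $\{c^{m}b : b\in S\}$ with $S$ finite, so a long sojourn spells $b^{-1}c^{m'-m}b'$ with $b,b'$ of bounded length, whose reduced form contains a long power of a cyclic conjugate of $c^{\pm1}$. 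With that replacement, plus a uniform quasi-isometry statement on the cocompact part and a local-to-global argument, the proof can be completed; but as written, Step 1 would have to be discarded rather than refined.
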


\vskip10pt

This theorem will allow us to prove the following proposition, which in turn will be a step in the proof of 
Theorem \ref{maintheorem}. Here and in the rest of the paper, we let
$\bar\NN_X(Y)$ denote a closed regular neighborhood of $Y$ in $X$, and $\NN_X(Y)$ its
interior. If the ambient space $X$ is understood we abbreviate to  $\NN(Y)$.
When we say a manifold with boundary is hyperbolic we mean its
interior admits a complete hyperbolic structure. 

\vskip10pt

\begin{proposition} \label{mainproposition}
Let $M$ be a closed $3$-manifold with a Heegaard splitting  $M = H_1 \cup_{\Sigma} H_2$, 
where $\Sigma = \partial H_1 = \partial H_2$. Let  $\gamma \subset \Sigma$ be a simple closed 
curve so that $M_\infty = M \smallsetminus \nbhd(\gamma)$ is a  hyperbolic manifold and:

\begin{enumerate}

\item The group $\pi_1(H_1)$ has a generating set $x  = \{x_1, \dots. x_n\}$ so that the Whitehead graph
$Wh(\gamma, x)$ is cut point free.

\vskip5pt

\item The subsurface $\overline{\Sigma \smallsetminus \nbhd(\gamma)}$ is incompressible 
in $M_\infty$. 

\vskip5pt

\item 
The pared manifold  $(H_2,\bar\NN_{\Sigma}(\gamma))$ is not an $I$-bundle.

\end{enumerate}

\vskip5pt

Let $ \widehat{H_1} = H_1 - \nbhd(\gamma)$ and let  $\,\, i^0 : \widehat{H_1} \to M_\infty$ be the map induced 
by the inclusion of $H_1 \to M$ and $\eta:\pi_1( M_\infty) \to
PSL_2(\mathbb{C})$ be  
a holonomy representation for the hyperbolic structure on $int(M_\infty)$.
Then the representation   $\rho = \eta \,\circ\, i^0_{*}$ given by
$$\pi_1(\widehat H_1) \overset{i^0_{*}}{\to} \pi_1(M_\infty) \overset{\eta}{\to} PSL_2(\mathbb{C})$$ 
is primitive stable.

\end{proposition}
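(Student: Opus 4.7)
Our strategy is to verify the hypotheses of Theorem~\ref{blocking PS} for $\rho$. Note first that the inclusion $\widehat{H_1} \hookrightarrow H_1$ is a homotopy equivalence---we have only removed an open half-collar of $\gamma$ from the boundary---so $\pi_1(\widehat{H_1}) \cong \pi_1(H_1) = F_n$, and the annulus $A' \subset \boundary \widehat{H_1}$ coming from the ``inner'' annular face of the removed neighborhood of $\gamma$ has core curve conjugate to $\gamma$ inside $\widehat{H_1}$. Thus we must show that $\rho$ is discrete, faithful, geometrically finite, and has exactly one cusp, with cut-point-free Whitehead graph in some generating set; the last of these is exactly condition~(1) applied to $\gamma$, \emph{provided} the unique cusp turns out to be the one from $A'$.

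For discreteness and faithfulness, set $\Sigma_0 = \overline{\Sigma \ssm \nbhd(\gamma)}$ and write $M_\infty = \widehat{H_1} \cup_{\Sigma_0} \widehat{H_2}$. Incompressibility of $\Sigma_0$ in $M_\infty$ (condition~(2)) combines with standard Van Kampen / Bass--Serre theory to present $\pi_1(M_\infty)$ as an amalgamated product (or HNN extension, if $\gamma$ is non-separating on $\Sigma$) in which the factor $\pi_1(\widehat{H_1})$ injects. Hence $\rho$ lands faithfully in the discrete group $\eta(\pi_1(M_\infty))$. To get geometric finiteness, consider the cover $\tilde M \to M_\infty$ corresponding to $i^0_*(\pi_1(\widehat{H_1}))$; then $\widehat{H_1}$ lifts to a relative compact core of $\tilde M^0$ with parabolic locus $A'$ and incompressible remaining boundary $\Sigma_0$. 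Since $M_\infty$ is finite volume---a hyperbolic $3$-manifold with a single torus cusp---its only end is geometrically finite, so Thurston's Covering Theorem rules out any simply degenerate end of $\tilde M$ (such an end would have to finitely cover a simply degenerate end of $M_\infty$, of which there are none). Bonahon's tameness theorem, valid via condition~(2), eliminates wild ends, so every end of $\tilde M$ is geometrically finite and $\rho$ is geometrically finite.

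The main remaining task, and the chief obstacle, is to verify that $\rho$ has a \emph{unique} cusp, the one from $A'$. Suppose instead that some $\alpha \in \pi_1(\widehat{H_1})$, not conjugate to $\gamma$, has $\rho(\alpha)$ parabolic. Then $\alpha$ is freely homotopic in $M_\infty$ to a curve on the cusp torus $T = \boundary \nbhd(\gamma)$, yielding an essential annulus in $M_\infty$ which we place in general position with respect to $\Sigma_0$. Using incompressibility of $\Sigma_0$ to remove innermost disks and boundary-compressible arcs, the residual portion inside $\widehat{H_2}$ becomes an essential annulus of the pared handlebody $(H_2, \bar\NN_\Sigma(\gamma))$ with boundary on $\Sigma_0$ and on $\bar\NN_\Sigma(\gamma)$. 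By the characteristic-submanifold theory for pared $3$-manifolds, such an essential annulus forces $(H_2, \bar\NN_\Sigma(\gamma))$ to contain an $I$-bundle component meeting $\bar\NN_\Sigma(\gamma)$; since $H_2$ is a handlebody and $\bar\NN_\Sigma(\gamma)$ is a single annulus, the entire pared pair must then be an $I$-bundle, contradicting condition~(3). Hence $\rho$ has a unique cusp, represented by $\gamma$, whose Whitehead graph is cut-point free by condition~(1). Applying Theorem~\ref{blocking PS} completes the proof.
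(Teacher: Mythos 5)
Your overall frame --- reduce to Theorem~\ref{blocking PS} by establishing discreteness, faithfulness, geometric finiteness, and a unique cusp represented by $\gamma$ --- is the same as the paper's, and the discreteness/faithfulness step via condition~(2) is fine. But the two hard steps both contain genuine errors.

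First, geometric finiteness. You assert that a simply degenerate end of the cover $\tilde M \to M_\infty$ ``would have to finitely cover a simply degenerate end of $M_\infty$.'' That is not what the Covering Theorem says. Its conclusion is an alternative: either the covering is finite-to-one on a neighborhood of the degenerate end, \emph{or} $M_\infty$ has a finite cover fibering over the circle and $\tilde M$ is commensurable with the associated infinite cyclic (fiber) cover. The first alternative is indeed impossible when the base has finite volume, but the second is very much live --- $M_\infty$ is virtually fibered, and the fiber cover of a fibered finite-volume manifold is exactly an infinite cover with degenerate ends. Ruling out this second alternative is where essentially all of the paper's work goes, and it is there (not in the cusp count) that condition~(3) is used: the paper shows that the virtually-fibered alternative forces $(H_1,C(\gamma))$ to be an $I$-bundle (Lemma~\ref{I bundle covers}) and then, via Waldhausen's theorem on incompressible surfaces in products, forces the complementary pared handlebody $(H_2,\bar\NN_\Sigma(\gamma))$ to be an $I$-bundle as well, contradicting~(3). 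Your argument never engages with this case, so geometric finiteness is not established.

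Second, the unique cusp. Your key claim --- that an essential annulus in $(H_2,\bar\NN_\Sigma(\gamma))$ forces the entire pared pair to be an $I$-bundle because the pared locus is a single annulus --- is false. The characteristic submanifold of a pared handlebody can be a proper piece (e.g.\ a solid-torus or small $I$-bundle component) without the whole handlebody being an $I$-bundle; in fact the paper itself exhibits exactly this situation in Proposition~\ref{flypedtrellis}, where $(H_2^r,\bar\NN_{\Sigma^r}(K^r))$ for $r\neq 0$ is stated to contain essential annuli yet is shown not to be an $I$-bundle. If your implication were correct, the paper's main construction would fail for every $r\neq 0$. There is also a positional gap: the annulus realizing the free homotopy from $\alpha$ to the cusp torus need not lie in $H_2$; after the surgery on inessential intersections it generally decomposes into a chain of subannuli alternating between the two handlebodies. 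The paper's Lemma~\ref{uniqueelement} handles this by an induction over that chain, peeling off boundary-parallel subannuli one at a time; the mechanism that makes each subannulus boundary-parallel is not acylindricity (which is explicitly \emph{not} assumed) but the fact that a non-boundary-parallel annulus in a handlebody boundary-compresses, producing an essential disk disjoint from $\gamma$ and contradicting condition~(2).
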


Since $\widehat H_1$ is a deformation retract of $H_1$ we can naturally identify
$\pi_1(H_1)$ with $\pi_1(\widehat H_1)$. 

\medskip

Recall that in  a pared manifold $(M,P)$ (where $P\subset \partial M$ is a union of annuli and tori),
an {\it essential annulus} is  a properly embedded $\pi_1$ injective annulus in $(M,\partial M\smallsetminus P)$ 
which is  not properly homotopic into $P$ or into $\partial M\smallsetminus P$.
We say that $(M,P)$ is {\em acylindrical} if it contains no  essential annuli. 

\vskip6pt

We will need the following lemma:

\vskip6pt

\begin{lemma}\label{uniqueelement} 
Let $M$ be a closed $3$-manifold with a Heegaard splitting  $M = H_1 \cup_{\Sigma} H_2$, 
where $\Sigma = \partial H_1 = \partial H_2$. Let  $\gamma \subset \Sigma$ be a simple closed 
curve such that 
the subsurface $\Sigma \smallsetminus \nbhd(\gamma)$ is incompressible 
in $M_\infty$. 
Suppose that $M_\infty = M \smallsetminus \nbhd(\gamma)$ is a
hyperbolic manifold and $\rho:\pi_1(M_\infty) \to PSL_2(\C)$ is
a corresponding holonomy representation.
Then the group generated by the element $\rho([\gamma])$ is up to
conjugacy the unique maximal parabolic subgroup in
$\rho(\pi_1(H_1))$. 
\end{lemma}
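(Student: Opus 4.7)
The plan is to work inside $\pi_1(M_\infty)$, identifying it with $\Gamma := \rho(\pi_1(M_\infty))$ via the faithful holonomy $\rho$ (faithfulness is automatic since $M_\infty$ carries a finite-volume hyperbolic structure with one torus cusp $T = \partial M_\infty$). Let $P \cong \mathbb{Z}^2$ be the maximal parabolic subgroup of $\Gamma$ corresponding to $T$; up to conjugacy it is unique. Since $\gamma$ is freely homotopic in $M_\infty$ to its pushoff onto $T$, we have $[\gamma] \in P$, so $\rho([\gamma])$ is already parabolic.

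The first substantive step is to extract from the incompressibility hypothesis~(2) an amalgamated free product decomposition
\[
\pi_1(M_\infty) \cong \pi_1(\widehat{H_1}) *_{\pi_1(S)} \pi_1(\widehat{H_2}),
\]
where $S := \Sigma \smallsetminus \nbhd(\gamma)$. Both inclusions $\widehat{H_i} \hookrightarrow M_\infty$ are $\pi_1$-injective by a standard compressing-disk surgery using incompressibility of $S$ together with the automatic incompressibility of the torus $T$, after which van Kampen yields the amalgam. In this decomposition $[\gamma]$ lies in the edge group $\pi_1(S)$, while the meridian $\mu$ of the solid torus $\nbhd(\gamma)$ has amalgam cyclic length~$2$: it is a curve on $T$ crossing $\gamma$ exactly once and therefore intersecting $\partial S = T \cap \Sigma$ transversely twice, so its Bass--Serre translation length is~$2$. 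Now if $w \in \pi_1(H_1)$ has $\rho(w)$ parabolic, then $w$ is conjugate in $\pi_1(M_\infty)$ to some $[\gamma]^a \mu^b \in P$; since amalgam cyclic length is conjugation-invariant and any element of $\pi_1(\widehat{H_1})$ has cyclic length at most~$1$ while $[\gamma]^a \mu^b$ has cyclic length $2|b|$, one forces $b = 0$. Thus $w$ is $\pi_1(M_\infty)$-conjugate to $[\gamma]^a \in \pi_1(S) \subset \pi_1(\widehat{H_1})$.

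The remaining step, which I expect to be the main technical obstacle, is to upgrade this $\pi_1(M_\infty)$-conjugacy to a $\pi_1(\widehat{H_1})$-conjugacy. I would realize the free homotopy from $w$ to $[\gamma]^a$ as an embedded annulus in $M_\infty$ (via the annulus theorem), put it in general position with respect to $S$, and remove inessential intersection circles by innermost-disk surgery against the incompressible $S$. The remaining essential intersection circles are parallel in the annulus, so the outermost subannulus on the $w$-side lies entirely in $\widehat{H_1}$ and realizes a free homotopy there between $w$ and a loop on $S$; a parallel argument performed on $S$, using incompressibility again and that $\partial S$ is parallel to $\gamma$, then produces the desired $\pi_1(\widehat{H_1})$-conjugacy of $w$ with a power of $[\gamma]$.
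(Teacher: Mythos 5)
Your overall strategy---replacing the paper's compact-core argument with Bass--Serre theory for the splitting of $\pi_1(M_\infty)$ along the incompressible surface $S=\Sigma\ssm\NN(\gamma)$---is a legitimate alternative route, but as written it has two gaps, one of which is serious. First, the assertion that $\mu$ has translation length exactly $2$ does not follow from the fact that it meets $\partial S$ transversely twice: geometric intersection number only bounds the translation length from above. To get the lower bound you must show that the spanning arcs of the two annuli $\partial M_\infty\intersect\widehat H_i$ are not homotopic rel endpoints into $S$; if one were, boundary-compressing that annulus would yield a compressing disk for $S$ in the corresponding handlebody (its boundary is a band sum of the two components of $\partial S$, essential unless $S$ is an annulus). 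This is fixable---it is essentially the point the paper handles with its ``$\gamma'$ is primitive in $\widehat H_k$'' argument---but your stated justification is a non sequitur, and it is precisely here that the incompressibility hypothesis has to do real work.

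The serious gap is your third step, which you yourself flag as the main obstacle and then do not carry out. The annulus theorem does not apply: $w$ is an arbitrary element of the free group $\pi_1(\widehat H_1)$, not a simple closed curve on $\partial M_\infty$, so there is no embedded annulus to put in general position. (This is exactly why the paper first invokes the relative compact core theorem together with McCullough--Miller--Swarup: to realize every parabolic conjugacy class of $\rho(\pi_1(H_1))$ by an embedded curve $\beta_i$ on $\Sigma$ before any annulus argument begins.) Even granting a singular annulus, your outermost-subannulus step only shows that $w$ is conjugate in $\pi_1(\widehat H_1)$ into $\pi_1(S)$; the loop in $S$ you land on need not be a power of $[\gamma]$ there, and the ``parallel argument performed on $S$'' does not engage with the actual difficulty, namely controlling the chain of intermediate essential subannuli alternating between $\widehat H_1$ and $\widehat H_2$. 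That control is the heart of the matter; the paper obtains it from the characteristic submanifold theory together with the fact that a non-boundary-parallel incompressible annulus in a handlebody is boundary-compressible. Until that step is supplied you have only established uniqueness of the maximal parabolic up to conjugacy in $\pi_1(M_\infty)$, not in $\rho(\pi_1(H_1))$, which is what the lemma (and its application to the cusps of $N=\Hyp^3/\rho(\pi_1(H_1))$) requires.
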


Perhaps surprisingly, we are {\em not} assuming that $\Sigma\ssm \NN(\gamma)$ is
acylindrical in either $H_1$ or $H_2$. Instead, the compressibility of
the handlebodies plays an important role. 

\begin{proof} 
The relative core theorem (Theorem \ref{relative core}), applied to the manifold $N=\Hyp^3/\rho(\pi_1(H_1))$, gives 
a compact  core $C_1\subset N$ such that the parabolic conjugacy classes of $\rho(\pi_1(H_1))$ are represented 
by a system  of disjoint closed curves on $\boundary C_1$. The lift to $N$ of $H_1$ is also a compact core, hence 
it is  homeomorphic to $C_1$, by a map which induces $\rho$ on $\pi_1(H_1)$ (Theorems 1 and 2 of 
McCullough-Miller-Swarup \cite{mccullough-miller-swarup}). It follows that  the generators of parabolic 
subgroups of $\rho|_{\pi_1(H_1)}$ are also represented by a disjoint collection of simple closed loops 
$\beta_1,\ldots,\beta_k$ on $\Sigma = \boundary H_1$, where $\gamma =\beta_1$. 

Choose some  $\beta=\beta_i$.  Since the only parabolic conjugacy  classes in
$\pi_1(M_\infty)$ are the elements in  $\pi_1(\partial M_\infty)$, there is a singular annulus   
$\alpha : S^1 \times [0, 1] \to M_\infty$  which  on $a_0 \equiv S^1 \times \{0\}$ is a parametrization 
of $\beta$,  and maps $a_1 \equiv S^1 \times \{1\}$ to $\boundary M_\infty$.

Perturb $\alpha$, if need be, so that $\alpha \pitchfork \Sigma\ssm\NN(\gamma)$, and choose it to minimize 
the number of components of $\alpha^{-1}(\Sigma)$. Now using the fact that $\Sigma\ssm\NN(\gamma)$ 
is incompressible in $M_\infty$, we may assume that all components of $\alpha^{-1}(\Sigma)$ are  essential 
simple closed curves in $S^1 \times [0,1 ]$, or arcs in $S^1 \times [0, 1]$ with both end points on $a_1$. 
We will next show that such arcs do not occur.

Let  $\delta \subset S^1 \times [0,1 ]$ be an outermost arc of $\alpha^{-1}(\Sigma)$. The points 
$\partial \delta$ bound an arc $\delta'\subset a_1$ such that $\delta \cup\delta'$ bounds a disk 
$\Delta \subset S^1 \times [0,1 ]$. The image $\alpha(\Delta)$ is  contained in exactly one of 
either $H_1$  or $H_2$, call it $H_k$. 

Let $B$ be the annulus $\boundary M_\infty\intersect H_k$. The arc
$\delta'$ is properly embedded in $B$. If its endpoints are on the
same component of $\boundary B$, we could homotope $\delta'$  rel
endpoints to $\boundary B$. This allows us deform $\alpha$ in a
neighborhood of $\delta'$, so that $\alpha^{-1}(\Sigma)$ in a
neighborhood of $\Delta$ becomes a closed loop. 
This loop can be removed, again because $\Sigma\ssm\NN(\gamma)$ is
incompressible, contradicting the choice of $\alpha$. 

If the endpoints of $\delta'$ are on different components of
$\boundary B$, let $\gamma'$ be a core of $B$. Then the singular disk
$\alpha(\Delta)$, after a homotopy near $\delta'$,  intersects
$\gamma'$ in a single point. In other words, $\gamma'$ is primitive in
$ \widehat H_k = H_k \ssm \NN(\gamma)$, i.e.  it is isotopic to the core of a 1-handle.
That would imply  that the surface  $\Sigma \ssm \NN(\gamma)$ is compressible
in $H_k$, in contradiction to assumption $(2)$. We conclude that
$\alpha^{-1}(\Sigma)$ contains no arc components. 

The essential simple closed curves in $\alpha^{-1}(\Sigma)$
partition $S^1 \times [0,1 ]$ into sub-annuli, each of which maps into
either $H_1$ or $H_2$. Order the annuli as $A_0,A_1,\ldots,A_m$ where
$A_0$ is adjacent to $a_0$. Now, $\alpha|_{A_0}$ is a singular annulus in a
pared manifold $(H_k,\boundary H_k\intersect \NN(\gamma))$ whose
boundary is incompressible. Applying the characteristic submanifold
theory (see Bonahon \cite{bonahon:handbook} for a survey, and for
proofs see  Jaco \cite{jaco:lectures}, Jaco-Shalen \cite{jaco-shalen} and 
Johannson \cite{johannson:JSJ}) $\alpha|_{A_0}$ is homotopic through proper
annuli to a vertical annulus in one of the fibered or $I$-bundle
pieces, and in particular since one boundary $\alpha|_{a_0}$ is
already embedded, after this homotopy we can arrange for $A_0$ to
embed. Now we can proceed by induction until the whole annulus
$\alpha$ is an embedding. In particular the last subannulus $A_m$ has
one boundary parallel to $\gamma$, and the other boundary disjoint
from $\gamma$. If $A_m$ is not boundary parallel in the handlebody
$H_l$ that contains it, then it is boundary compressible (here we are
using the fact that $H_l$ is a handlebody). After
boundary compressing we obtain an essential disk in $H_l$ disjoint
from $\gamma$, contradicting the incompressibility of
$\Sigma\ssm\NN(\gamma)$. 

Hence the last annulus is boundary parallel and can be removed. Proceeding by induction 
we conclude that $\beta$ is isotopic to $\gamma$ in $\boundary H_1$. 

\end{proof}

\begin{proof}[Proof of Proposition \ref{mainproposition}]
Consider the manifold $N = \mathbb{H}^3/ \rho(\pi_1(H_1))$, which is the cover of 
$M_\infty$ corresponding  to  $\rho(\pi_1(H_1))$.

After an isotopy in $M_\infty$ we can arrange that $\gamma$ lies on
the boundary of the cusp tube, and $H_1$ meets the tube only in the annulus
$C(\gamma)=\NN(\gamma)\intersect \boundary H_1$. Lifting this embedding to $N$
yields an embedding $i:H_1\to N^0= N \ssm Q$, which takes $C(\gamma)$ to the boundary of a
cusp tube $Q$. By Lemma \ref{uniqueelement}, $Q$ is the unique cusp of $N$, and it 
follows from this   that $i(H_1)$ is a relative compact core for $N^0$ (see Section \ref{background}).  
In particular,  the ends of  $N^0$ are in one-to-one correspondence with the components 
of  $\partial H_1 \smallsetminus \gamma$.

Moreover, the Tameness Theorem of Bonahon \cite{bonahon} tells us that each end of $N^0$ is 
either {\em geometrically finite} or {\em simply degenerate}. We wish to rule out the latter.

Let $W$ be a component of $\partial H_1 \smallsetminus \nbhd(\gamma)$. If the end $E_W$ of  
$N\smallsetminus Q$ associated to $W$ is simply degenerate, then  Thurston's covering theorem 
(see \cite{canary:covering} and \cite{wpt:notes}) implies that the covering map $\varphi:N\to M_\infty$ is 
virtually an infinite-cyclic  cover of a manifold that fibers over the circle. That is, there are finite covers 
$p:\widehat N\to N$ and  $q:\widehat M_\infty \to M_\infty$, such that $\widehat M_\infty$ fibers 
over the circle and in the commutative diagram 

$$ \xymatrix{\widehat N \ar[d]^p\ar[r]^\zeta& \widehat M_\infty \ar[d]^q\\ N  \ar[r]^\varphi & M_\infty} $$ 

\vskip5pt

\noindent the map $\zeta$ is the associated infinite cyclic cover. In
particular, $\widehat N$  minus its parabolic  cusps is a product $\widehat W
\times \R$ where $\partial\widehat W \times \R$ is the preimage of the cusp boundary  of $N$.

The core $i(H_1)$ of $N$ lifts to a core $\widehat H_1\subset \widehat N$, which must be of the form 
$\widehat W \times [0,1]$ up to isotopy. 

\begin{lemma}\label{I bundle covers}
If $F$ is an orientable compact surface (not a sphere or a disk) and 
$$p:(F\times[0,1],\boundary F\times[0,1])\to (B,P) $$
is a covering of pairs. Then $B$ is an $I$-bundle, and the
cover is standard, i.e. respects the $I$-bundle structures. 
\end{lemma}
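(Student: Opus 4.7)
\bigskip

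\noindent\emph{Proof plan.} My plan is to exploit the rigidity of the $I$-bundle structure on $F\times[0,1]$: after an equivariant isotopy, the deck group of $p$ should preserve the product $I$-fibration $F\times[0,1]\to F$, and passing to the quotient will endow $B$ with the desired $I$-bundle structure.

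First, since $F\times[0,1]$ is compact, the covering $p$ is finite. Passing to its regular closure if necessary, I would assume $B=(F\times[0,1])/G$ for $G$ a finite group acting freely. Because $p$ is a covering of pairs, each $g\in G$ preserves the partition of $\boundary(F\times[0,1])$ into horizontal boundary $F\times\{0,1\}$ and vertical boundary $\boundary F\times[0,1]$.

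The main step is arranging, up to equivariant isotopy, that every $g\in G$ is fiber-preserving with respect to $F\times[0,1]\to F$. When $F$ is not an annulus or a torus, the characteristic submanifold theorem of Jaco--Shalen and Johannson asserts that the $I$-bundle structure on the pair $(F\times[0,1],\boundary F\times[0,1])$ is unique up to isotopy, so each individual $g$ is isotopic through pair-preserving homeomorphisms to a fiber-preserving map; equivariant characteristic submanifold theory in the spirit of Meeks--Scott then upgrades this to a $G$-invariant $I$-fibration. The sporadic cases in which $F$ is a torus or an annulus, where $F\times[0,1]$ also carries a Seifert fibration, should be treated by direct enumeration of the possible finite free actions respecting the boundary pattern.

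Once the $G$-action preserves an $I$-fibration, it descends to an action on $F$ that must be free: if $g(x)=x$ for some $x\in F$, then $g$ preserves the fiber $\{x\}\times[0,1]$ and acts on it without fixed points, which is impossible since every self-homeomorphism of a closed interval has a fixed point. Consequently $\bar F=F/G$ is a compact surface and the quotient map $B\to\bar F$ is an $I$-bundle, with $p$ fiber-preserving by construction---exactly the statement that the covering of $I$-bundles is standard. The main obstacle is carrying out the equivariant isotopy: for each individual $g$ it follows from uniqueness of the $I$-bundle structure, but synchronizing the isotopies across the whole $G$-action requires the equivariant version of the characteristic submanifold theorem.
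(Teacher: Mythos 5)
Your strategy is genuinely different from the paper's, which never touches equivariant fibration theory. The paper looks at the images $D_0=p(F\times\{0\})$ and $D_1=p(F\times\{1\})$ in $\partial B\smallsetminus P$. If they are distinct, every loop in $B$ based in $D_0$ lifts to an arc with both endpoints on $F\times\{0\}$, hence retracts into $F\times\{0\}$, so $\pi_1(D_0)\to\pi_1(B)$ is surjective (and injective, being covered by the incompressible surface $F\times\{0\}$); Waldhausen's theorem (Theorem 10.2 of Hempel) then identifies $B$ as a product, and standardness of $p$ follows. If $D_0=D_1$, the index-two subgroup of loops whose lifts join a horizontal boundary component to itself yields a double cover $\widehat B\to B$ to which the first case applies, and Theorem 10.3 of Hempel identifies $r$ as the standard product-over-twisted covering. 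This is short and rests on two classical product-recognition theorems rather than on uniqueness of $I$-fibrations.

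As written, your argument has two genuine gaps. First, ``passing to the regular closure'' does not let you assume $B=(F\times[0,1])/G$: the normal core produces a further cover $\widetilde F\times[0,1]\to F\times[0,1]\to B$ in which only the composite is regular, so you would have to prove the statement for the regular cover and then separately deduce standardness of the intermediate, possibly irregular, cover $p$ --- which again needs uniqueness of the $I$-bundle structure upstairs. Second, and more seriously, the step you yourself flag as the main obstacle --- upgrading ``each $g$ is individually isotopic to a fiber-preserving map'' to a single $G$-invariant $I$-fibration --- is not available off-the-shelf from the sources you cite: when $\partial F\neq\emptyset$ the manifold $F\times[0,1]$ is a handlebody with compressible boundary, so the characteristic submanifold theory applies only in Johannson's boundary-pattern formulation, and the Meeks--Scott equivariant results are stated for Seifert fibered and closed geometric or Haken manifolds. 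A statement of the needed form is true, but it is essentially the content of the lemma and cannot simply be cited in this generality; likewise the annulus and torus cases are deferred to an enumeration you do not carry out. Your final step (freeness of the induced action on $F$, via the fixed point of any self-homeomorphism of $[0,1]$) is correct. In short, the skeleton is sound but the load-bearing step is asserted rather than proved, whereas the paper's route through Waldhausen's theorem is complete and elementary.
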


\begin{proof}
Suppose that $D_0 = p(F\times\{0\})$ and $D_1 = p(F\times\{1\})$ are distinct components 
of $\boundary B \ssm P$. Then any loop $\alpha$ in $B$ based at $D_0$ lifts to an arc with both 
endpoints on $F\times\{0\}$. This arc retracts to $F\times\{0\}$, so downstairs $\alpha$ retracts 
to $D_0$. We  conclude that $D_0 \to B$ is surjective on $\pi_1$. It is also
injective, since it is the inclusion of an incompressible surface
followed by a covering map. 
By a theorem of Waldhausen (see for example Theorem 10.2 of \cite{hempel})
$B$ is a product $G\times[0,1]$. Now it is clear that $p$ is standard,  
in particular it induced by a cover $F\to G$. 

If $D_0=D_1$, consider the subgroup $H < \pi_1(B)$ corresponding to loops based at $x\in D_0$ 
that lift to arcs whose endpoints are on the same boundary component of $F\times[0,1]$. This index 
$2$ subgroup corresponds to a degree $2$ cover $r:(\widehat B,\widehat P) \to (B,P)$, in
which $D_0$ lifts to two homeomorphic copies since $\pi_1(D_0)< H$. Now $p$ factors through a 
cover $\widehat p:F\times[0,1]\to \widehat B$ which by the previous paragraph is standard. In particular 
$\widehat B$ is a product. It now follows from Theorem 10.3 of Hempel \cite{hempel} that  $r$ is exactly 
the standard  covering from a product $I$-bundle to a twisted $I$-bundle. 

\end{proof}

Applying this lemma to the covering $\widehat H_1 \to i(H_1)$, we see that $(H_1,C(\gamma))$ is a 
(possibly twisted) $I$-bundle  where $C(\gamma)$ is the sub-bundle over the boundary of the base
surface. We claim now that the complement $M_\infty\smallsetminus H_1$ is also an $I$-bundle

Since $\widehat M_\infty$ fibers over the circle and $\widehat W$ maps to a fibre,  $\widehat H_1$ 
embeds in  $\widehat M_\infty$, and its complement is also a product $I$-bundle, $X\times I$. Consider
any component $\widetilde{H_1}\ne \widehat H_1$ of the preimage $q^{-1}(H_1)$. Since it covers $H_1$, 
it too is an $I$-bundle. Its base surface $Y$ is an embedded incompressible surface in $X\times I$, with
$\boundary Y$ in the vertical annuli $\boundary X\times I$  (since $q$ is a covering of pared manifolds). 
By Lemma 5.3 of Waldhausen \cite{waldhausen}, this implies that $Y$ is isotopic to $X \times\{1/2\}$, and 
hence $\widetilde{H_1}$ is isotopic to  $X \times I'$ where $I'\subset int(I)$.  This means that  $q^{-1}(H_1)$ 
is a union of standardly embedded untwisted $I$-bundles in  $\widehat M_\infty$, and hence so is its
complement. We conclude (again by Lemma \ref{I bundle covers}) that the complement of $H_1$ in $M_\infty$ 
is also an $I$-bundle, properly embedded as a pared manifolds.

This contradicts condition (3) of the theorem,  and so concludes the proof that no end
of $N\smallsetminus Q$ is simply degenerate. 

Hence $\rho$ is geometrically finite with one cusp,  which satisfies the cut-point-free condition 
(by hypothesis (1)). Theorem \ref{blocking PS} implies that $\rho$ is primitive stable.  

\end{proof}

\section{Links carried by a Trellis}\label{KCT}

Links carried by a trellis were first defined in ~\cite{lustig-moriah:highgenus}.
We reproduce the definition here for   the convenience of the reader,
but we also make some changes in notation.

A {\em trellis} is a connected graph $\mathcal{T}$ in a vertical coordinate plane
$P  \subset \mathbb{R}^3$ which consists of horizontal and  vertical
edges only, and whose vertices have valence 2 or 3 and are of the type
pictured in Figure \ref{vertextypes}.  

\def\epsfbox#1{{BLANK BLANK}}

\begin{figure}[ht]
\centering
\includegraphics[width=3.4in]{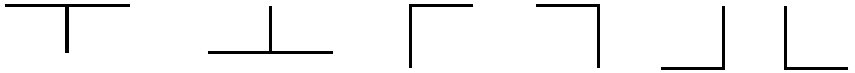}
\caption{Allowable vertex types in a trellis}
\label{vertextypes}
\end{figure}

Given a labeling of the vertical edges by integers, we can describe a
knot or link  on the boundary of a regular neighborhood of
$\mathcal T$, by giving a standard picture for the neighborhood of
each vertex and edge. This is done in Figure \ref{plumbing}. Note that
one of each combinatorial type of vertex is pictured, the rest being
obtained by reflection in the coordinate planes orthogonal to $P$. The
integer label for a vertical edge counts the number of (oriented)
half-twists. The pieces fit together in the obvious way. 
In the discussion to follow we will consistently use
{\em right/left} and {\em top/bottom} for the horizontal and vertical directions
in $P$, which is parallel to the page, and {\em front/back} for the
directions transverse to $P$  and closer/farther from the reader, respectively.
In particular $P$ cuts the regular neighborhood of $\TT$ into a a
front and a back part. 

If $a$ is the function assigning to each vertical edge $e$ its label
$a(e)$, we denote by $K[a]$ the knot or link obtained as above. We say
that $K[a]$ is {\em carried by} $\mathcal T$. 

Since $\TT$  is planar and connected, its regular neighborhood in $\mathbb{R}^3$ is a handlebody 
$H_1 = \overline{\nbhd(\mathcal{T})}$  embedded in the standard way in $S^3$, which we identify with 
the one-point compactification of $\mathbb{R}^3$. The complement  $H_2 = \overline{S^3 - \nbhd(\mathcal{T})}$  
of $int(H_1)$ is also a handlebody. The pair  $(H_1,H_2)$ is a Heegaard splitting of $S^3$, which we call a  
{\em Heegaard splitting of the pair  $(S^3, K[a])$}, or a {\it trellis Heegaard splitting}. We refer to $H_1$ as 
the {\it  inner} handlebody  and to $H_2$ as the {\it outer} handlebody of this splitting. We denote the surface 
which is their common boundary    by $\Sigma$.   Let $g(\TT)$ denote the genus of $H_1$ and $H_2$.

\vskip10pt

\begin{figure}[ht]
\centering
\includegraphics[width=3.4in]{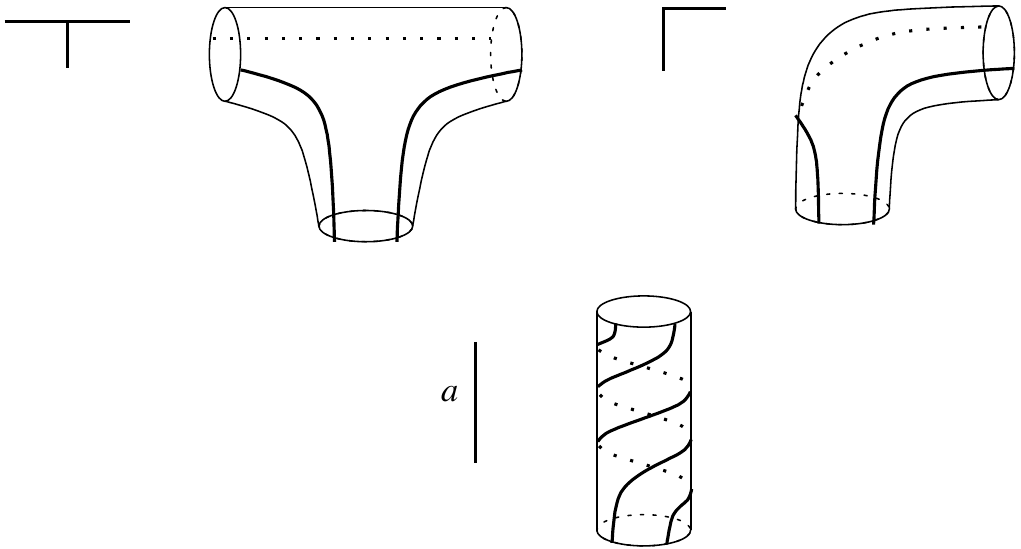}
\caption{Local types for the link carried by a trellis.
The vertical tube carries
  $a\in{\mathbb Z}$ half-twists ($a=3$ is pictured). }
\label{plumbing}
\end{figure}

\subsection {Nice flypeable trellises}

Every maximal connected union of horizontal edges of $\mathcal{T}$ is
called a {\it horizontal line}.  A trapezoidal region bounded by two
horizontal lines and containing only vertical edges in its interior is
called a {\it horizontal layer}. 

A trellis is {\it brick like of type $(b, c)$} if it is a union of $b$ layers each containing $c$ squares
arranged in such a way so that:

\begin{enumerate}

\item Vertical edges incident to a horizontal line (except the top and bottom lines) point alternately up and down. 

\item Layers are alternately ``left protruding'' and ``right protruding", where by left protruding we mean that the leftmost
vertical edge is to the left of the leftmost vertical edges in the layers both above and below it. The definition for right 
protruding and for the top and bottom layers is done in the obvious way.

\end{enumerate} 

\begin{figure}[ht]
\centering
\includegraphics[width=2.5in]{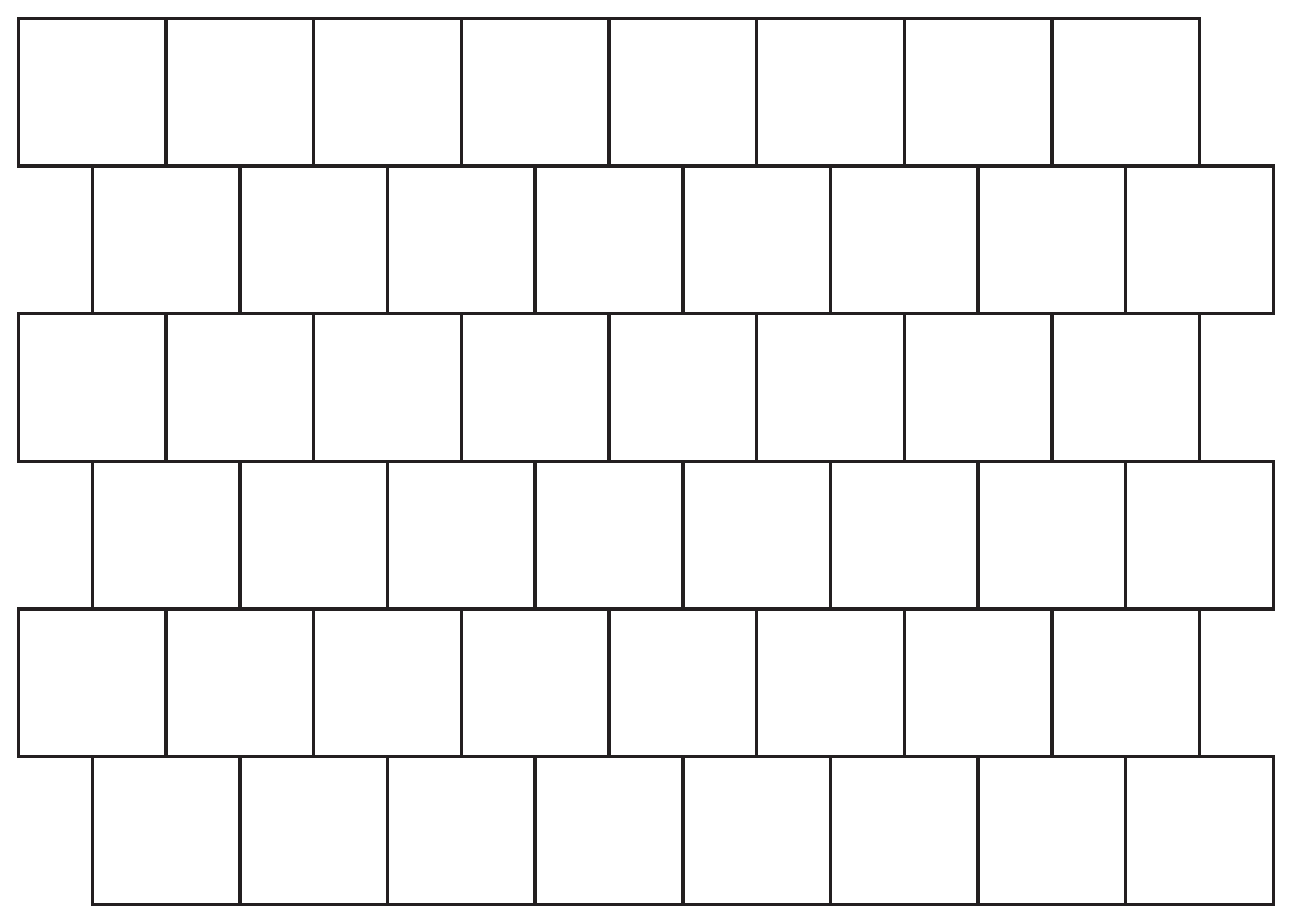}
\caption{A brick-like trellis}
\label{bricklike}
\end{figure}

A trellis is {\it flypeable} (see Figure \ref{flypeable}) if it is obtained from a brick like trellis in the following way:
Choose $ 1 < i < b$, and in the $i-th$ layer choose a contiguous sequence of squares $D_1, \dots D_r$ 
not including the leftmost or rightmost square. Now remove all vertical edges incident to the squares from the 
layers above and below. See Figure \ref{flypeable} for an example. 

A trellis is {\it nice flypeable} if $ 2 < i < b - 1$ and the squares $D_1, \dots D_r$ do 
not include the two leftmost or the two rightmost squares.

\begin{figure}[ht]
\centering
\includegraphics[width=2.5in]{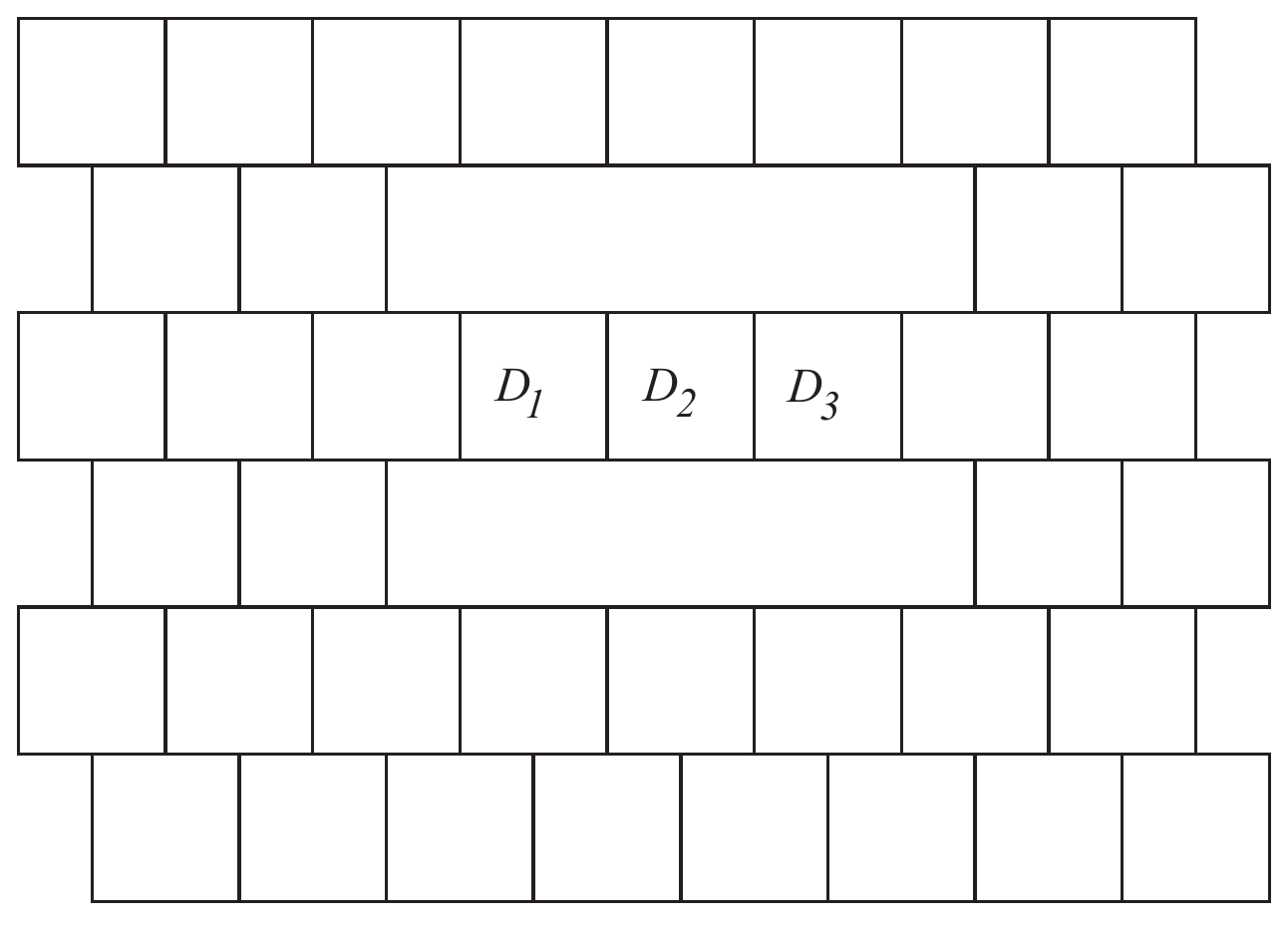}
\caption{A nice flypeable trellis}
\label{flypeable}
\end{figure}

Given a flypeable trellis carrying a knot or link $K$,  a {\it flype} is an ambient isotopy of $K$ which is obtained as follows: 
Let $R $ be the union of the squares $D_1, \dots D_r$ including their interiors. We call $R$ the {\it flype rectangle}. 
Let $B$ be a regular neighborhood of $R$.  We choose $B$ so that it contains all  subarcs of $K$ winding around 
the edges of $R$ except for  the horizontal arcs of $K$ that travel in the back of $\Sigma$ along the horizontal 
edges of $R$. Hence $\boundary B$ intersects $K$ in four points (see Figure \ref{flypebox}.) 

\begin{figure}[ht]
\centering
\includegraphics[width=3in]{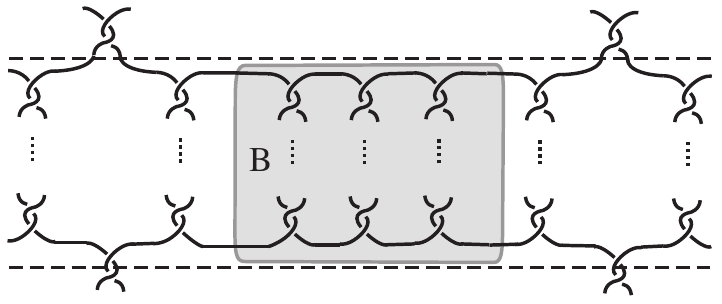}
\caption{The box $B$ contains a portion of the knot associated to adjacent vertical
  edges, but excludes the horizontal arcs passing in the back (dotted).}
\label{flypebox}
\end{figure}

\vskip10pt

A flype will flip the box $B$ by 180 degrees about a horizontal axis leaving all parts of the knot outside a small 
neighborhood of $B$ fixed.  This operation changes the projection of $K$ in $P$ by adding a crossing on the left and 
a crossing on the right side of the box. These crossings have opposite signs.

\begin{figure}[ht]
\centering
\includegraphics[width=4.5in]{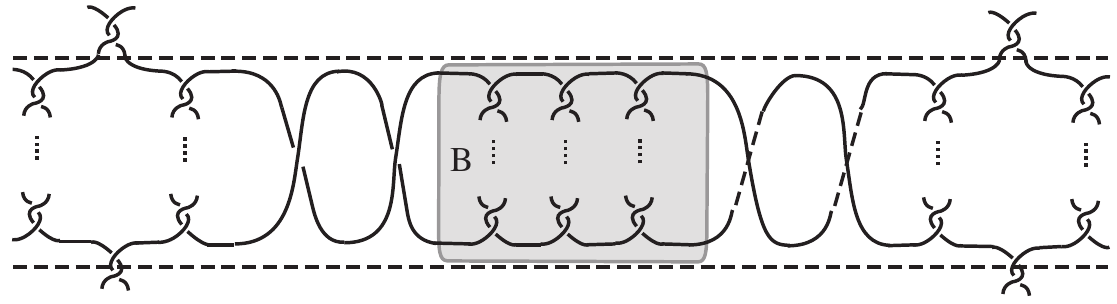}
\caption{Local picture of $K^{2}$, a double-flype move on $K$. Note  that the  genus of the new trellis 
which carries $K^{2}$ is bigger by 4 than the genus of the trellis which carried $K$.}
 \label{afterflype}
\end{figure}

The projection of $K$ obtained after a flype is carried by a new trellis. It differs
from $\mathcal{T}$   in that there is a new vertical edge on the left side of $R$ and
another new one on the right side of $R$, one with twist coefficient $1$
and the other one with $-1$.  The  flype will  be called  {\it positive} if the
coefficient of the left  new edge is positive. A  positive / negative flype
iterated $|\, r\, |$ times  will be called a $r${\it -flype}, $r \in \mathbb{Z}$, (see Figure
\ref{afterflype}). Denote the image of $K$ after the  $r$-flype by $K^{r}$ and the new
trellis with the new $2|r|$ vertical edges by $\TT^{r}$. Similarly we will denote $\nbhd(\TT^{r})$ by
$H_1^{r}$ and  $\partial H_1^{r}$ by $\Sigma^{r}$. Notice that $g(\mathcal{T}^{r}) = g(\mathcal{T}) + 2|r|$. 

\vskip10pt

The following restatement of Theorem 3.4
of~\cite{lustig-moriah:highgenus} describes the embedding of $K^r$ in
$\Sigma^r$ under suitable assumptions:

\vskip10pt

\begin{theorem}\label{main-incomp}
Let $\mathcal{T}$ be a flypeable trellis and let $K = K[a] \subset S^3$ be a  knot or link carried  by 
$\mathcal{T}$ with twist coefficients given by $a$.  Assume that $a(f) \geq 3$ for all vertical edges 
$f$ and that for the two vertical edges $e, e'$ immediately to the left and right of the flype rectangle we have
 $a(e), a(e') \geq 4$. Then for all $r \in\mathbb{Z }$, the surface $\Sigma^{r} \ssm K^{r}[a]$ is incompressible in both the
interior and the exterior handlebodies $H_1^{r}, H_2^{r}$.

\end{theorem}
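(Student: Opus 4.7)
My plan is to argue by contradiction, supposing that $\Sigma^r\ssm K^r[a]$ admits a compressing disk $D$ in one of $H_1^r$ or $H_2^r$, and then use the large twist coefficients to rule this out by a standard intersection analysis against a well-chosen meridian system. The first step is to set up, for each handlebody, a complete system of meridian disks $\mathcal M$ arising naturally from the planar trellis: for $H_1^r=\overline{\nbhd(\TT^r)}$ one takes one compressing disk transverse to each edge of $\TT^r$ outside a spanning tree of the underlying graph, together with one disk for each new vertical edge introduced by the flype; for $H_2^r$ one dualizes and uses disks corresponding to the complementary regions of $\TT^r$ in the plane $P$ and their completions through the point at infinity. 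Then I would record, from the local pictures of Figure \ref{plumbing}, the combinatorics of $K^r\cap M$ for each $M\in\mathcal M$: a meridian crossing a vertical edge labeled $a$ meets $K^r$ in roughly $|a|$ points arranged consecutively around $\partial M$, while a meridian crossing a horizontal edge, or a dual meridian, meets $K^r$ in a bounded number of points determined purely by the local vertex type.

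Next I would isotope $D$ rel $\partial$ to minimize $|D\cap\mathcal M|$ and, if the intersection is nonempty, pick an outermost arc $\alpha$ of $D\cap\mathcal M$ on $D$. This outermost disk has boundary consisting of $\alpha$, lying on some meridian $M$, together with an arc $\beta$ on $\Sigma^r\ssm K^r$. Because $\alpha$ is disjoint from $K^r$, it separates the points of $M\cap K^r$ into two groups, and the existence of the outermost disk forces one of these groups to be ``absorbable'' by the adjacent faces of $\Sigma^r\ssm K^r$, giving an isotopy that reduces $|D\cap\mathcal M|$ — unless the two groups have a specific small size dictated by the combinatorics at the vertex or edge through which $M$ passes. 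The key numerical input is exactly the hypothesis $a(f)\ge 3$: this guarantees that the arcs of $K^r$ on either side of a vertical meridian wind enough times that no outermost disk of this type can exist without forcing $\alpha$ to be parallel to a component of $M\cap K^r$, contradicting minimality. The sharpened hypothesis $a(e),a(e')\ge 4$ on the edges bordering the flype rectangle is exactly what I would use to cover the additional meridians that appear in $\TT^r$ after the flype: the two new vertical edges carry only $\pm 1$ half-twists, but because they are adjacent to edges with $\ge 4$ half-twists, any outermost arc landing on one of the new meridians is forced to use the neighboring high-twist regions, where the same counting argument applies.

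Finally, once all intersections have been removed, $\partial D$ must lie in a single component of $\Sigma^r\ssm(K^r\cup\mathcal M)$. Here I would verify directly, again using Figure \ref{plumbing} together with the flype picture of Figure \ref{afterflype}, that every such complementary face is a disk whose boundary on $\Sigma^r$ is already inessential in $\Sigma^r\ssm K^r$, so $\partial D$ cannot be essential — contradiction. The main obstacle, and what I expect to take up most of the work, is the second step: the case analysis at outermost arcs is intrinsically local but must be carried out separately for the several vertex types of Figure \ref{vertextypes} and again for the two new vertex types created by a flype, and one must keep careful track of both handlebodies simultaneously. This is essentially the argument carried out in Theorem 3.4 of Lustig--Moriah \cite{lustig-moriah:highgenus}, and I would import their case analysis; the only additional point to check in the present setting is that iterating the flype $|r|$ times introduces no new local configurations beyond those already treated for $r=\pm1$, because an $r$-flype is built from $|r|$ ordinary flypes stacked in the same box and the local combinatorics at the new edges is the same at each level.
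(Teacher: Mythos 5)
There is nothing in this paper to compare your argument against: Theorem \ref{main-incomp} is stated explicitly as a restatement of Theorem 3.4 of Lustig--Moriah \cite{lustig-moriah:highgenus} and is imported without proof. Since your proposal also ends by deferring the entire case analysis to that same reference, in substance your ``proof'' and the paper's coincide -- both are the citation. Your surrounding outline (complete meridian systems adapted to the trellis, minimizing $|D\cap\mathcal M|$, outermost-arc analysis using the twist coefficients, and checking that the complementary faces are inessential disks) is a fair reconstruction of the standard strategy, and it is consistent with the combinatorial data the present paper does use from that reference (Lemma 2.2 of \cite{lustig-moriah:highgenus}, invoked in the proof of Proposition \ref{boundaryparallelannuli}). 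One factual slip worth correcting: for the inner handlebody $H_1^r$ a meridian disk dual to a vertical edge meets $K^r$ in exactly two points regardless of the label $a$ (the paper itself notes this in Section \ref{SCPS} when showing the Whitehead graph is a circle); the label $a(e)$ controls the number of intersection points only with the disks of the \emph{outer} handlebody $H_2^r$, i.e.\ the complementary regions of $\TT^r$ in $P$. So the counting argument you describe applies as stated on the $H_2^r$ side, while on the $H_1^r$ side the relevant mechanism is the connectivity/cut-vertex structure of the Whitehead graph rather than a large intersection number. As a self-contained proof your proposal has a genuine gap -- the case analysis is the whole content and you outsource it -- but relative to what the paper provides for this statement, you have done no less.
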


  \vskip10pt

\subsection{Horizontal surgery on knots carried by a trellis}\label{HSKT}

In our construction we will need $K$ to be a knot. The number of components of $K$ is determined by the residues $a(e) \mod 2$, and it is easy to see that if $K$ has more than one component then the number can be reduced  by changing $a(e) \mod 2$ at a column where two components meet. Hence a given trellis always carries knots with arbitrarily high coefficients. We will assume from now on that $K$ is a knot.

The embedding of $K$ in $\Sigma$ defines a framing, in which the longitude $\lambda_\Sigma$ is a boundary component of a regular neighborhood of $K$ in $\Sigma$. We let $K_\Sigma(p/q)$ denote the result of $p/q$ surgery on $S^3\ssm K$ with respect to this framing. 

In particular $K_\Sigma(1/m)$, for $m\in\Z$, will be called  a {\em horizontal } Dehn surgery on $K$ with respect to $\Sigma$. Note that  it has the same effect as cutting $S^3$ open along $\Sigma$ and regluing by the $m^{th}$ power of a Dehn twist on $K$.

It is interesting to note that a flype does not change this framing, i.e.
$$K_{\Sigma}(p/q) = K^r_{\Sigma^r}(p/q)$$
for all $p/q$ (see \cite{lustig-moriah:highgenus}). This is because the effects on the framing from the new 
crossings on both sides of the flypebox cancel each other out. We will
not, however, need this fact in 
our construction.

\section{Satisfying conditions for primitive stability}\label{SCPS}

\vskip10pt

In this section we will consider representations for manifolds obtained from diagrams of flyped knots on
a nice flypeable trellis. We show that they are hyperbolic and that they satisfy the hypotheses required 
 by Proposition \ref{mainproposition}.

\subsection{Whitehead graph}

Fix  $r \in\mathbb{Z}$. Let $\{e_1,\dots , e_{n_r}\}$ denote the set of vertical edges of $\TT^r$ not including the 
rightmost one in each layer. Each $e_i$ is dual to a disk $\Delta_i$ in $H_1^r$, and note that these disks cut 
$H^r_1$ into a 3-ball, hence $n_r = g(H^r_1)$. Let $x_i$ be the generator of $\pi_1(H^r_1) = F_{n_r}$ which is 
dual to $\Delta_i$ and let $X = \{x_1,\ldots,x_{n_r}\}$. The curve $K^{r}$ contains no arc that meets a disk 
$\Delta_i$ from the same side at each endpoint without meeting other $\Delta_j$ in its interior, and it follows 
that $K^r$  determines a cyclically reduced word $[K^{r}]$  in the generators   $X$.

\vskip10pt

\begin{lemma} \label{WHG}The Whitehead graph  $Wh([K^{r}], X)$ is cut point free for each 
$r \in\mathbb{Z}$.

\end{lemma}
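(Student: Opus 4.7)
The plan is to realize $Wh([K^r],X)$ concretely as the arc graph of $K^r \cap H_1^r$ obtained by cutting $H_1^r$ open along the disks $\Delta_1,\ldots,\Delta_{n_r}$. Since those disks cut $H_1^r$ into a 3-ball $B$, each arc of $K^r \cap H_1^r$ has its two endpoints on the copies $\Delta_i^{\pm} \subset \partial B$ of the cut disks and is recorded in $Wh([K^r],X)$ as an edge joining the corresponding labels $x_i^{\pm}$. In this picture, cut-point freeness of $Wh([K^r],X)$ becomes the statement that this arc graph is connected and has no cut vertex.

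The first task is to catalogue the arcs using the trellis picture. By the definition of a carried knot (Figure~\ref{plumbing}), $K^r$ follows $\TT^r$ in the standard way, and the arcs produced by cutting at the $\Delta_i$ fall into a small number of local types determined purely by the trellis combinatorics: arcs traversing a trivalent vertex and connecting $x_i^{\pm}$ to a neighboring $x_j^{\pm}$; arcs running along a horizontal tube between two trivalent vertices, again connecting adjacent generators; and arcs in and near the flype box involving the two new generators introduced by the flype. The specific values of $a(e_i)$ affect the word $[K^r]$ itself but not the local connection pattern in $H_1^r$, so $Wh([K^r],X)$ is determined by the combinatorics of $\TT^r$ alone.

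The second step is to read off cut-point freeness from this description. For each horizontal line $L$ of $\TT^r$, the arcs passing through the trivalent vertices along $L$ and through the endpoints of the vertical tubes meeting $L$ form a closed subgraph of $Wh([K^r],X)$ that visits $x_i^\epsilon$ for every vertical edge $e_i$ incident to $L$, with $\epsilon$ determined by which side of $\Delta_i$ lies closer to $L$. Because every internal horizontal line has vertical edges meeting it from both sides, and the top and bottom trellis lines each support a full layer, the union of these subgraphs over all $L$ visits every vertex $x_i^{\pm}$ and overlaps at every vertical edge, producing a 2-connected backbone inside $Wh([K^r],X)$. Since any supergraph of a 2-connected graph on the same vertex set is cut-point free, this would rule out cut points among the original generators.

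For $r \neq 0$ the flype move further modifies the picture around the flype box: it introduces two new vertical edges (with twist coefficients $\pm 1$), hence two new generator pairs in the Whitehead graph, together with a local change of arcs in that region. I would attach these new vertices to the backbone by inspecting the arcs in a neighborhood of the flype box (schematically shown in Figure~\ref{afterflype}) and checking that each new vertex lies on two internally disjoint paths; the hypothesis $a(e),a(e') \ge 4$ on the vertical edges flanking the flype rectangle is what leaves room for these attachments to reach two genuinely different backbone arcs on each side. The main obstacle I expect is precisely this bookkeeping, together with the ``edge effects'' at the top and bottom layers and at the leftmost and rightmost squares of each layer, where the generic trivalent vertex pattern is replaced by 2-valent trellis vertices. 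Each of these produces a finite, albeit delicate, local picture to be checked by inspection, much in the spirit of the case analysis in Section~\ref{SCPS} for Theorem~\ref{hyperbolic}.
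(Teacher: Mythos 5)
Your setup---realizing $Wh([K^r],X)$ as the graph of arcs of $K^r$ cut along the disks $\Delta_i$, with the disk-sides $\Delta_i^{\pm}$ as vertices---is exactly the paper's starting point. But from there you miss the one observation that makes the lemma nearly immediate: $K^r$ meets each $\Delta_i$ in \emph{exactly two} points (the knot runs through each vertical tube in two strands), so every vertex $x_i^{\pm}$ has degree exactly $2$ and the Whitehead graph is a $1$-manifold, i.e.\ a disjoint union of circles. Once this is noted, cut-point-freeness reduces entirely to connectivity, and the paper gets connectivity by isotoping this $1$-manifold near the horizontal lines of the trellis so that the vertical coordinate of $P$ is a height function with a single minimum and a single maximum; hence the graph is a single circle, which has no cut points. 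In particular the twist coefficients play no role here (the lemma is stated for all $r$ with no hypothesis on $a$), so your appeal to $a(e),a(e')\ge 4$ is a red herring: those bounds are used for incompressibility and hyperbolicity (Theorem \ref{main-incomp}, Theorem \ref{hyperbolic}), not for Lemma \ref{WHG}.

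The substantive gap in your route is the ``$2$-connected backbone.'' You assert that the subgraphs attached to the horizontal lines ``overlap at every vertical edge, producing a $2$-connected backbone,'' but this is essentially the whole content of the lemma and is not established: a union of connected subgraphs with pairwise overlaps gives connectivity, not $2$-connectivity, and you give no ear decomposition or disjoint-paths argument to upgrade it. Moreover, since the graph is $2$-regular, there are no ``extra'' edges beyond your proposed backbone---a spanning $2$-connected subgraph of a $2$-regular graph is the whole graph---so the ``supergraph of a $2$-connected graph'' step buys you nothing; you would in effect be forced to verify directly that the entire graph is one circle, which is what the height-function argument does in one line. Your case-by-case attachment of the flype-box generators could presumably be pushed through, but as written the proof is incomplete at its central claim, and it substitutes a lengthy enumeration for an argument the paper dispatches with degree counting plus a Morse-theoretic observation.
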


\vskip10pt

\begin{proof} 
 A regular neighborhood of each $\Delta_i$ in $H^r_1$ is bounded by two disks $\Delta_i^\pm$. Let $Q$ denote 
 $H^r_1$ minus these regular neighborhoods. Then $K^r\cap Q$ is a collection of arcs corresponding to the edges 
 of the Whitehead graph, and the disks $\Delta_i^\pm$ represent the vertices. After collapsing each disk to a point 
 we get the Whitehead graph itself. Since $K^r$ meets each $\Delta_i$ in exactly two points, this graph  is necessarily 
 a 1-manifold. Now we observe that one can isotope this 1-manifold in a neighborhood of each horizontal line of the 
 trellis so that the vertical coordinate of the plane $P$ is a height function on it with exactly one minimum and one 
 maximum. Hence it is a circle, and in particular cut point free.

\end{proof}

\begin{figure}[ht]
\centering
\includegraphics[width=3.5in]{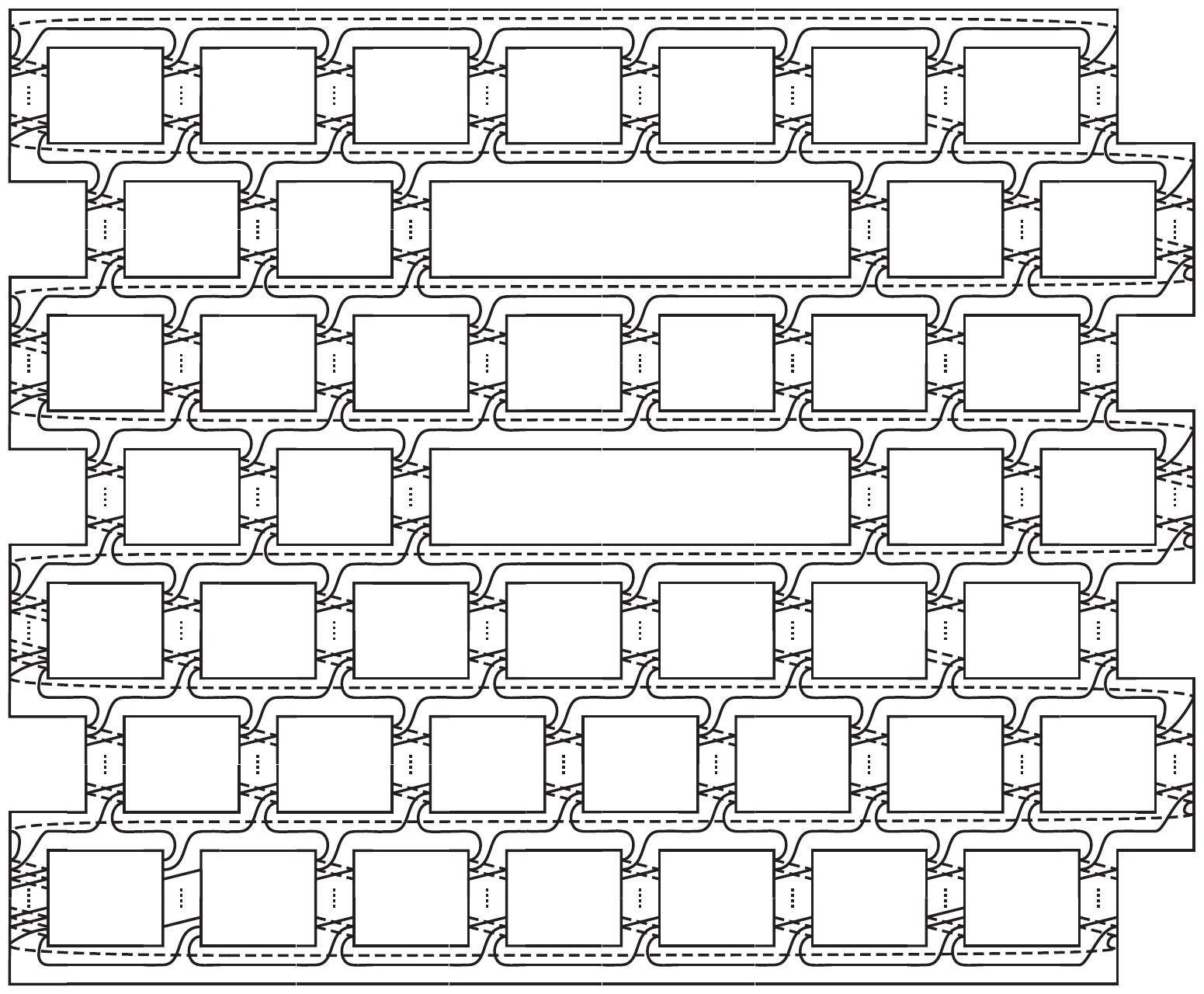}
\caption{A representative nice flypeable trellis carrying a knot.}
\label{KnotcarriedbyNFT}
\end{figure}

\subsection{Hyperbolicity}
\vskip5pt

Next we prove the hyperbolicity of our knot complements: 
\begin{theorem}\label{hyperbolic} 
Let $\mathcal{T}$ be a  nice flypeable trellis and let $K = K[a] \subset S^3$ be a  knot 
or link carried  by  $\mathcal{T}$. Assume that  $ a(f)\geq 3$ 
for all vertical edges $f$. Assume further that the  pair of edges $(e,e')$ at the sides of the 
flype region have twist coefficients $a(e), a(e') \geq 4$.
Then  $X = S^3 \smallsetminus \nbhd(K)$  is a hyperbolic manifold.
\end{theorem}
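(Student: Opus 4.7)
The plan is to apply Thurston's hyperbolization theorem: since $K$ is a knot in $S^3$, $X$ is hyperbolic if and only if $X$ is irreducible, has incompressible torus boundary, is atoroidal, and is not Seifert fibered. Irreducibility of $X$ is automatic for a knot complement in $S^3$, so three conditions remain to be verified.

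Incompressibility of $\partial X$ (equivalently, nontriviality of $K$) follows quickly from Theorem~\ref{main-incomp} applied with $r=0$: the surface $\Sigma \ssm \nbhd(K)$ is incompressible in each handlebody and hence is an incompressible properly embedded surface in $X$. Since $g(\TT)\ge 2$ under our hypotheses, this surface has strictly negative Euler characteristic. But the unknot complement is a solid torus, whose only properly embedded incompressible surfaces are meridional disks and annuli, both of nonnegative Euler characteristic, so $K$ cannot be the unknot.

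The bulk of the work lies in proving atoroidality. Suppose $T \subset X$ is an essential torus, chosen in its isotopy class to minimize $|T \cap \Sigma|$. A standard innermost-disk argument, combined with incompressibility of $\Sigma \ssm \nbhd(K)$ in each handlebody (Theorem~\ref{main-incomp} again), forces each component of $T \cap \Sigma$ to be essential both in $T$ and in $\Sigma \ssm \nbhd(K)$. The case $T \cap \Sigma = \emptyset$ is impossible because handlebodies contain no closed essential surfaces. Otherwise $T$ decomposes as a union of essential annuli in the pared handlebodies $(H_i, \NN_\Sigma(K))$, and the proof reduces to classifying such annuli combinatorially. I would do this by tracking how such an annulus meets the cutting disks $\Delta_j$ dual to the vertical edges of $\TT$, together with its pattern of horizontal arcs between consecutive horizontal lines of $\TT$; the twist coefficient bounds $a(f)\ge 3$ and $a(e), a(e')\ge 4$, together with the local pictures of Figure~\ref{plumbing}, should rule out all admissible configurations, so that no collection of such annuli can be glued across $\Sigma$ into a torus. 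Finally, ruling out Seifert fibration amounts to showing $K$ is not a torus knot; the very restricted class of essential surfaces in a torus knot exterior cannot realize the topological type of our $\Sigma \ssm \nbhd(K)$.

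I expect the atoroidality step to be the main obstacle. The combinatorial analysis of essential annuli is fairly uniform in the interior bricks of the trellis, but the leftmost, rightmost, topmost, and bottommost bricks of the brick-like pattern, as well as the region around the flype rectangle, break this uniformity and force separate subcases. These edge effects are precisely the source of the ``long and painful'' case analysis alluded to in the outline, and carrying them out rigorously is where the real labor of the proof lies.
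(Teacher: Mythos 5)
Your overall skeleton (reduce to atoroidality via Thurston, intersect an essential torus with $\Sigma$, use Theorem~\ref{main-incomp} to make the intersection curves essential, then do a combinatorial analysis of annuli in the pared handlebodies) matches the paper's, but there is a genuine gap in the step where you pass from ``minimal intersection'' to ``essential annuli in the pared handlebodies.'' Minimality of $|T\cap\Sigma|$ rules out annuli of $T\cap H_i$ that are parallel into $\partial H_i\ssm\NN(K)$, but it does \emph{not} rule out annuli parallel to the annular neighborhood $\bar\NN_\Sigma(K)$ of the knot itself. Such knot-parallel annuli genuinely occur (in the paper's language they correspond to cycles of trivial rectangles), so no combinatorial classification can eliminate them, and your plan of showing that ``no collection of such annuli can be glued across $\Sigma$ into a torus'' fails as stated. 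The paper handles exactly this residual case in Lemma~\ref{suffices to prove}: assuming only that $(H_2,\bar\NN_\Sigma(K))$ is acylindrical, it shows that a torus whose $H_2$-pieces are all knot-parallel must be boundary-parallel, via an argument with nested solid tori and the observation that gluing two handlebodies along an annulus yields a handlebody only if the annulus is primitive on one side. Some such geometric argument is indispensable and is entirely absent from your proposal.

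Two further points of comparison. First, the paper never classifies essential annuli in the inner pared handlebody $(H_1,\bar\NN_\Sigma(K))$ at all; the reduction in Lemma~\ref{suffices to prove} shows that acylindricity of the \emph{outer} handlebody alone suffices, with the $H_1$-side disposed of by the soft solid-torus argument. Your plan to analyze both sides simultaneously, and to do so using the disks $\Delta_j$ dual to the vertical edges (which are cutting disks for $H_1$), is both heavier and aimed at the wrong handlebody: the paper's enumeration (Proposition~\ref{boundaryparallelannuli}) cuts along the planar complementary disks $\DD$ of the trellis, which are meridian disks of $H_2$, and decomposes an annulus in $H_2$ into a cycle of rectangles whose adjacency graph is then shown to contain no nontrivial cycles. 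Second, the combinatorial enumeration is the actual content of the theorem and your proposal defers it entirely; as written, the argument establishes only the framing of the problem, and the framing itself needs the correction above before the case analysis could close it out.
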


Note that this gives us hyperbolicity of $S^3 \ssm\NN(K^r)$ for
all $r\in \Z$, since $\{K^r\}$ are all isotopic.

\begin{proof} 
Recall that $H_1$ and $H_2$ are the interior and exterior
handlebodies, respectively, of the trellis on which $K$ is defined. 
We first reduce the theorem to statements about annuli in $H_2$:

\begin{lemma}\label{suffices to prove}  If  the pared manifold 
$(H_2 ,\bar\NN_{\Sigma}(K))$ is acylindrical,   then $X$ has  no essential tori. 
\end{lemma}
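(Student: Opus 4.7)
The plan is to proceed by contradiction. Suppose $T\subset X$ is an essential torus; I will extract an essential pared annulus in $(H_2,\bar\NN_\Sigma(K))$, contradicting the acylindricity hypothesis. First isotope $T$ to meet $\Sigma$ transversely with $|T\cap\Sigma|$ minimized in the isotopy class of $T$ in $X$. Combining the incompressibility of $\Sigma\ssm\NN(K)$ in each $\widehat H_i=H_i\ssm\NN(K)$ (Theorem \ref{main-incomp}) with the irreducibility of $X$ and the standard innermost-disk arguments applied to disks on $T$ and on $\Sigma\ssm\NN(K)$, minimality forces every component of $T\cap\Sigma$ to be essential on both surfaces.

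If $T\cap\Sigma=\emptyset$, then $T$ lies in some $\widehat H_i$. The half-tube $V_i=\NN_{H_i}(K)$ is a solid torus meeting $\widehat H_i$ in the annulus $\NN_\Sigma(K)$ whose core $K$ generates $\pi_1(V_i)$, so $V_i$ deformation-retracts onto $\NN_\Sigma(K)$, and hence $\pi_1(\widehat H_i)\cong\pi_1(H_i)$, which is free. This contradicts the $\pi_1$-injection $\Z^2=\pi_1(T)\hookrightarrow\pi_1(\widehat H_i)$ forced by essentiality of $T$ in $X$.

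When $T\cap\Sigma\ne\emptyset$, the essential intersection curves partition $T$ into annuli alternating between the two sides of $\Sigma$, so some annulus $A$ lies in $\widehat H_2$. My goal is to show $A$ is essential as a pared annulus in $(H_2,\bar\NN_\Sigma(K))$. Its $\pi_1$-injectivity follows from the incompressibility of $\Sigma\ssm\NN(K)$ in $\widehat H_2$. If $A$ were properly homotopic into $\Sigma\ssm\NN(K)$, the resulting parallelism would allow an isotopy of $T$ reducing $|T\cap\Sigma|$, violating minimality.

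The main obstacle is ruling out the remaining case: that $A$ is properly homotopic into the parabolic annulus $A_2=\partial\NN(K)\cap H_2$. Such a homotopy forces every component of $T\cap\Sigma$ to be parallel on $\Sigma$ to a component of $\partial\NN_\Sigma(K)$. My plan is to use the product region between $A$ and a sub-annulus of $A_2$ furnished by this parallelism to isotope $T$ so that its $A$-piece moves into a collar of $\partial\NN(K)$; the associated intersection circles of $T$ with $\Sigma$, now parallel to $\partial\NN_\Sigma(K)$, can then be pushed transversely off $\Sigma$ into $\widehat H_1$, reducing $|T\cap\Sigma|$ and again contradicting minimality. The delicate point is carrying out this isotopy compatibly with the adjacent $\widehat H_1$-annuli, since acylindricity is only hypothesized for $H_2$.
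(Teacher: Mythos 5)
Your reductions at the start are fine and match the paper's: minimality plus incompressibility of $\Sigma\ssm\NN(K)$ on both sides forces $T\cap\Sigma\ne\emptyset$ with all intersection curves essential on both surfaces, so $T$ is cut into incompressible annuli, and the components in $\widehat H_2$ are neither compressible nor parallel into $\Sigma\ssm\NN(K)$. But your overall strategy --- extract an essential pared annulus and contradict acylindricity --- cannot close, because acylindricity leaves open exactly one possibility, and it is not a contradiction: each annulus $A$ of $T\cap H_2$ may be properly homotopic into the parabolic locus $\bar\NN_\Sigma(K)$. This is the main case, not a residual one, and your proposed mechanism for it fails. The parallelism region between $A$ and the annulus $\partial\NN(K)\cap H_2$ is a product whose far face lies on $\partial X$, not on $\Sigma\ssm\NN(K)$; pushing $T$ across it would push $T$ into the deleted neighborhood $\NN(K)$, which is not an isotopy in $X$. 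There is in general no isotopy of $T$ reducing $|T\cap\Sigma|$ in this configuration: a priori $T$ could be a companion/swallow-follow type torus hugging $K$ from the $H_2$ side while doing something essential in $\widehat H_1$, where no acylindricity is assumed. So "the delicate point" you flag is the entire content of the lemma, and your sketch does not supply an argument for it.

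The paper resolves this case by a different, genuinely topological argument that uses the handlebody structure of $H_1$ (which your proposal never invokes): push the single annulus $A$ onto the annulus neighborhood $A_1$ of $K$ in $\Sigma$, obtaining a homotopically nontrivial torus $T'=(T\cap H_1)\cup A_1$ inside the handlebody $H_1$; such a torus must bound a solid torus $V$ with $V\cap\partial H_1=A_1$. If $A_1$ is primitive in $V$ then $T\cap H_1$ is boundary-parallel and $T$ is inessential; if not, one writes $H_1=V\cup W$ glued along an annulus that is primitive on neither side, contradicting the fact that gluing handlebodies along an annulus yields a handlebody only when the annulus is primitive on at least one side. A separate induction handles the case where $T\cap H_2$ has several concentric annuli (also absent from your proposal). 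You would need to add an argument of this kind --- or some other way of controlling the $\widehat H_1$ side --- to complete the proof.
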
 

\begin{proof}
Let  $T \subset X$ be an incompressible torus, and let us prove that it
is boundary-parallel. 
Choose $T$ to intersect the surface $\Sigma\ssm \nbhd(K)$
transversally and with a minimal number of components.

The intersection $T \cap  (\Sigma \ssm \nbhd(K))$ must  be non-empty
since handlebodies do not contain incompressible tori.  Since 
$K$ satisfies the conditions of Proposition \ref{main-incomp}, the
surface $\Sigma \ssm \nbhd(K)$ is incompressible in both the  inner
and outer handlebodies $H_1$ and $H_2$.  

The intersection $T  \cap \Sigma$ cannot contain essential curves in
$\Sigma \ssm \nbhd(K)$  which are inessential in $T$ and essential
curves in $T$ which are inessential in  $\Sigma\ssm  \nbhd(K)$ as this
would violate the fact that both surfaces are incompressible. Curves
that are inessential in both surfaces are ruled out by minimality.
Hence $T \cap H_i$ ($i = 1,2$) is a collection of essential annuli in
$T$ which are incompressible in $H_i \ssm \NN(K)$. By 
minimality they are not  parallel to $\boundary H_i \ssm \NN(K)$.  

By the hypothesis of the lemma, 
$T\intersect H_2$ is a union of concentric
annuli parallel to a neighborhood of $K$ in $\boundary H_2$.

Suppose first that there is a single such annulus $A$. If we push $A$
to a component $A_1$ of a neighborhood of $K$ in $\boundary H_2$,
we obtain a torus $T'$ in $H_1$ which is homotopically
nontrivial, and hence bounds a solid torus $V$ in $H_1$. The 
intersection $V\intersect \boundary H_1$ is $A_1$. If $A_1$ is a
primitive annulus in $V$ then so is its complement, which is just
$T'\intersect H_1 = T\intersect H_1$. Hence $T\intersect H_1$ is boundary parallel to
$A_1$, so $T$ is an inessential torus in $M_\infty$, and we are done. 

If $A_1$ is not primitive in $V$, then we have exhibited the
handlebody $H_1$ as a union $V\union W$, where $W$ is a handlebody of
genus greater than one and $W\intersect V$ is an annulus $A_2$ in $\boundary W$. 
Now $A_2$ cannot be primitive in $W$, 
since $\boundary W \ssm A_2 = \Sigma\ssm A_1$, which is
incompressible. But the gluing of two handlebodies along an annulus
produces a handlebody only if the annulus is primitive to at
least one side. (This follows from the fact that any annulus in a
handlebody has a boundary compression, which determines a disk on one
side intersecting the core of the annulus in a single point.)
This is a contradiction. 

Now if $T\intersect H_2$ consists of more than one annulus, then one
of the annuli, together with a neighborhood of $K$, bounds a solid
torus $U$ in $H_2$ which contains all of the other annuli. Naming the
annuli $B_1,\ldots B_n$, where $B_1$ is the outermost, let $U'$ denote
$U$ minus a regular neighborhood of $B_1$, so that $U'$ 
contains $B_2\union \cdots \union B_n$, and
let $H'_1 = H_1 \union U'$. We can isotope $K$ through $U'$ to a knot
$K'$ on $\boundary H'_1$, so that the isotopy intersects $T$ in a
disjoint union of cores of $B_2,\ldots,B_n$. Now $H'_2 = cl(M\ssm
H'_1)$ intersects $T$ in the single annulus $B_2$, and we can apply
the previous argument to show that $T$ bounds a solid torus $V$ in which
$K'$ is primitive. If $n$ is even, then $K$ is outside $V$, so that
$T$ is inessential already. If $n$ is odd then $K$ is inside $V$, 
and since the isotopy from $K$ to $K'$ passed through a
sequence of disjoint curves in $T$, we conclude that $K$ is itself
primitive in $V$. Hence again $T$ is inessential. 

\end{proof}

\vskip4pt

\begin{proposition}\label{boundaryparallelannuli} Let $K$ be a knot carried by a nice 
flypeable trellis so that $a(e) \geq 3$ for every vertical edge $e$. Then there 
are no essential annuli in $(H_2, \NN_{\Sigma}(K))$.

\end{proposition}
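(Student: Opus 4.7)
The plan is to argue by contradiction: suppose an essential annulus $A$ exists in $(H_2, \bar\NN_\Sigma(K))$, and derive structural information about $A$ from how it meets a natural meridian system for $H_2$. The compactified plane $\hat P = P\cup\{\infty\}\subset S^3$ is a $2$-sphere, and since $\TT \subset P$ is connected and planar, $\hat P\cap H_2$ is a disjoint union of properly embedded disks $D_0, D_1,\ldots, D_k$, one for each face of $\TT$ in $\hat P$, with $D_0$ the outer face. These disks cut $H_2$ into two $3$-balls $B^+, B^-$, the front and back halves of the trellis complement. Theorem \ref{main-incomp} ensures that $\Sigma\ssm\NN(K)$ is incompressible in $H_2$, so together with the incompressibility of the $D_j$ I can isotope $A$ so that $A\cap\bigcup_j D_j$ is a disjoint union of essential arcs in $A$, none of them parallel in $A$ into $\partial A$. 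These arcs partition $A$ into a cyclic sequence of rectangles $R_1,\ldots,R_N$, each contained in one of $B^+, B^-$, with two opposite ``horizontal'' sides on $\partial A\subset \Sigma\ssm\NN(K)$ and the other two ``vertical'' sides on the $D_j$.

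The next step is to enumerate the possible rectangles by reading off the trellis picture. The curve $K$ meets each $\partial D_j$ in a predictable cyclic sequence of points determined by the local pictures of Figures \ref{plumbing} and \ref{KnotcarriedbyNFT}. The hypothesis $a(e)\ge 3$ forces the strands of $K$ running along each horizontal edge of $\TT$ to come in groups of at least three parallel copies, so the regions of $\Sigma\ssm(K\cup\bigcup_j\partial D_j)$ that can host a horizontal side of some $R_i$ are restricted to narrow product strips between consecutive parallel strands of $K$, together with a small collection of ``turning'' regions near the trellis vertices. Each rectangle is then classified by which pair of disks its vertical sides meet, which half of $H_2$ it lies in, and which product strip or turning region each horizontal side represents; this gives a finite list of admissible local types that I would tabulate by working through the allowed vertex types of Figure \ref{vertextypes}.

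The main obstacle is showing that no cyclic concatenation of these admissible rectangles can close up into an essential annulus. I would match successive rectangles along their shared vertical arcs: each arc of $A\cap D_j$ between two rectangles determines combinatorially how the endpoints of their horizontal sides sit on $\partial D_j\ssm K$, and this matching rule, together with the multiplicity from $a(e)\ge 3$, should force any closed chain of rectangles to remain within a single product strip parallel to a portion of $K$. In that case $A$ is properly isotopic into $\bar\NN_\Sigma(K)$, contradicting essentiality. The ``nice'' part of the nice-flypeable hypothesis, namely $2<i<b-1$ and the exclusion of the two outermost squares in the flype row, is what eliminates residual extremal configurations near the corners of the trellis, in which a rectangle could in principle run around a boundary vertex and produce a non-product closing chain. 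This case analysis, separating front-half and back-half cases and handling each local vertex and edge type in turn, is where the argument becomes long and technical, as the authors warn in the introduction.
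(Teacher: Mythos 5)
Your overall strategy is the paper's: cut $H_2$ along the planar meridian disks coming from the faces of $\TT$, use incompressibility of $\Sigma\ssm\NN(K)$ (Theorem \ref{main-incomp}) to isotope an essential annulus $A$ so that the intersection arcs are essential and cut $A$ into a cyclic chain of rectangles, each recorded by the pair of disk-sides and the pair of complementary regions it meets, and then rule out closed chains by enumeration. (The paper uses only the bounded faces, so the complement of the disk system is a single ball rather than your two, and rectangles near the left and right edges of the trellis wrap from front to back; that difference is cosmetic.)

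The gap is in your final step, where you assert that the matching rule ``should force any closed chain of rectangles to remain within a single product strip parallel to a portion of $K$.'' That is not what happens, and an argument built on it would fail. There are many rectangles not contained in product neighborhoods of arcs of $K$ --- the paper calls these \emph{nontrivial} and needs cases (B1)--(B10) to list them --- and they do form chains; for instance, chains wrapping around a vertical column alternate between front and back rectangles. The mechanism that actually closes the argument is a termination analysis on an adjacency graph of rectangles: (i) nontrivial rectangles only join disks lying in a single layer (Lemma \ref{single layer}); (ii) a trivial rectangle is never adjacent to a nontrivial one (Lemma \ref{no mixing}), because the two vertical sides of a nontrivial rectangle are separated on $\partial D$ by at least two points of $K\cap\partial D$ while for a trivial one they are separated by exactly one --- this, rather than ``groups of at least three parallel strands along horizontal edges,'' is where the hypothesis $a(e)\ge 3$ enters, and indeed the paper observes that this step breaks when a column has $a(e)=1$; and (iii) every maximal chain of nontrivial rectangles terminates in a rectangle admitting no further adjacency (Lemma \ref{rectangle adjacencies}). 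Only after (i)--(iii) can one conclude that every cycle consists of trivial rectangles, hence that $A$ is parallel into $\NN_\Sigma(K)$ and so not essential. Without isolating (ii) your twist hypothesis plays no identified role, and without (iii) the tabulation you propose does not by itself exclude essential annuli.
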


\vskip5pt

Before we  prove this proposition  we need some definitions and
notation. The proof will be somewhat technical and enumerative, but
the notation and data we will set up will then be useful in proving
Proposition \ref{flypedtrellis} which applies to the flyped case. 

\vskip4pt

Denote by $\DD$ the collection of disks in $P$ which are the bounded
regions of $P \ssm  \NN_P(\TT)$.
The front side of each $D \in \DD$ will be denoted by $D^+$ and the back side by $D^-$. Set 
$\widehat{\DD} =\{ D^+, D^- \, : \,D \in \DD\}$, the set of ``disk
sides''. The number of vertical edges of $\TT$ adjacent to the top
(resp. bottom)  
edge of a disk $D$ is denoted by $tv(D)$ (resp. $bv(D)$).  
Each $D\in\DD$ is contained in a single component of $P\ssm\TT$, 
and we sometimes abuse notation by calling this larger disk $D$ as
well. 

\vskip10pt

\begin{proof}[Proof of Proposition \ref{boundaryparallelannuli}] Assume  that $A$ is a properly embedded incompressible
 annulus in $(H_2,\bar\NN_{\Sigma}(K))$ which is not parallel to $\partial H_2 \ssm \NN(K)$. We will show that it is parallel to a 
 neighborhood of $K$ in $\partial H_2$ and hence not essential.

The proof will be in two stages. In the first step we will show that,
after isotopy,  any such annulus $A$  can be decomposed as a
cycle of rectangles where two adjacent rectangles meet along an arc of
intersection of $A$ with $\DD$. 
In the second step we show that a cycle of rectangles must be parallel to the knot.
Throughout we will abuse notation by referring to $\union_{D\in \DD}
D$ as just  $\DD$.

\vskip5pt

\noindent \underline{Step 1:} The disks of $\DD$ are essential disks
in the outer handlebody $H_2$, and
$H_2 \smallsetminus \nbhd(\mathcal{D})$  is a $3$-ball. Note that the
disk-sides in $\widehat\DD$ can be identified with disks of
$\boundary \nbhd(\DD)$ that lie in the boundary of this ball.  Isotope
$A$ to intersect $\DD$ transversally and with a minimal number of components.  The
intersection must be nonempty, as otherwise  
$A$ will be contained in a $3$-ball and will not be essential.  No
component of $A \cap \mathcal{D}$  
can be a  simple closed curve,  since this would either violate the 
the fact that  $A$ is  essential, or allow us to reduce the number of components 
in  $A \cap \mathcal{D}$ by cutting and pasting.
 
Let $\EE$ denote the set of components of $\Sigma\ssm (K\union \DD)$. 
By Lemma 2.2 of \cite{lustig-moriah:highgenus} we have that each $E\in
\EE$ is a 2-cell, and that the intersection of $\boundary E$ with
any disk $D \in \mathcal{D}$ is either

\begin{enumerate}
\item empty or 

\vskip4pt

\item consists of precisely one arc or 

\vskip4pt

\item consists of precisely two arcs along which $E$ meets $\DD$  from opposite sides of $P$.
\end{enumerate}
(One can also obtain these facts from the enumeration that we will
shortly describe of all local configurations of $\EE$, and in fact we
will generalize this later). 

We claim the arcs of intersection in  $A \intersect \DD$ must  be essential in
$A$: 
If not, consider an outermost 
such arc  $\delta$ which, together with a subarc $\alpha \subset \partial A$, bounds a sub-disk 
$\Delta_1$ in $A$. Let $D$ be the component of $\DD$ containing
$\delta$. By transversality, a neighborhood  
of $\delta$ in $\Delta_1$ exits $P$  from only one side, and hence both ends of $\alpha$ meet $\delta$ 
from the same side of $P$. The arc  $\alpha$  is contained in a single component $E$ of 
$\Sigma \ssm (K\union \DD)$. If $\boundary E$ meets $D$ in more than
one arc then, by (3) above, it does so from opposite  
sides of $P$. Hence the endpoints of   $\alpha$ can meet only one such
arc in $\partial E$. Since $E$ is  
a polygon,  $\alpha$ together with a subarc $\gamma$ of  
$\boundary D\intersect \boundary E$ bound a sub-disk  $\Delta_2
\subset E$.  Now $\gamma$ and $\delta$ must bound a sub-disk $\Delta_3$
in $D$.
The union  $\Delta_1 \cup \Delta_2 \cup \Delta_3$ is a $2$-sphere in the complement of $K$ which bounds a  
$3$-ball in $X$. Hence we can isotope $A$ to reduce the number of components in   
$A \cap \mathcal{D}$ in contradiction to the choice of $A$. We conclude that all arcs of $A \intersect \mathcal D$  
are  essential in $A$.

As the intersection arcs in  $A \cap \mathcal{D}$ are essential in $A$
they cut it into rectangles. We can summarize this structure in the following lemma:
\begin{lemma}\label{rectangle structure}
After proper isotopy in $(H_2,\bar\NN_\Sigma(K))$, an incompressible
annulus $A$ intersects $\DD$ in a set of essential arcs, which cut $A$
into rectangles. The boundary of a rectangle can be written as a union
of arcs $\delta_1,\delta_2\subset A\intersect\DD$, and arcs
$\alpha_1,\alpha_2 \subset \boundary A$, so that $\alpha_1$ and
$\alpha_2$ are contained in {\em distinct} components $E_1,E_2\in\EE$.
\end{lemma}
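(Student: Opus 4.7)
The first assertion follows immediately from Step 1, since essential arcs in an annulus are necessarily spanning arcs and hence cut $A$ into rectangles, each with two sides in $A \cap \DD$ and two sides lying on the two distinct boundary components of $\partial A$.

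For the claim that $\alpha_1, \alpha_2$ lie in distinct components of $\EE$, I would argue by contradiction: suppose a rectangle $R$ has both $\alpha_1, \alpha_2 \subset E$ for a single component $E \in \EE$. Since $E$ is a $2$-cell by Lemma 2.2 of \cite{lustig-moriah:highgenus}, the two disjoint properly embedded arcs $\alpha_1, \alpha_2 \subset E$ cobound a sub-disk $E' \subset E$ together with sub-arcs $\beta_1, \beta_2 \subset \partial E$. Both $\alpha_1$ and $\alpha_2$ emerge from $\delta_1 \subset D_1$ into $E$ on the same local side of the plane $P$ containing $D_1$, namely the side on which $R$ lies. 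By property (3) of the intersection $\partial E \cap D_1$, if $\partial E \cap \partial D_1$ consists of two arcs, these arcs lie on opposite sides of $P$; hence the endpoints $p_1, p_2$ of $\delta_1$ must lie on a single such arc, so $\beta_1 \subset \partial D_1$. By the same reasoning $\beta_2 \subset \partial D_2$. Consequently $\delta_i \cup \beta_i$ bounds a sub-disk $\Delta_i \subset D_i$ for $i = 1, 2$, and $\Delta_1$ is a $\partial$-compression disk for $A$ in $(H_2, \Sigma)$.

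Since $A$ is incompressible, standard $3$-manifold theory forces $A$ to be $\partial$-parallel in $H_2$: either parallel to an annulus $B \subset \Sigma \ssm \NN(K)$, or parallel to an annulus containing $K$. In the first case, $A$ can be isotoped to be disjoint from $\DD$, so no rectangles remain and the claim is vacuous. In the second, after isotoping $A$ to the standard parallel copy of $K$, each rectangle corresponds to an arc of $K$ between consecutive $\DD$-crossings, whose two $\alpha$-sides lie on opposite sides of $K$ in $\Sigma$ and hence in distinct components of $\EE$. The main delicate point in the contradiction step is verifying that $\beta_i \subset \partial D_i$, and does not stray onto $K$ or other disks of $\DD$; this is precisely where the planar structure of $\DD$ and the detailed local enumeration in Lemma 2.2 of \cite{lustig-moriah:highgenus} are essential.
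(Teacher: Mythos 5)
Your core argument is the paper's, run in the contrapositive. The paper first shows that the endpoints of each arc $\delta\subset A\cap\DD$ separate the points of $K\cap\partial D$ on $\partial D$ (otherwise $\delta$ yields a boundary compression of $A$ missing $K$), and then deduces $E_1\ne E_2$ from property (3) of Lemma 2.2 of Lustig--Moriah together with the observation that $\alpha_1,\alpha_2$ leave $D_1$ on the same side of $P$. You instead assume $E_1=E_2$, apply the same two facts in the opposite order to put $\partial\delta_1$ on a single arc of $\partial E\cap\partial D_1$, and arrive at the same boundary compression disk $\Delta_1$ missing $K$. Up to that point your reasoning is sound and uses exactly the paper's ingredients.

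The gap is in what you do with $\Delta_1$. The correct finish is: boundary-compressing $A$ along $\Delta_1$ yields a disk with boundary in $\Sigma\ssm\NN(K)$; by Theorem \ref{main-incomp} this disk cannot be essential, so $A$ must be parallel into $\partial H_2\ssm\NN(K)$ --- and this contradicts the standing hypothesis of Proposition \ref{boundaryparallelannuli} that $A$ is \emph{not} so parallel. You never invoke that hypothesis. Your substitute for it --- that an annulus parallel to $B\subset\Sigma\ssm\NN(K)$ ``can be isotoped to be disjoint from $\DD$'' --- is unjustified and false in general: if the core of $B$ essentially intersects some $\partial D$ in $\Sigma\ssm\NN(K)$, the push-in of $B$ cannot be isotoped off $\DD$, and its rectangles have both $\alpha$-sides in a single region of $\EE$; so without the non-parallelism hypothesis the conclusion of the lemma itself fails, and this case cannot be dismissed as vacuous. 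Your second case (parallel to an annulus containing $K$) also does not follow from a boundary compression that misses $K$ and is spurious here. The fix is one line: cite the hypothesis that $A$ is not parallel to $\partial H_2\ssm\NN(K)$ and close the contradiction there.
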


What remains to prove is just the statement that the components $E_1$
and $E_2$ of $\EE$ containing $\alpha_1$ and $\alpha_2$,
respectively, are distinct. 

Choose an arc $\delta$ of $A\intersect \DD$, and let $D$ be the disk
in $\DD$ containing $\delta$. We claim that $\boundary \delta$ separates
the points of  $K \cap \boundary D$ 
on $\partial D$. For if not, then $\delta$ would define a boundary
compression disk for $A$  in  $H_2$ which  
misses $K$.  After boundary compressing $A$ we will obtain (since $A$
is not parallel to $\boundary H_2 \ssm\NN(K)$) an essential disk $\Delta \subset H_2$ 
with $\partial \Delta \subset \Sigma \ssm \nbhd(K)$, which contradicts
Proposition  \ref  {main-incomp}. 

Now for a rectangle $R$ of $A\setminus \DD$, since each $\delta_i$ in
$\boundary R$ 
separates $K \cap \boundary D$ on $\partial D$, the arcs $\alpha_1$
and $\alpha_2$ meet $\boundary D$ in different arcs of $\boundary D
\ssm K$. Using transversality as before, $\alpha_1$ and $\alpha_2$
exit $D$ from the same side of $P$, and hence by properties (1-3)
above (from Lemma 2.2 of \cite{lustig-moriah:highgenus}),
they cannot be contained in the same component $E$ of $\EE$. We
conclude that $E_1\ne E_2$. This completes the proof of Lemma
\ref{rectangle structure}. 

\medskip

We will adopt the notation $(D^{e_1}_1, D^{e_2}_2, E_1, E_2)$ to
describe the data that determine a rectangle up to isotopy, where $D^{e_i}_i \in
\hhat \DD$ and $E_i\in\EE$. That is, the rectangle meets $D_i$ along
its boundary arcs $\delta_i$ on the sides determined by $e_i$, and the
arcs $\alpha_i$ are contained in $E_i$. We call a rectangle {\em
  trivial} if 
$E_1$ and $E_2$  are adjacent along a single sub-arc of $K \ssm
\DD$. This is because the rectangle can then be isotoped into a
regular neighborhood of this sub-arc. 

\vskip5pt

\noindent \underline{Step 2:} We now show that if $H_2 = S^3 \ssm
\NN(\TT)$, where $\mathcal{T}$ is a nice  
flypeable trellis, one cannot embed in   $\partial H_2  \smallsetminus 
\nbhd(K)$ a sequence of non-trivial   
rectangles  $R$,  as above,  which fit together to compose an 
essential annulus $A$.  

We first prove the following lemma:

\begin{lemma}\label{single layer} If $(D^{e_1}_1, D^{e_2}_2, E_1,
  E_2)$ determines a non-trivial rectangle  
then $D_1, D_2$ are contained in a single layer of the trellis.

\end{lemma}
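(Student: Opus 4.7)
The plan is a case-by-case enumeration of the components of $\EE$, organized by their location on $\TT$, followed by an analysis of the rectangle data that rules out rectangles straddling two layers unless they are trivial. I would first classify the components of $\EE$ into three families according to the piece of $\TT$ in whose regular neighborhood they lie: components near a single vertical edge, components near a single horizontal edge, and components near a vertex of $\TT$. Using the hypothesis $a(f)\ge 3$ on every vertical edge, the first family has boundary arcs only on $\boundary D^\pm$ for disks $D$ in the same layer as the vertical edge, and hence contributes no components joining two distinct layers. The remaining two families, however, may carry boundary arcs on $\boundary D^\pm$ for disks in both of the layers adjacent to the relevant horizontal line.

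Next, I assume for contradiction that $R=(D_1^{e_1},D_2^{e_2},E_1,E_2)$ is a non-trivial rectangle with $D_1,D_2$ in different layers $L_1\ne L_2$. By the enumeration, both $E_1$ and $E_2$ must lie in a neighborhood of a common horizontal line $\ell$ of $\TT$ separating $L_1$ and $L_2$. Since each sub-arc $\alpha_i\subset\boundary R\intersect E_i$ joins $\boundary D_1^{e_1}$ to $\boundary D_2^{e_2}$, each $E_i$ must actually touch both disk-sides $D_1^{e_1}$ and $D_2^{e_2}$ along its boundary; transversality at $\DD$ then pins down the side of $P$ on which each $E_i$ lies.

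The final step is to trace out the components of $\EE$ lying on this prescribed side of $P$ in a neighborhood of $\ell$. The arcs of $K$ running along the horizontal edges making up $\ell$, together with the portions of $\boundary D^\pm$ for the disks immediately above and below $\ell$, partition this region into a linear sequence of cells, each bounded on one horizontal side by an arc of $\boundary D$ from $L_1$ and on the other by an arc of $\boundary D$ from $L_2$, and separated from its neighbors by single arcs of $K\ssm\DD$. Consequently any two distinct cells meeting the same ordered pair $(D_1^{e_1},D_2^{e_2})$ must be adjacent along exactly one such arc of $K\ssm\DD$, which forces the rectangle to be trivial and contradicts the hypothesis.

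The hard part will be carrying out the enumeration cleanly at the vertex types of $\TT$, especially the T-vertices where a horizontal and a vertical edge meet, and the vertices at the extreme left/right of each layer. The nice-flypeable hypothesis (excluding the two leftmost and two rightmost squares from the flype region and forcing $2<i<b-1$) together with the bounds $a(f)\ge 3$ and $a(e),a(e')\ge 4$ are what kill off the pathological vertex configurations in which a cross-layer component of $\EE$ could meet the same disk pair through two non-adjacent cells; I expect the bulk of the work to be organizing these vertex cases so that the linearity of the cell sequence along $\ell$ is visible in every configuration.
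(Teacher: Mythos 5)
Your overall strategy --- localize the regions of $\EE$ that can meet disks in two different layers, then check that any two such regions meeting the same pair of disk-sides are adjacent along a single arc of $K\ssm\DD$ --- is the same strategy the paper uses, but two of your structural claims are wrong as stated, and the step where the lemma is actually proved is deferred rather than carried out. First, the components of $\EE$ do not localize into the three families you describe. Because $K$ runs along the horizontal lines in the back of $\Sigma$, and because regions wrap around the leftmost and rightmost columns of each layer, there are components of $\EE$ that meet \emph{every} back disk of a layer (the long regions $F,F'$), and components that start on the front of an end disk, wrap to the back, run the length of the layer, and also reach a front or back disk-side of the \emph{adjacent} layer (this happens at the protruding ends of layers). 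These are precisely the regions responsible for the delicate cross-layer connections; they are neither ``near a single horizontal edge'' nor ``near a vertex,'' so your reduction to a neighborhood of a single separating horizontal line $\ell$, and your picture of a linear sequence of cells each bounded above by a disk of $L_1$ and below by a disk of $L_2$, do not survive contact with these cases. In particular, cross-layer connections between a front disk-side $D^+$ and a back disk-side $D^-$ occur and must be treated.

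Second, and more seriously, the final ``consequently'' is a non sequitur. Even granting a linear sequence of cells along $\ell$ separated by single arcs of $K\ssm\DD$, nothing in that picture prevents three or more consecutive cells from all meeting the same ordered pair $(D_1^{e_1},D_2^{e_2})$, in which case the two extreme cells would span a non-trivial rectangle. Ruling this out is the entire content of the lemma, and it requires exactly the case-by-case inspection of vertex, end, and protrusion configurations that you postpone to ``the hard part.'' The paper carries this out by enumerating, for every combinatorial type of disk (interior disks sorted by top valence; leftmost and rightmost disks in left- and right-protruding, top, bottom, and interior layers; back disks), the complete list of regions meeting it together with the other disk-sides each region reaches, and then verifying in each case that the regions joining a fixed cross-layer pair are adjacent along one arc of $K$. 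Until that enumeration is done, you have a plausible plan for the proof, not a proof.
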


After this we will prove Lemma \ref{enumerate rectangles} which enumerates the types of nontrivial rectangles 
which do occur, and Lemma \ref{rectangle adjacencies} which describes the ways in which rectangles can be 
adjacent along their intersections with $\DD$. We will then be able to see that the adjacency graph of nontrivial 
rectangles contains no cycles, which will complete the proof of Proposition \ref{boundaryparallelannuli}.

The proof will be achieved by a careful enumeration of how disk-sides in $\widehat{\DD}$ are connected by regions 
in $\EE$. We will examine each type of disk in $\DD$ on a case by case basis. For each disk we will consider 
only  its connection to disks along $\EE$ regions meeting it on the top and sides. The complete picture can be 
obtained using the fact that a $180$ rotation in $P$ of a nice flypeable trellis is also a nice flypeable trellis 
(see Figure \ref{KnotcarriedbyNFT}).

\begin{proof}[Proof of Lemma \ref{single layer}]

We use the following notation:

\begin{enumerate}

\item Connectivity:  The symbol $$D^\pm  \stackrel{E}{\longleftrightarrow} D^\pm_1, \dots, D^\pm_n$$

\noindent where $D^\pm, D^\pm_1, \dots, D^\pm_n \in \widehat{\DD}$ and $E \in \EE$, means that $E$ meets the disks 
$D, D_1, \dots, D_n$ on the indicated sides. (Although the asymmetry
of treating one disk differently from the others seems artificial
here, it is suited to the order in which we enumerate cases).

\item Disk coordinates:  When considering a given disk $D$ we will use relative ``cartesian'' coordinates $D_{i,j}$
for $D$ and its neighbors, where $D = D_{0,0}$,  $i$ indicates layer and $j$ enumerates disks in a layer from 
left to right.

\item $\EE$ region coordinates: The $\EE$ region adjacent to the top edge of a disk will always be enumerated by $E_0$. 
Regions along the left and right edges will be enumerated  in a clockwise direction by consecutive integers.
Note that we do not enumerate the  regions near the bottom of a disk and they can be understood by symmetry.

\end{enumerate}


First we enumerate adjacencies for ``front'' disks $D^+$. In each case we will give a precise figure for the local 
configuration and a list of connectivity data which can be verified by inspection.


\subsection*{Middle disks}

Begin with  disks which are not leftmost or rightmost in their layer. Cases will  be separated depending on
the top valency of the disk in question.

\begin{enumerate}

\item[(A1)]  $tv(D) = 1$. The neighborhood of the top edge of $D = D_{0,0}$ is described  in Figure \ref{tv 1 interior}.

\begin{figure}[ht]
\centering
\includegraphics[width=4in]{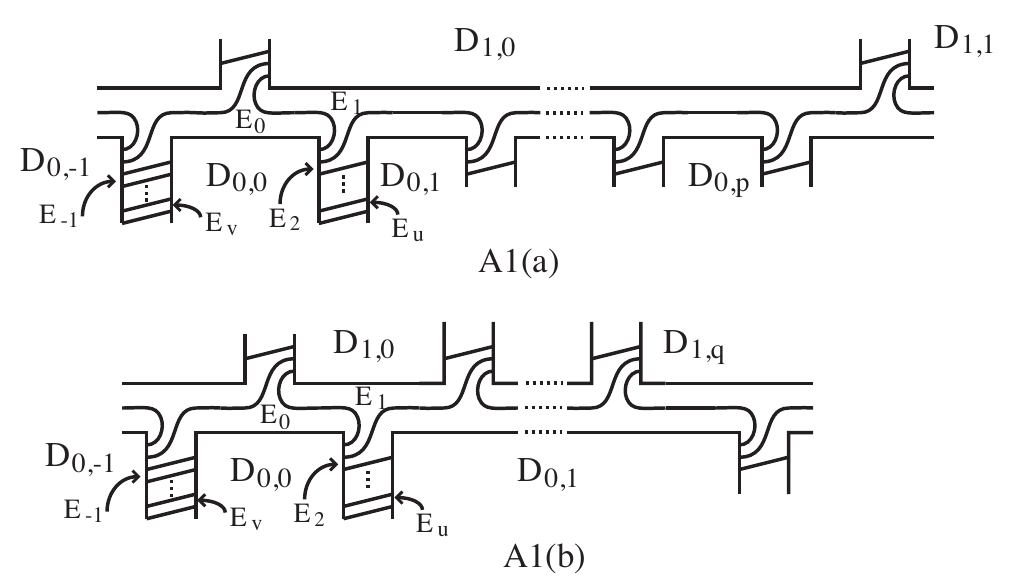}
\caption{This figure describes the case discussed in A1(a) and A1(b).}
\label{tv 1 interior}
\end{figure}

\noindent Note that there are two cases, depending on whether $tv(D_{0,1})$ is zero or not. 

\vskip7pt

\begin{enumerate}

\item $tv(D_{0,1}) = 0$: 
\begin{align*}
D^+_{0,0}  &\stackrel{E_0}{\longleftrightarrow} D_{1,0}^+, D_{0, -1}^+\\
D^+_{0,0}  &\stackrel{E_1}{\longleftrightarrow} D_{1,0}^+, D_{0,1}^+, \dots, D^+_{0,p}, D^+_{1,1}\\
D^+_{0,0}  &\stackrel{E_j}{\longleftrightarrow} D_{0, 1}^+, \,\,  2 \leq j \leq u\\ 
D^+_{0,0}  &\stackrel{E_k}{\longleftrightarrow} D_{0, -1}^+, \,\,\, v \leq k  \leq -1\\
\end{align*}


\item $tv(D_{0,1}) > 0$: 
\begin{align*}
D^+_{0,0}  &\stackrel{E_0}{\longleftrightarrow} D_{1,0}^+, D_{0, -1}^+\\
D^+_{0,0}  &\stackrel{E_1}{\longleftrightarrow} D_{1,0}^+, D_{1, 1}^+\\
D^+_{0,0}  &\stackrel{E_2}{\longleftrightarrow} D_{0,1}^+, D_{1,1}^+, \dots, D^+_{1,q}\\
D^+_{0,0}  &\stackrel{E_j}{\longleftrightarrow} D_{0, 1}^+, \,\,\, 3 \leq j \leq u\\ 
D^+_{0,0}  &\stackrel{E_k}{\longleftrightarrow} D_{0, -1}^+, \,\,\, v \leq k  \leq -1\\
\end{align*}

\end{enumerate}

\item[(A2)] $tv(D)  > 1$ (see Figure \ref{tv > 1 interior}). Note that $D_{1,s}$ is not the rightmost disk in 
its layer, by the  nice flypeable condition.

\begin{align*}
D^+_{0,0}  &\stackrel{E_0}{\longleftrightarrow} D_{1,0}^+, \dots, D^+_{1,s}, D_{0, -1}^+\\
D^+_{0,0}  &\stackrel{E_1}{\longleftrightarrow} D_{1,s}^+, D_{1, s + 1}^+\\
D^+_{0,0}  &\stackrel{E_2}{\longleftrightarrow} D_{1,s + 1}^+, D_{0,1}^+ \\ 
D^+_{0,0}  &\stackrel{E_j}{\longleftrightarrow} D_{0, 1}^+, \,\,\, 3 \leq j \leq u\\ 
D^+_{0,0}  &\stackrel{E_k}{\longleftrightarrow} D_{0, -1}^+, \,\,\, v \leq k  \leq -1\\
\end{align*}

\begin{figure}[ht]
\centering
\includegraphics[width=3.5in]{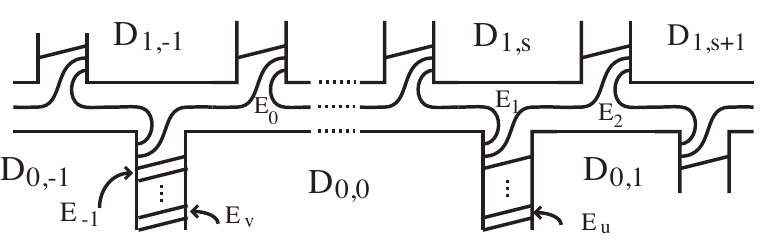}
\caption{The case  where $tv(D) > 1$, discussed in A2.}
\label{tv > 1 interior}
\end{figure}

\item[(A3)]  $tv(D)  = 0$.

\begin{enumerate}

\item $D$ is not in the top row (see Figure \ref{A3a}). In this case $D$ can be one of a sequence 
$D_{0,s}, \dots, D_{0,0}  , \dots, D_{0,t}, \,\,  (s \le 0 \le t)$ of disks whose top edge is adjacent 
to the bottom edge of $D_{1,0}$. If it is not the  rightmost one in the sequence (i.e. $ t > 0$) then:

\begin{align*}
D^+_{0,0}  &\stackrel{E_0}{\longleftrightarrow} D_{0, -1}^+\\
D^+_{0,0}  &\stackrel{E_1}{\longleftrightarrow} D_{1,0}^+, D_{1,1}^+, D^+_{0,s - 1}, D^+_{0,s},\dots, D^+_{0,t} \\
D^+_{0,0}  &\stackrel{E_2}{\longleftrightarrow} D_{0, 1}^+ \\ 
D^+_{0,0}  &\stackrel{E_j}{\longleftrightarrow} D_{0, 1}^+, \,\,\, 3 \leq j \leq u\\ 
D^+_{0,0}  &\stackrel{E_k}{\longleftrightarrow} D_{0, -1}^+, \,\,\, v \leq k  \leq -1\\
\end{align*}

\vskip5pt

If $D$  is the rightmost disk in the sequence  (i.e. $ t  = 0$)  then replace

$$D^+_{0,0}  \stackrel{E_2}{\longleftrightarrow} D_{0, 1}^+ $$

by 

$$D^+_{0,0}  \stackrel{E_2}{\longleftrightarrow} D_{0, 1}^+, D_{1, 1}^+ $$

\begin{figure}[ht]
\centering
\includegraphics[width=4in]{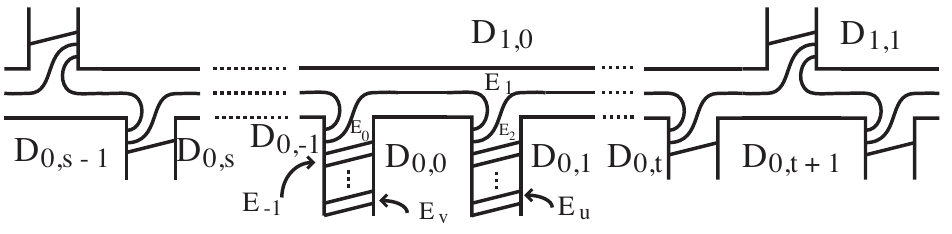}
\caption{The case  where $tv(D) = 0$, discussed in A3(a).}
\label{A3a}
\end{figure}

\item If $D$ is in the top row (see Figure \ref{A3b}) then the region $E_1$ connects $D_{0,0}^+$ to all 
disks  in the layer as well as to the back of the leftmost disk: 
 
 $$D^+_{0,0}  \stackrel{E_1}{\longleftrightarrow} D_{0, s}^-, D_{0, s}^+, \dots,  D_{0, t}^+,  $$
 
 where $ s  < 0 < t$ and $ t - s +1 = c$. The other connections are the same as in case $(a)$.

 \begin{figure}[ht]
\centering
\includegraphics[width=4in]{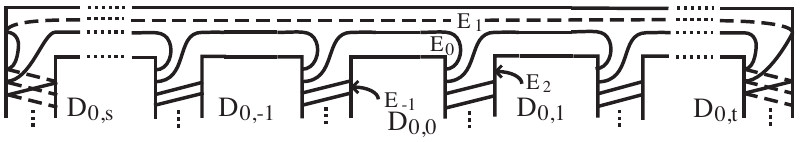}
\caption{The case  where $tv(D) = 0$, discussed in A3(b).}
\label{A3b}
\end{figure}
\end{enumerate}
\newcounter{savecount}
\setcounter{savecount}{\value{enumi}}
\end{enumerate}

\subsection*{Edge disks}

We now consider disks which are either leftmost or rightmost in their layer.

\begin{enumerate}
\setcounter{enumi}{\value{savecount}}

\item[(A4)] Let $D_{0,0}$ be the right disk in the top layer.  
This case is  as  in case A3(b) except that we set $t = 0$.  The
region $E_2$ now connects  $D^+_{0,0}$ to the  back side   $D_{0,i}^-$
of all  disks  in  the top layer, and we replace  

$$D^+_{0,0}  \stackrel{E_j}{\longleftrightarrow} D_{0, 1}^+, \,\,\, 3 \leq j \leq u$$ 

by

$$D^+_{0,0}  \stackrel{E_j}{\longleftrightarrow} D_{0, 0}^-, \,\,\, 3 \leq j \leq u$$

\item[(A5)] Let $D_{0,0}$ be the  left disk in a top layer. 

This case is as in  case A3(b) except that we set $s = 0$. Now $E_{k}, \, v \le k \leq 0$, connects  $D^+_{0,0}$ to  $D^-_{0,0}$.

\vskip7pt

 \noindent The following cases do not occur in top or bottom layers.

\vskip7pt

\item[(A6)] Let $D_{0,0}$ be the rightmost disk in a layer protruding to the left (see Figure \ref{Rightmost in left protruding}). 

\begin{align*}
D^+_{0,0}  &\stackrel{E_0}{\longleftrightarrow} D_{0, -1}^+,D_{1, 0}^+ \\
D^+_{0,0}  &\stackrel{E_1}{\longleftrightarrow} D_{1,0}^+, D_{1,0}^-\\
D^+_{0,0}  &\stackrel{E_2}{\longleftrightarrow} D_{1, 0}^-, D_{0,s}^-, \dots, D_{0,0}^-\\ 
D^+_{0,0}  &\stackrel{E_j}{\longleftrightarrow} D_{0, 0}^-, \,\,\, 3 \le j \le u\\ 
D^+_{0,0}  &\stackrel{E_k}{\longleftrightarrow} D_{0, -1}^+, \,\,\, v \le k \leq -1\\
\end{align*}

\noindent where $D_{0,s}$ is the leftmost disk in the layer.

\vskip7pt

\begin{figure}[ht]
\centering
\includegraphics[width=4in]{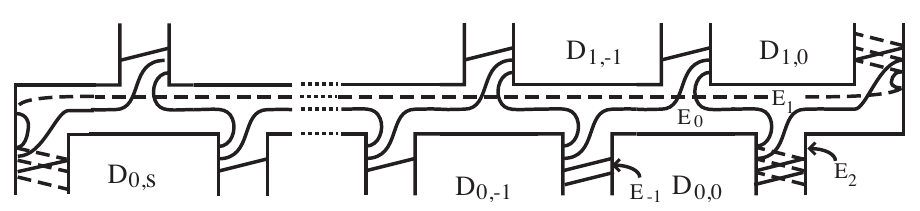}
\caption{Rightmost disk in layer protruding to the left as discussed in A6.}
\label{Rightmost in left protruding}
\end{figure}

\item[(A7)] Let $D_{0,0}$ be the leftmost disk in a layer protruding to the left (see Figure \ref{Leftmost in left protruding}). 

\begin{align*}
D^+_{0,0}  &\stackrel{E_0}{\longleftrightarrow} D_{0,0}^-,D_{1, 0}^+ \\
D^+_{0,0}  &\stackrel{E_1}{\longleftrightarrow} D_{1,0}^+, D_{1,1}^+\\
D^+_{0,0}  &\stackrel{E_2}{\longleftrightarrow} D_{0,1}^+, D_{1, 1}^+\\ 
D^+_{0,0}  &\stackrel{E_j}{\longleftrightarrow} D_{0, 1}^+, \,\,\, 3 \le j \le u\\ 
D^+_{0,0}  &\stackrel{E_k}{\longleftrightarrow} D_{0,0}^-, \,\,\, v \le k \leq -1\\
\end{align*}

Note that  $tv(D_{0,1}) = 1$ since we have a nice flypeable trellis.
\vskip7pt
\begin{figure}[ht]
\centering
\includegraphics[width=2.5in]{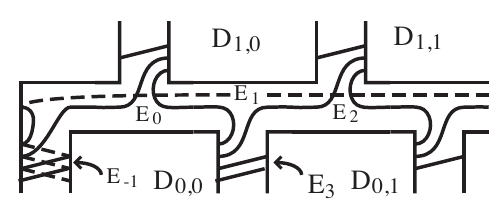}
\caption{Leftmost disk in layer protruding to the left as discussed in A7.}
\label{Leftmost in left protruding}
\end{figure}

\item[(A8)] Let $D_{0,0}$ be the rightmost disk in a layer protruding to the right (see Figure \ref{A8}). 

\begin{align*}
D^+_{0,0}  &\stackrel{E_0}{\longleftrightarrow} D_{1, -1}^-,D_{0,-1}^+ \\
D^+_{0,0}  &\stackrel{E_1}{\longleftrightarrow} D_{1,t}^+, D_{1,t}^-, \dots, D_{1,-1}^-\\
D^+_{0,0}  &\stackrel{E_2}{\longleftrightarrow} D_{0, s}^-, \dots,  D_{0,0}^-, D_{1,t}^+\\ 
D^+_{0,0}  &\stackrel{E_j}{\longleftrightarrow} D_{0, 0}^-, \,\,\,3 \le j \le u\\ 
D^+_{0,0}  &\stackrel{E_k}{\longleftrightarrow} D_{0, -1}^+, \,\,\, v \le k \leq -1\\
\end{align*}
where $D_{0,s}$ and $D_{1,t}$ are the leftmost disks in their layers.

\begin{figure}[ht]
\centering
\includegraphics[width=4in]{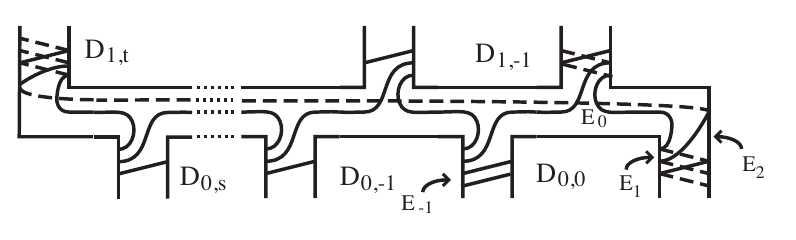}
\caption{Rightmost disk in layer protruding to the right as discussed in A8.}
\label{A8}
\end{figure}

\item[(A9)] Let $D_{0,0}$ be the leftmost disk in a layer protruding
  to the right (see Figure \ref{A9}).

Note that  $tv(D_{0,1}) = 1$ as we have a nice flypeable trellis.

\begin{align*}
D^+_{0,0}  &\stackrel{E_0}{\longleftrightarrow} D_{0,0}^-,D_{1, 0}^+ \\
D^+_{0,0}  &\stackrel{E_1}{\longleftrightarrow} D_{1,0}^+,D_{1,1}^+\\
D^+_{0,0}  &\stackrel{E_2}{\longleftrightarrow} D_{0, 1}^+, D_{1,1}^+\\ 
D^+_{0,0}  &\stackrel{E_j}{\longleftrightarrow} D_{0, 1}^+, \,\,\,3 \le  j \le u\\ 
D^+_{0,0}  &\stackrel{E_k}{\longleftrightarrow} D_{0,0}^-, \,\,\,  v \le k \leq -1\\
\end{align*}

\begin{figure}[ht]
\centering
\includegraphics[width=3.in]{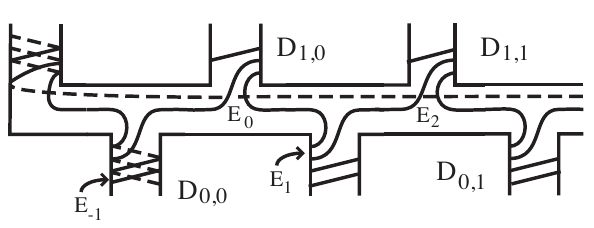}
\caption{Leftmost disk in layer protruding to the right as discussed in A9.}
\label{A9}
\end{figure}

\end{enumerate}

\vskip7pt

Now consider disks of type $D^-$ (see Figure \ref{Backconfiguration}):

\vskip7pt

\begin{figure}[ht]
\centering
\includegraphics[width=3.0in]{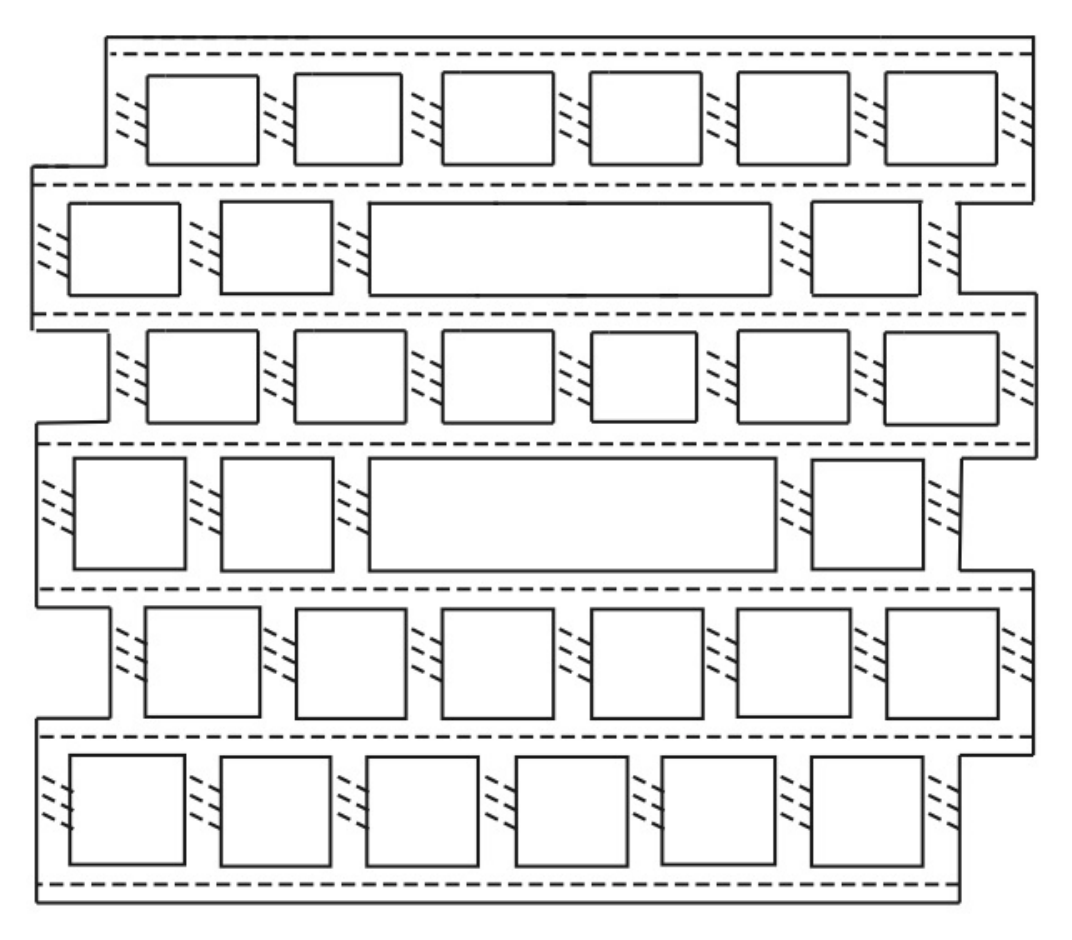}
\caption{Configuration on the back of $H_1$.   The sub-arcs of the knot are indicated by the dashed arcs.}
\label{Backconfiguration}
\end{figure}

In the back of each layer there are two ``long'' regions $F$ and $F'$ which meet every back disk in its top
or bottom edge respectively. In addition, in every interior column there is a sequence of regions which meet only 
the two adjacent disks to the column. The regions that meet the leftmost or rightmost columns (including $F$ 
and $F'$) give connections from back disks to front disks in the same or adjacent layers. These connections 
were given 
in the discussion of the front disks.

\vskip7pt

The cases described above, together with their $180^\circ$ rotations, give all possible connections between disks. 
For example, the connections along the bottoms of the disks in cases (A4) and (A5) are obtained as rotations of 
the connections in cases (A6--A9).

\vskip7pt

It can now be checked that any time two $\EE$ regions connect disks which are not in the same layer then these
regions are adjacent along a single arc of $K \ssm \DD$. Here are some examples of this analysis:

In case (A1)(a) the only connections between $D^+_{0,0}$ and disks in a different row are  

\begin{align*}
D^+_{0,0}  &\stackrel{E_0}{\longleftrightarrow} D_{1,0}^+\\
D^+_{0,0}  &\stackrel{E_1}{\longleftrightarrow} D_{1,0}^+  \,\, \,and \\
D^+_{0,0}  &\stackrel{E_1}{\longleftrightarrow} D^+_{1,1}\\ 
\end{align*}

Note that $E_0$ and $E_1$ are adjacent along a single arc and  hence the rectangle  determined by the first two 
lines is trivial.

\vskip5pt

In case (A2) the only connections between $D^+_{0,0}$ and disks in a
different row are:  

\begin{align*}
D^+_{0,0}  &\stackrel{E_0}{\longleftrightarrow} D_{1,0}^+,\ldots,D_{1,s-1}^+\\
D^+_{0,0}  &\stackrel{E_0}{\longleftrightarrow} D_{1,s}^+\\
D^+_{0,0}  &\stackrel{E_1}{\longleftrightarrow} D_{1,s}^+ \\
D^+_{0,0}  &\stackrel{E_1}{\longleftrightarrow} D^+_{1,s+1}\\ 
D^+_{0,0}  &\stackrel{E_2}{\longleftrightarrow} D^+_{1,s+1}\\ 
\end{align*}

Here the connections in the first line do not belong to any
rectangle. The second and third line define a trivial rectangle, and
so do the third and fourth. 

\vskip5pt

In case (A7) the only connections between $D^+_{0,0}$ and disks in a different row are:  

\begin{align*}
D^+_{0,0}  &\stackrel{E_0}{\longleftrightarrow} D_{1,0}^+\\
D^+_{0,0}  &\stackrel{E_1}{\longleftrightarrow} D_{1,0}^+ \\
D^+_{0,0}  &\stackrel{E_1}{\longleftrightarrow} D^+_{1,1}\\ 
D^+_{0,0}  &\stackrel{E_2}{\longleftrightarrow} D^+_{1,1}\\ 
\end{align*}

\vskip5pt

Here the first and second lines define a trivial rectangle as do the
third and fourth.

Finally, let $D^-$ be a back disk in the middle of a layer protruding
to the left. It is connected to the the rightmost front disk of the
layer below using the connections in  (A8) second line, and to the
rightmost back disk of the layer above using the connections in  (A6)
third line. None of these connections is part of a rectangle. 

\vskip5pt

The remaining cases are similar (for the global picture it is helpful
to  consult  Figure \ref{KnotcarriedbyNFT}), and an inspection of them completes
the proof of the lemma.

\end{proof}

\vskip5pt

We now compile a list of the nontrivial rectangles. First we list nontrivial rectangles from front disks to front disks.

\vskip5pt

\begin{enumerate}

\item[(B1)] Disks which share a vertical column $e$  (see Figure \ref{B1}). The disks will be numbered $D_{0,0}$ 
and $D_{0,1}$. The $\EE$ regions will be numbered clockwise around $D_{0,0}$ starting from the top. 

\vskip5pt

\begin{enumerate}

\item $tv(D_{0,1}) > 0$,   $bv(D_{0,0}) > 0$: 

We obtain a rectangle for  ($D^+_{0,0}, D^+_{0,1}, E_i, E_j$), 

where $2 \leq i < j \leq a(e)$.

\vskip5pt

\item $tv(D_{0,1}) = 0$,   $bv(D_{0,0}) = 0$: 

We obtain a rectangle for ($D^+_{0,0}, D^+_{0,1}, E_i, E_j$), 

where $1 \leq i < j \leq a(e) + 1$.

\vskip5pt

\item   $tv(D_{0,1}) =  0$,   $bv(D_{0,0}) > 0$:

We obtain a rectangle for  ($D^+_{0,0}, D^+_{0,1}, E_i, E_j$), 

where $1 \leq i < j \leq a(e)$.

\vskip5pt

\item $tv(D_{0,1}) > 0$,   $bv(D_{0,0}) = 0$: 

This case is obtained from the previous  one by a $180^\circ$ rotation.

\end{enumerate}

\vskip10pt

 \noindent In all preceding cases the rectangles are nontrivial  when $j - i \geq 2$.
 
 \begin{figure}[ht]
\centering
\includegraphics[width=4.6in]{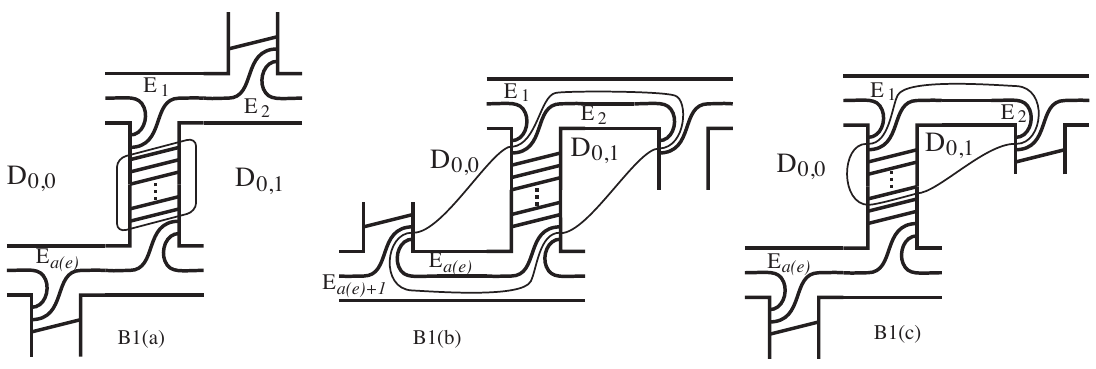}
\caption{Case B1. For each subcase one example rectangle is indicated by a simple closed curve tracing out its boundary. 
In (a), $(i,j) = (2,a(e))$. In (b), $(i,j)=(1,a(e)+1)$. In (c), $(i,j)=(1,a(e)-2)$.}
\label{B1}
\end{figure}

\vskip5pt
 
 \item[(B2)] Consider a sequence of disks in a layer which is contained in a flype box or is  in a top or bottom 
 layer which is adjacent to a flype box. In such cases we obtain a sequence   $D_{0,0}, \dots, D_{0,p}$ so that 
 
 $$tv(D_{0,k}) = 0, \, 1 \leq k \leq p,$$
 
$$bv(D_{0,k}) = 0, \, 0 \leq k \leq p - 1$$

\vskip5pt

We obtain a nontrivial rectangle ($D^+_{0,0}, D^+_{0,p}, E_1, E_{-1}$). Note that the case $p = 1$ already 
appears in case (B1).

 \begin{figure}[ht]
\centering
\includegraphics[width=3.3in]{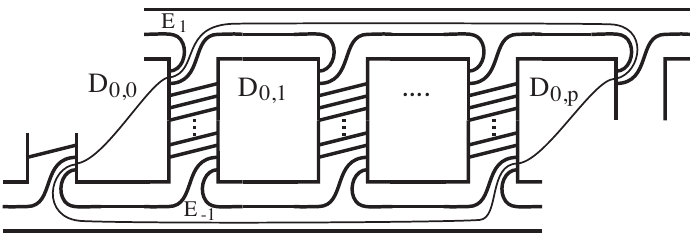}
\caption{Case B2.}
\label{B2}
\end{figure}

\setcounter{savecount}{\value{enumi}}
\end{enumerate}

We now consider rectangles which connect back disks to back disks. The back of every  layer has the same structure, 
which can be  seen in Figure \ref{Backconfiguration}: There are two ``long'' $\EE$ regions  denoted by $F, F'$, which 
meet every disk on its  top and bottom edge respectively. Given a column $e$  there is a sequence of at least two regions 
which connect the disks adjacent to $e$,  which we number $F_1, \dots, F_{a(e) - 1}$ from top to bottom.

\vskip5pt

\begin{enumerate}
\setcounter{enumi}{\value{savecount}}

\item[(B3)] For any two  $D_{0,p}, D_{0,q}$ in a layer there is a rectangle ($D^-_{0,p}, D^-_{0,q}, F, F' $).

\vskip5pt

\item[(B4)] Let $e$ be the column between $D_{0,p}$ and $D_{0,p +1}$, and $F_1,\ldots,F_{a(e)-1}$ the 
associated regions. Moreover write $F _0 = F$ and $F_{a(e)} = F'$.  These determine rectangles 
$$(D^-_{0,p}, D^-_{0,p + 1}, F_i, F_j), \,\,  0 \le i < j \le a(e)  $$  which are nontrivial when 
$j - i \ge 2$.

\vskip5pt

\setcounter{savecount}{\value{enumi}}
\end{enumerate}

We now consider rectangles which connect front disks to back disks. These occur at the right and left edges of the trellis.  

In  $(B5 - B10)$ let $D$ be a rightmost or leftmost disk in 
a layer, and let $e$ be the column  to its right or left respectively. We number the regions meeting $D^+$ clockwise 
around $D$, with $E_0$ meeting the  top edge. We then have the following rectangles, all of which can be seen in 
 Figures \ref{KnotcarriedbyNFT} and  \ref{Backconfiguration}:
\vskip5pt

\begin{enumerate}
\setcounter{enumi}{\value{savecount}}
\item[(B5)] Let $D$ be a rightmost disk in a left protruding inner layer. There are  rectangles: 
 
$$(D^+, D^-, E_i,  E_j), \,\,  2 \le i < j \le a(e)$$

\vskip5pt

\noindent  Note that $D^+$ connects to the back of the first disk in the layer through only one region, 
hence there  is no corresponding rectangle.

\item[(B6)] Let $D$ be a rightmost disk in a right protruding inner layer. There  are rectangles: 
 
$$(D^+, D^-, E_i,  E_j), \,\,  2 \le i < j \le a(e)$$

\vskip5pt

\item[(B7)] Let $D$ be a rightmost disk in a right protruding bottom layer. There are rectangles: 
 
$$(D^+, D^-, E_i,  E_j), \,\,  2 \le i < j \le a(e) +1$$

\vskip25pt

\item[(B8)] Let $D$ be a leftmost disk in a right protruding bottom layer. There are rectangles: 
 
$$(D^+, D^-, E_i,  E_j), \,\,  -a(e) +2 \le j < i \le 0$$

\vskip5pt

\item[(B9)] Let $D$ be a leftmost disk in a right protruding top layer. There are rectangles: 
 
$$(D^+, D^-, E_i,  E_j), \,\,  -a(e) + 2 \, \le j < i \le 1$$

\vskip4pt

\item[(B10)] Let $D$ be a rightmost disk in a right protruding top layer. There are rectangles: 
 
$$(D^+, D^-, E_i,  E_j), \,\,  2 \le i < j \le a(e)$$

\vskip4pt

\end{enumerate}

All remaining cases are obtained from the above by a $180^\circ$ rotation of the plane $P$.
The rectangles are non trivial when $j - i \geq 2$.

\vskip4pt

\begin{lemma}\label {enumerate rectangles}  All nontrivial rectangles are described in cases 
$(B1) - (B10)$. 

\end{lemma}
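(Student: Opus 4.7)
The plan is to use the connectivity enumeration already compiled in the proof of Lemma \ref{single layer}, together with Lemma \ref{single layer} itself, to verify that every nontrivial rectangle appears in the list (B1)--(B10). Recall that a rectangle is determined by the data $(D_1^{e_1}, D_2^{e_2}, E_1, E_2)$, that Lemma \ref{single layer} forces $D_1$ and $D_2$ to lie in a single layer, and that nontriviality requires $E_1$ and $E_2$ not to be adjacent along a single subarc of $K \ssm \DD$. Thus the task reduces to scanning, for every ordered pair of disk-sides in a common layer, the list of regions in $\EE$ that meet both, and then discarding the pairs of regions that meet each other along one $K$-arc.

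First I would handle the front-to-front case. Fixing a disk $D^+_{0,0}$ in a layer, the enumeration in (A1)--(A9) records which regions $E_i$ meet $D^+_{0,0}$ and at which other disk-side they reappear. Filtering for the other endpoint being another front disk in the \emph{same} layer, the $E_i$ that connect $D^+_{0,0}$ to $D^+_{0,1}$ are precisely those labeled by indices $i$ that appear on the vertical edge $e$ between the two disks, together with the edge regions $E_0$ or $E_{u}$ (and their bottom counterparts) when the relevant top/bottom valency vanishes. A direct reading of the four top/bottom valency subcases recovers exactly case (B1)(a)--(d), and the remaining possibility, where a run of consecutive disks $D_{0,0}, \dots, D_{0,p}$ all have vanishing top (or bottom) valency, produces the rectangle $(D^+_{0,0}, D^+_{0,p}, E_1, E_{-1})$ of case (B2). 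That only these configurations of vanishing valencies occur at inner positions is a consequence of the nice flypeable hypothesis, which forces such runs to lie inside a flype box or abut a top/bottom layer.

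Next I would treat the back-to-back case using the description of the back of a layer (Figure \ref{Backconfiguration}): the two long regions $F, F'$ meet every back disk along its top and bottom respectively, and on each interior column $e$ there is a sequence of regions $F_1, \dots, F_{a(e)-1}$ connecting the two adjacent back disks. This immediately yields the rectangle $(D^-_{0,p}, D^-_{0,q}, F, F')$ of (B3) for any pair of back disks in the layer, and the rectangles of (B4) between column neighbors (with the convention $F_0 = F$, $F_{a(e)} = F'$). These exhaust the back-back possibilities since any two back disks separated by more than one column have $F$ and $F'$ as their only common regions. For the front-to-back case, Lemma \ref{single layer} together with the enumeration in (A4)--(A9) and Figure \ref{Backconfiguration} shows that a front disk can share two regions with a back disk in the same layer only when the front disk sits at the extreme left or right column of its layer, and then the shared regions are precisely the ones winding around the extremal vertical edge $e$; enumerating across the four options (rightmost vs.\ leftmost, left- vs.\ right-protruding, inner vs.\ top/bottom layer) and invoking $180^\circ$ symmetry yields exactly cases (B5)--(B10).

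The main obstacle is purely the bookkeeping: one must verify case-by-case that no region pair has been overlooked, and that the nontriviality inequality ($j - i \geq 2$, or the particular boundary regions named in (B2)) accurately records when the two regions fail to share a single $K$-arc on $\boundary D$. Both checks are mechanical inspections of the same figures already used for Lemma \ref{single layer}, so once the scan above is carried out and the $180^\circ$ rotations are applied to cover the bottom-edge situations symmetric to (A4)--(A9), nothing outside the enumerated list (B1)--(B10) can arise. This completes the verification.
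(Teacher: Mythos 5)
Your proposal takes essentially the same route as the paper: the paper's proof likewise invokes Lemma \ref{single layer} to restrict attention to disks in a single layer and then declares the result a case-by-case inspection using the connectivity data compiled in (A1)--(A9) and the back-configuration figure. Your write-up simply carries out that inspection in somewhat more explicit detail (splitting into front--front, back--back, and front--back cases), which matches how the list (B1)--(B10) is organized, so there is nothing further to add.
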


\vskip4pt

\begin{proof} 
By Lemma \ref{single layer}, we need to consider only rectangles between disks contained in a single horizontal 
layer. The proof is then a case by case inspection, using the same data and techniques as the proof of 
Lemma \ref{single layer}.

\end{proof}

\vskip7pt

Two rectangles $R = (D_1^{e_1}, D_2^{e_2}, E_1, E_2)$, and 
$R' = (D_3^{e_3}, D_4^{e_4}, E_3, E_4)$, where $e_i \in \{\pm 1\}, i = 1 \dots, 4$, will be 
called {\it adjacent along $D_2$} if the following holds:

\begin{enumerate}

\item $D_2 = D_3$ and $e_2 = - e_3$ and

\vskip4pt

\item The arcs of intersection $E_1 \cap D_2, E_2 \cap D_2$ are equal to the arcs of intersection
$E_3 \cap D_3, E_4 \cap D_3$.

\end{enumerate}
This condition captures the combinatorial aspects of an adjacency of
rectangles in the annulus $A$.

\vskip4pt

If after renumbering $R$ and $R'$ are adjacent along one of the disks, we say that they are {\it adjacent}. 

\begin{lemma} \label{no mixing} There are no adjacencies between trivial and non trivial rectangles.

\end{lemma}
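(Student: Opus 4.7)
The plan is to show that whether a rectangle is trivial or nontrivial is controlled by the combinatorial data of the shared arc $\delta$ with an adjacent rectangle — data which is independent of which of the two sides of $D_2$ the rectangle sits on. Since adjacent rectangles share the same $\delta$, this will force them to be of the same type.

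First I would note that an adjacency of $R = (D_1^{e_1}, D_2^{e_2}, E_1, E_2)$ with $R' = (D_2^{-e_2}, D_4^{e_4}, E_3, E_4)$ along $D_2$ is fully specified by the arc $\delta$ on $D_2$, whose endpoints lie in two arcs $\beta_a, \beta_b$ of $\boundary D_2 \ssm K$. The pair $\{E_1, E_2\}$ consists of the $\EE$-regions adjacent to $\beta_a, \beta_b$ on the $e_2$-side of $\boundary D_2$, while $\{E_3, E_4\}$ are those on the $-e_2$-side. Since $\delta$ separates the points of $K \cap \boundary D_2$ (as shown in the proof of Lemma \ref{rectangle structure}), there is at least one crossing point $p$ of $K$ with $\boundary D_2$ between $\beta_a$ and $\beta_b$.

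Next, assuming $R$ is trivial, $E_1$ and $E_2$ share exactly one arc $\alpha$ of $K \ssm \DD$, and this arc must emerge from $\boundary D_2$ at some such $p$ on the $e_2$-side. At the very same $p$, on the $-e_2$-side, the knot continues as another arc $\alpha'$ of $K \ssm \DD$, which locally separates $E_3$ from $E_4$. The heart of the argument is then to show that $E_3$ and $E_4$ share exactly the arc $\alpha'$ and nothing more, which forces $R'$ to be trivial as well.

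The main obstacle is this last verification, which I would carry out by a case-by-case inspection mirroring the enumeration of disk-connectivity in the proof of Lemma \ref{single layer} and of rectangle types in Lemma \ref{enumerate rectangles}. In the generic interior case, such as a B1 rectangle on the front adjacent across a disk to a B4 rectangle on the back (both involving the two disks that share a vertical column), the indexings of front $\EE$-regions and back $\EE$-regions along a column are in natural bijection, and the ``index gap'' $j - i$ that distinguishes trivial ($=1$) from nontrivial ($\geq 2$) is the same on the two sides. The more delicate situations are the edge and corner disk types A4--A9, where ``long'' $\EE$-regions spanning multiple columns appear, and one must check that these long regions do not introduce any extra shared $K$-arc between $E_3$ and $E_4$ on the back that is absent between $E_1$ and $E_2$ on the front. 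In each such configuration the nice flypeable hypothesis — which keeps any flype region two columns away from the left and right edges and two layers away from the top and bottom — supplies enough buffer to preserve the front-back symmetry of the ``single shared arc'' condition, and a direct examination completes the check.
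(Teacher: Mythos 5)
Your proposal is correct in substance and rests on the same key observation as the paper's proof: an adjacency forces the two rectangles to carry the identical pair of arcs $E_1\cap D_2$, $E_2\cap D_2$ on the shared disk, so any combinatorial quantity read off from the position of these arcs relative to $K\cap\partial D_2$ is side-independent and can be used to separate trivial from nontrivial. The paper packages this as a single numerical invariant --- the number of points of $K\cap\partial D_2$ separating the two arcs --- which equals $1$ for a trivial rectangle (this is exactly your point $p$) and, by inspection of the finite list (B1)--(B10) already produced by Lemma \ref{enumerate rectangles}, is at least $2$ for every nontrivial one; the lemma then follows in one line. Your version instead propagates triviality across the disk and defers to ``a direct examination'' the claim that $E_3$ and $E_4$ share only the arc $\alpha'$. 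Be aware that this deferred claim is logically the whole lemma restated (it says precisely that $R'$ is trivial), and verifying it head-on would require re-enumerating the local configurations around every trivial rectangle, including the edge cases with long regions that you flag. The more economical way to close your argument is to run the inspection over the nontrivial side of the dichotomy, which is already enumerated: if $R'$ were nontrivial it would appear in (B1)--(B10), where the two arcs on $\partial D_2$ are always separated by at least two points of $K\cap\partial D_2$, contradicting the fact that they coincide with the arcs of $R$, which are separated by exactly the one point $p$. With that substitution your outline becomes the paper's proof.
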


\vskip7pt

\begin{proof} An inspection of cases (B1 - B10) shows that given a non trivial rectangle 
$R = (D_1^{e_1}, D_2^{e_2}, E_1, E_2)$, the arcs of intersection $E_1\cap D_i$ and $E_2\cap D_i$ 
for each $i=1,2$ are separated by at least two points of $K\cap \partial D_i$. Since for a trivial rectangle 
these arcs are always separated by just one such point, there can be no adjacencies between trivial and 
nontrivial rectangles. 

\end{proof}

\vskip7pt

The {\it adjacency graph of rectangles} will be the graph whose vertices are rectangles, where we place and 
edge between $R$ and $R'$, labeled by $D$, whenever $R$ and $R'$ are adjacent along $D$. Formally speaking  
a pair rectangles might have distinct adjacencies labeled by the same disk. However, the arguments in 
Lemma \ref{rectangle adjacencies} show that  this never occurs in our setting.

\vskip7pt

\begin{lemma}\label{rectangle adjacencies} In the adjacency graph of rectangles, every cycle contains 
only trivial rectangles.

\end{lemma}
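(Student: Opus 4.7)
The plan is to argue by contradiction. Suppose $C = (R_1, \dots, R_n)$ is a cycle in the adjacency graph containing a nontrivial rectangle. By Lemma \ref{no mixing}, every rectangle in $C$ is nontrivial; by Lemma \ref{single layer}, all rectangles in $C$ lie in a single horizontal layer $L$ of $\TT$; and by Lemma \ref{enumerate rectangles}, each $R_i$ has one of the types (B1)--(B10). I will derive a contradiction by showing that the induced subgraph of the adjacency graph on nontrivial rectangles within $L$ contains no cycles.

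The key structural observations come from the two conditions in the definition of adjacency. Condition (1), $e_2 = -e_3$, forces consecutive rectangles to approach the shared disk from opposite sides. Since (B1) and (B2) approach both endpoint disks from the front ($+$), while (B3) and (B4) approach both from the back ($-$), these two families cannot directly follow each other in $C$: after a front-to-front rectangle, the next rectangle at the shared disk must approach it from the back, and conversely. This forces an alternating pattern, with front-to-back rectangles of types (B5)--(B10) available only as transitions at edge disks of $L$.

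The crucial restriction is condition (2): the pair of arcs on $\boundary D$ used by two adjacent rectangles must coincide. Since the regions $E \in \EE$ enumerated in cases (A1)--(A9) meet $\boundary D$ in specific sub-arcs determined by the $K$-crossings and column indices, the matching requirement pins down the possible successor of a rectangle both in type and in index. For instance, a back-side (B3) rectangle uses the sub-arcs that $F$ and $F'$ cut on $\boundary D$ (the topmost and bottommost back-side sub-arcs), and for an adjacent front-side (B1) or (B2) to match on $\boundary D$ the two $E$-indices are severely constrained; analogous pairings hold between (B1)/(B2) and the column-type back regions appearing in (B4), and at edge disks between (B5)--(B10) and either family.

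I would then carry out the case analysis: for each of (B1)--(B10), list the rectangles that can be adjacent to it at each of its two endpoint disks, and verify that following these forced transitions around a would-be cycle leads to an inconsistency. The argument exploits the constraint that within $L$ the ``column position'' of successive front and back rectangles must propagate coherently, so that closing up a cycle requires matching arcs on two consecutive disks whose combinatorics (determined by the twist coefficients $a(e) \ge 3$) are incompatible. The main obstacle will be the bookkeeping: the case check is lengthy because of sub-cases based on the valencies $tv$ and $bv$, on the position of each disk in $L$ (edge vs.\ interior, top vs.\ bottom layer, left vs.\ right protruding), and on whether the disks lie inside the flype region, where the $E$-structure differs. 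Once this enumeration is complete, no cycle of nontrivial rectangles can be realized, and, as a by-product, the parenthetical remark that no two rectangles can have two distinct adjacencies along the same disk falls out of the same analysis.
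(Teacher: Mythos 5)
Your overall strategy is the same as the paper's: use Lemma \ref{no mixing} to reduce to showing that the subgraph of nontrivial rectangles has no cycles, invoke Lemmas \ref{single layer} and \ref{enumerate rectangles} to confine everything to one layer and to the list (B1)--(B10), and then analyze adjacencies case by case. But the proposal stops exactly where the proof has to begin: the entire case analysis is deferred (``I would then carry out the case analysis\dots''), and the one mechanism that actually makes it work is never identified. The paper does not derive a contradiction from ``incompatible combinatorics on two consecutive disks''; it exhibits a strict descent. Concretely, in case (B1)(a) a front rectangle indexed by $(i,j)$ is adjacent across the column to a back rectangle of type (B4) indexed by $(i-2,j-2)$, which (if $i-2\ge 2$) is adjacent to a front rectangle indexed by $(i-2,j-2)$, and so on: each round trip around the column drops the index by $2$, so every maximal chain of nontrivial rectangles terminates at a rectangle with no further adjacency, and a vertex lying on a terminating chain cannot lie on a cycle. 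This monotone quantity, together with the identification of the dead ends --- (B2) rectangles and the $(1,j)$ rectangles arising when $tv(D_{0,1})=0$ meet their disks in vertical arcs on \emph{opposite} edges, while every candidate partner meets the disk in arcs on the same edge or in horizontal edges, so no adjacency condition (2) can be satisfied --- is the content of the lemma. Without naming a decreasing index or an equivalent termination argument, your assertion that ``following these forced transitions around a would-be cycle leads to an inconsistency'' is a restatement of the goal, not a proof of it.

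Two smaller points. First, your claim that the front--front family (B1)/(B2) and the back--back family (B3)/(B4) ``cannot directly follow each other'' is stated backwards: condition (1) ($e_2=-e_3$) forces a front--front rectangle to be followed at the shared disk by a back--back one, so the two families must alternate; your next sentence says this correctly, but the two sentences contradict each other as written. Second, the reduction ``all rectangles of the cycle lie in a single layer'' is fine, but it deserves the one-line justification that adjacent rectangles share a disk, so Lemma \ref{single layer} propagates along the cycle.
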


\vskip7pt

\begin{proof} 
By Lemma \ref{no mixing}, if a cycle contains any trivial rectangle then it contains only trivial rectangles. 
Hence it suffices to restrict to the subgraph of nontrivial rectangles and show that it contains no cycles. 

Wrapping around each vertical column there are (several) sequences of adjacent rectangles. Consider for 
example case (B1)(a). A front rectangle indexed by $(i,j), i \geq 3, $ is adjacent to a back rectangle from 
case (B4) indexed by $(i - 2, j - 2)$. If $i - 2 \geq 2$   then this rectangle is adjacent to a front rectangle 
indexed by $(i - 2, j - 2)$. If  $i - 2 \leq 1$ there are no further adjacencies. Hence any such chain terminates 
in a rectangle that has no further adjacencies and thus is not part of a cycle.

Similar arguments apply to the rest of case (B1), with one proviso: If $tv(D_{0,1})=0$ and a back rectangle is 
indexed by $(1,j)$, then it is adjacent to one further front rectangle indexed by $(1,j)$ (see Figure \ref{B1} case (c)), 
but that rectangle meets $D_{0,1}$ in two vertical arcs on opposite edges. Any  rectangle involving $D_{0,1}^-$ 
meets it either in vertical arcs on the same edge of $\partial D_{0,1}$ (cases (B4 - B10)), one vertical arc and one 
horizontal edge (case (B4)), or in horizontal edges (case (B3)). Hence the chain terminates at this point. 

A front rectangle in case (B2) again meets its disks along vertical arcs on opposite edges, and so is not adjacent 
to any rectangle. 

In cases (B5 - B10), a similar analysis as case (B1) holds. Note that in these cases rectangles are not divided 
into ``front'' and ``back'', rather each rectangle wraps around from front to back.

Every adjacency of a back rectangle must be to a rectangle of type already discussed, hence back rectangles 
cannot be a part of a cycle either.

\end{proof}

Every essential annulus  $A \subset (H_2, \NN(K))$ determines a cycle of rectangles in the adjacency graph, 
and Lemma \ref{rectangle adjacencies} implies that all these rectangles are trivial. Hence the annulus is parallel 
to the knot. This finishes the proof of Proposition \ref{boundaryparallelannuli}

\end{proof}

 Lemma \ref{suffices to prove} together with Proposition
 \ref{boundaryparallelannuli} and Thurston's Haken geometrization theorem 
 complete the proof of  Theorem \ref{hyperbolic}.
 
\end{proof}

\vskip10pt

\subsection {Ruling out $I$-bundles}

In order to apply Proposition \ref{mainproposition} to the proof of Theorem \ref{maintheorem} we need to further  
show that $(H^r_2 ,\bar\NN_{\Sigma^r}(K^r))$ is not an $I$-bundle. We do this in the following lemma:

\vskip5pt

\begin{proposition}\label{flypedtrellis} For each $r \in \mathbb{Z}$
  the pared manifold $(H^r_2 ,\bar \NN_{\Sigma^r}(K^r))$ is not an $I$-bundle. 

\end{proposition}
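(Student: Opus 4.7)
The plan is to extend the annulus-cycle analysis of Proposition \ref{boundaryparallelannuli} to the flyped trellis $\TT^r$, showing that $(H_2^r, \bar\NN_{\Sigma^r}(K^r))$ is acylindrical; then, since $H_2^r$ is a handlebody of genus $n_r \ge 2$, any $I$-bundle structure would produce essential annuli, ruling out the $I$-bundle possibility.

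First, I would verify that Step~1 of the proof of Proposition \ref{boundaryparallelannuli} carries over to the flyped setting. Given a properly embedded incompressible annulus $A$ in $(H_2^r, \bar\NN_{\Sigma^r}(K^r))$ that is not parallel to a neighborhood of $K^r$, one isotopes it to meet the disk system $\DD^r$ of $H_2^r$ transversely with a minimal number of components, obtaining a decomposition of $A$ into rectangles $(D_1^{e_1}, D_2^{e_2}, E_1, E_2)$ with the $E_i$ components of $\Sigma^r \ssm (K^r \cup \DD^r)$. Those arguments use only the incompressibility of $\Sigma^r \ssm \NN(K^r)$ in both handlebodies, which is supplied by Theorem \ref{main-incomp}, so they apply verbatim.

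Next I would redo the enumeration of nontrivial rectangles. Outside a neighborhood of the flype rectangle $R$, $\TT^r$ agrees with $\TT$ and the cases (B1)--(B10) give exactly the same nontrivial rectangles. Near $R$, one must account for the $2|r|$ new vertical edges with twist coefficient $\pm 1$. The crucial observation is that the nontriviality condition $j-i \ge 2$ in (B1) and (B4) cannot be met when $|a(e)| = 1$, so the new edges contribute no nontrivial rectangles of their own; what remains is a finite local enumeration, in the style of (A1)--(A9), of the disk--region adjacencies around the new edges and the two sides of $R$, verifying that no new type of nontrivial rectangle emerges and that the analog of Lemma \ref{single layer} still holds. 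The adjacency argument of Lemma \ref{rectangle adjacencies} then carries over unchanged: chains of nontrivial adjacencies must terminate, since by Lemma \ref{no mixing} they cannot pass through trivial rectangles and the new $\pm 1$-twist edges act as dead ends. Thus the adjacency graph of nontrivial rectangles has no cycles, every essential annulus is parallel to $K^r$, and $(H_2^r, \bar\NN_{\Sigma^r}(K^r))$ is acylindrical.

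To conclude, suppose for contradiction that $(H_2^r, \bar\NN_{\Sigma^r}(K^r))$ is an $I$-bundle over a surface $F$ with pared locus the sub-bundle over $\partial F$. Since the pared locus is a single annulus, $F$ has a single boundary circle; since $\pi_1(H_2^r) \cong \pi_1(F)$ is free of rank $n_r \ge 2$, the base $F$ is either an orientable surface of genus $n_r/2 \ge 1$ with one boundary component (untwisted case) or a non-orientable surface with $n_r \ge 2$ crosscaps and one boundary component (twisted case). In either case $\chi(F) \le -1$, so $F$ contains an essential two-sided simple closed curve $\gamma$, and the vertical $I$-bundle over $\gamma$ is an incompressible properly embedded annulus in $(H_2^r, \bar\NN_{\Sigma^r}(K^r))$ with boundary in the horizontal boundary, not parallel into $\partial H_2^r \ssm \NN(K^r)$ because $\gamma$ is essential in $F$. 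This contradicts acylindricity. The main obstacle is the case-by-case enumeration near the flype region: checking rigorously that the new $\pm 1$-twist edges and the disks they introduce produce no new nontrivial rectangles and no new cycles in the adjacency graph.
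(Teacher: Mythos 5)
Your strategy has a genuine gap: the acylindricity claim at its heart is false for $r \neq 0$. The flype move is an isotopy of $K^r$ supported in the flype box $B^{fl}$, and the boundary of $Y = B^{fl} \cup \NN(K^r)$ meets $H_2^r$ in one or two annuli which are essential in the pared manifold $(H_2^r, \bar\NN_{\Sigma^r}(K^r))$; so the flyped pared manifold genuinely does contain essential annuli, and no refinement of the rectangle enumeration can show otherwise. Concretely, the step that fails is your appeal to Lemma \ref{no mixing}: its proof rests on the fact that for a nontrivial rectangle the two arcs of intersection with a disk boundary are separated by at least two points of $K \cap \partial D$, whereas for a trivial rectangle they are separated by exactly one. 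When a column has $|a(e)| = 1$ this dichotomy collapses, trivial and nontrivial rectangles can become adjacent, and cycles through the new columns do occur --- the new $\pm 1$-twist edges are not ``dead ends'' but precisely the conduits for the new essential annuli.

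The correct argument accepts the existence of these annuli and exploits them: take the union $\AAA$ of the one or two essential annuli in $\partial Y \cap H_2^r$, which separates $H_2^r$ into two pieces, and show that the piece $\widehat H_2^r = S^3 \ssm int(H_1^r \cup Y)$ --- the outer handlebody of a new trellis $\widehat\TT^r$ obtained by amalgamating the flype box into a single column --- contains no essential annuli rel $\AAA$. That is where your rectangle machinery is actually deployed, with a handful of new local cases around the amalgamated column. Since an $I$-bundle admits no nontrivial JSJ decomposition, an essential annulus system cutting off an acylindrical piece cannot exist in an $I$-bundle, and the proposition follows. Your final step (an $I$-bundle over a base with $\chi(F) \le -1$ contains vertical essential annuli) is fine and is essentially how the $r=0$ case is handled, but for $r \neq 0$ it cannot be reached by the route you propose.
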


\begin{proof} For $r = 0$ this is a consequence  of Proposition \ref{boundaryparallelannuli}. The rest of 
the proof will be  given for $r > 0$. The case $r < 0$ will follow from the usual $180^{\circ}$ rotation.

In this case it is easy to see that $(H^r_2,  \NN_{\Sigma^r}(K^r))$ does in fact contain  essential annuli. 
In particular  Lemma \ref{no mixing} fails because now there are columns with $a(e) = 1$ and this allows
adjacencies between trivial and nontrivial rectangles.

The idea is to prove that there is a union of (one or two) essential annuli in  $(H^r_2,  \NN_{\Sigma^r}(K^r))$ 
which separates $(H^r_2,  \NN_{\Sigma^r}(K^r))$ into two components, one of which contains no essential annuli. 
This is impossible in an $I$-bundle, since an $I$-bundle does not have a non-trivial JSJ decomposition  
(see \cite{jaco-shalen}) hence the proposition follows.

Consider the rectangular closed curve labeled $\tau$ in Figure \ref{BFS}. Let $\tau^+$ ($\tau^-$) denote 
the curve in  the front (back) of $\Sigma$ lying in front of  (back of) $\tau$. The curves $\tau^+$, $\tau^-$ 
bound disks denoted by $\Delta_F$  and $\Delta_B$ in $H_2^r$, whose projection to $P$ is the disk 
$\Delta$ bounded by $\tau$. The points between $\Delta_F$  and $\Delta_B$ which  project to $\Delta$ 
form a $3$-ball denoted by $B^{fl}$ called the {\it flype box}.

\begin{figure}[ht]
\centering
\includegraphics[width=4.8in]{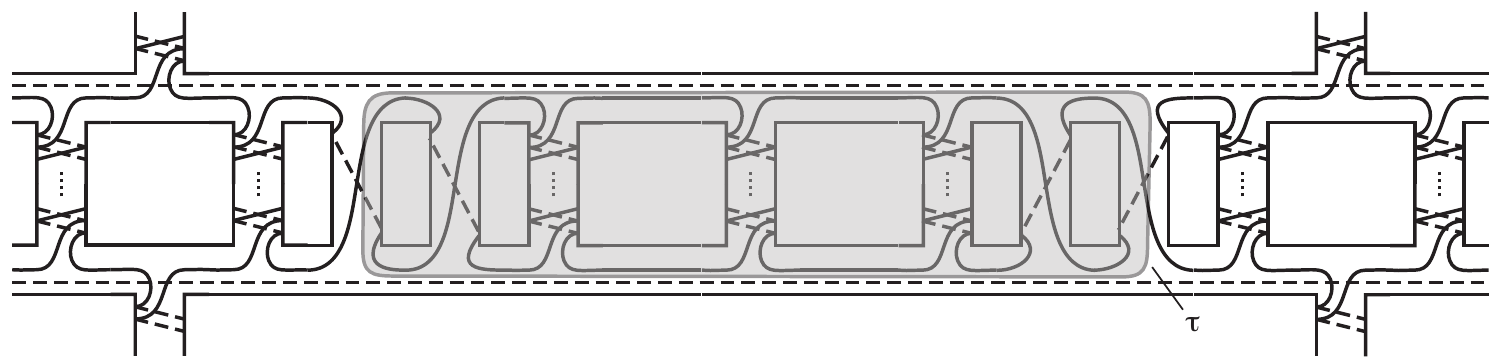}
\caption{Constructing  the flype box for $r = 2$. The disk $\Delta$ is shaded and $\tau$ is its boundary.}
 \label{BFS}
\end{figure}

Let $Y = B^{fl} \cup \NN_{S^3}(K)$, where we  choose the neighborhood $\NN(K)$ so small so that $Y$ 
is a genus  two handlebody. Note that $\AAA= \partial Y \cap H_2^r$ is a union of one or two annuli, depending 
on $K$. Furthermore set $\widehat{H}^r_1 = H_1^r \cup Y,\,\, \widehat{H}^r_2 = S^3 \ssm int(\widehat{H}^r_1)$. 
Note also that $\AAA$ separates  $H_2^r$ into two components one of
which has closure $\widehat{H}^r_2$. We now show 
that the pared manifold $(\widehat{H}^r_2, \AAA)$ contains no essential  annuli.

The core of $\AAA$ is a link $\widehat{K}^r$ on $\widehat{\Sigma}^r = \partial \widehat{H}^r_1 = \partial \widehat{H}^r_2$. 
The link  $\widehat{K}^r$ is carried by a new trellis $\widehat{\TT}^r$ where $B^{fl}$ is replaced by a single vertical column 
(see Figure \ref{TRFB}). The projection into $P$ of $K^r$ outside $B^{fl}$ is equal to the projection of
$\widehat{K}^r$ outside this column.

\begin{figure}[ht]
\centering
\includegraphics[width=4.5in]{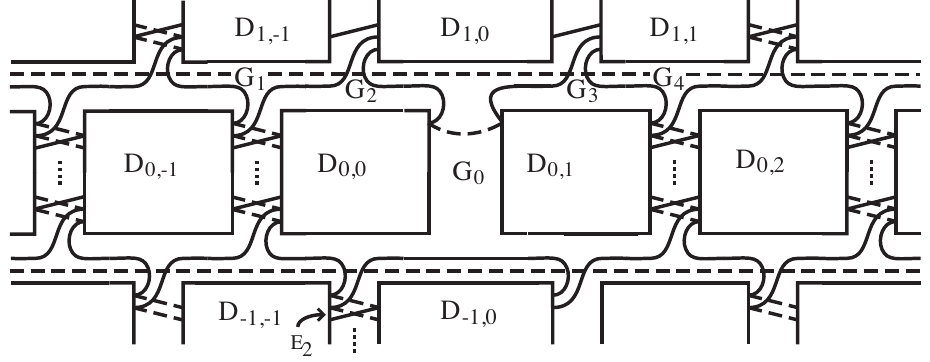}
\caption{The new trellis $\widehat\TT^r$, obtained by amalgamating the
  flype box and the columns it meets into one new column. The new
  column is here between $D_{0,0}$ and $D_{0,1}$. }
 \label{TRFB}
\end{figure}

We apply the same techniques as in the proof of Lemma \ref{single
  layer}. Note that there are four front 
$\EE$ regions $G_0, G_1, G_2, G_3$ whose configurations are somewhat different. The other front $\EE$ regions 
stay the same. The back $\EE$ regions stay the same but note that in
the back of the new column there are no ``small''  
regions connecting just  $D^-_{0,0}$ and $D^-_{0,1}$, because there is
only one arc in that column.

Note by inspection that all $\EE$ regions are disks and that every region intersects each disk side in 
$\widehat{\DD}$ in at most a single arc. In other words this recovers
conditions (1-3) coming from Lemma 2.2 of
\cite{lustig-moriah:highgenus}, as used in Step (1) of Proposition \ref{boundaryparallelannuli}.
Therefore we can  apply the same proof as in Step (1)
to conclude that any  annulus in $(\widehat{H}^r_2, \AAA)$, after
suitable isotopy, is 
decomposed into a cycle of rectangles. 

An inspection of the diagram yields the following new nontrivial 
rectangles. (There are also rectangles that have appeared in previous
cases, and which are not listed below.)

\vskip40pt

\subsection* {Front rectangles}

\begin{enumerate}

\item [(C1)]  $(D^+_{-1,-1}, D^+_{-1,0}, G_0, E_i),  \,\, i \geq 3$. The regions  $E_i$ are in the column between 
$D_{-1,-1}$ and $D_{-1,0}$, as indicated in Figure \ref{TRFB}.

\vskip7pt

\item [(C2)]  $(D^+_{0,1}, D^+_{1,1}, G_0, G_4)$ 

\end{enumerate}

\subsection* {Back rectangles}  We denote the ``long" back regions in the layer of $D_{0,0}$ by $F$ and $F'$, 
as in case (B3). We also enumerate the ``small" back regions in the column between $D_{0,0}$ and $D_{0,-1}$ as $
F^{-1}_1,\ldots,F^{-1}_q$, and similarly the ``small" back regions in the column between $D_{0,1}$ and $D_{0,2}$ as
$F^1_1,\ldots,F^1_p$. We then obtain:

\vskip7pt

\begin{enumerate}

\item [(C3)]  $(D^-_{0,1}, D^-_{0,i}, F, F'),  \,\, i \neq 0$ 

\vskip7pt

\item [(C4)]  $(D^-_{0,0}, D^-_{0,i}, F, F'),  \,\, i \neq 1$ 

\end{enumerate}

The last two cases are of a type already discussed in (B4), but we mention them here because we must 
analyze their potential interaction with the new rectangles.

\begin{enumerate}

\item [(C5)] (a)  $(D^-_{0,1}, D^-_{0,2}, F, F^1_j),  \,\, 2\le j \le p$  and \\
(b) $(D^-_{0,1}, D^-_{0,2}, F', F^1_j),  \,\,1 \le j \le p-1$.

\vskip7pt

\item [(C6)]  (a) $(D^-_{0,0}, D^-_{0,-1}, F, F^{-1}_j),  \,\, 2\le j \le q$ and\\
(b) $(D^-_{0,0}, D^-_{0,-1}, F', F^{-1}_j),  \,\, 1 \le j \le q-1$.

\end{enumerate}

\vskip5pt

Note that the $\EE$ region $G_0$ connects a large number of disks namely 
$D^+_{-1,-1}, D^+_{-1, 0},  D^+_{1,0}, D^+_{1,1}, D^+_{0,0}, D^+_{0,1}$ and $ D^+_{0,2}$.
However only a few of these participate in nontrivial rectangles as indicated in (C1) and (C2).

The rectangles in case (C1) are not adjacent to any rectangle along $D_{-1,0}$ using the same argument 
as in case (B1) in the proof of Lemma \ref{rectangle adjacencies}. The rectangle in (C2) is adjacent along 
$D_{0,1}$  to a rectangle in case (C5)(b), for $ j = 1$. That rectangle has no further adjacencies and hence 
cannot participate in a cycle.

In case (C3) the rectangles are adjacent on one side to a trivial rectangle. However on the other side they have 
no further adjacencies since there are no front rectangles meeting opposite horizontal edges of a disk (see the 
analysis of (B3) in Lemma \ref{rectangle adjacencies}). Case (C4) is handled similarly.

Case (C5) (a) The rectangles in this case have no adjacencies along $D_{0,1}$. The rectangles in  case (b) have 
no adjacencies along $D_{0,2}$.

Case (C6) (a) The rectangles there have no adjacencies along $D_{0, -1}$. In case (b) the rectangles have
no adjacencies along $D_{0,0}$.

The  cases above together with the analysis in Lemma \ref{rectangle adjacencies} show that non-trivial rectangles
cannot participate in cycles. This proves that there are essential annuli in $(\widehat{H}^r_2, \AAA)$ and this 
completes the  proof of the proposition.

\end{proof}

\section{Finishing the proof}\label{main proof}

\vskip10pt

We can now assemble the previous results to produce a 
sequence of primitive stable discrete faithful representations with
rank going to infinity which 
converges geometrically to a knot complement. 

\begin{proof}[Proof of Theorem \ref{maintheorem}]
Let $K \subset S^3$ be a knot carried by a nice flypeable trellis  $\mathcal{T}$ and satisfying the conditions of 
Theorem \ref{main-incomp}. The manifold  $ M_\infty = S^3 -  \nbhd(K)$  is hyperbolic by Proposition
\ref{hyperbolic},  so we have a discrete faithful representation   
$\eta: \pi_1(M_\infty) \to PSL_2(\mathbb{C})$.

For each $r\in\mathbb{N}$, consider the decomposition of $M_\infty$ along $\Sigma^r \ssm \nbhd(K^r)$ into 
two handlebodies 
$$V^r = H_1^r \ssm \NN(K^r)$$
and
$$W^r = H_2^r \ssm \NN(K^r).$$
 Let $i^r_* : \pi_1(V^r) \to \pi_1(M_\infty)$ be induced by the
inclusion map. Recall that $\pi_1(V^r) = F_{n_r}$, where $n_r = n_0 + 2r$. 

\begin{equation}\label{rho and eta}
\xymatrix{
                     & \pi_1(M_\infty) \ar[dr]^\eta \ar[dd]^(.7){q_m} &       \\
\pi_1(V^r) \ar[ur]^{i^r_*} \ar@{-->}[rr]^(.3){\rho^r_m}          &                  & PSL_2(\C)   \\
                     & \pi_1(K_{\Sigma^r}(1/m)) \ar[ur]^{\eta_m}  & \\
}
\end{equation}
\medskip

We let $K_{\Sigma^r}(p/q)$ denote the $p/q$ Dehn filling of $K^r$ with respect to the framing of $\Sigma^r$ 
as in  Subsection \ref{HSKT}, where we have abbreviated $K_{\Sigma^r}=K^r_{\Sigma^r}$. 
For each $m\in \Z$, let $q_m : \pi_1(M_\infty) \to \pi_1(K_{\Sigma^r}(1/m))$ be the quotient map induced by surgery.  

By Thurston's Dehn filling theorem, for large enough $|m|$ the manifolds $K_{\Sigma^r}(1/m)$  are hyperbolic, 
and there are discrete faithful representations  $\eta_m:\pi_1(K_{\Sigma^r}(1/m)) \to PSL_2(\C)$ such that the 
representations $\eta_m\circ q_m $ converge to $\eta$.   Moreover the quotient manifolds converge geometrically 
to $M_\infty$. 

Because the surgered manifold $K_{\Sigma^r}(1/m)$ is obtained by an $m$-fold Dehn twist on $K^r$, the images of 
$V^r$ and $W^r$ determine a Heegaard splitting for this manifold, and in particular the map $q_m \circ i^r_*$ is surjective.

Letting $m\to \infty$, the representations $\rho^r_m = \eta_m\circ q_m \circ i^r_*$ converge 
to $\rho^r_\infty = \eta \circ i^r_*$.

The representation $\rho^r_\infty$ satisfies the hypotheses of Proposition \ref{mainproposition}:  Hypothesis (1) 
(a cut point free Whitehead graph) follows from Lemma \ref{WHG}; hypothesis (2)  (incompressibility of 
$\Sigma^r \ssm  K^r$)  follows from Theorem \ref{main-incomp}; and hypothesis (3) (the pared manifold 
$(W^r, \bar\NN(K^r) \cap \partial W^r)$ is not an $I$ bundle) follows from Proposition \ref{flypedtrellis}. We conclude, 
by  Proposition \ref{mainproposition}, that   $\rho^r_\infty$ is primitive stable.

Since the primitive stable set $PS(F_{n_r})$ is open (see Minsky \cite{minsky:primitive}), for each  
$r$ there exists $m_r$ such  that $ \rho^r_{m_r}$ is primitive stable as well. 
In particular  the image of $\rho^r\equiv\rho^r_{m_r}$ is the whole group
$\eta_{m_r}(\pi_1(K_{\Sigma^r}(1/m_r))$, and by choosing $m_r$ sufficiently large for each $r$, 
this sequence of groups converges geometrically to $\eta(\pi_1(M_\infty))$ as $r\to\infty$. 
This is the desired sequence of representations. 

\end{proof}

\vskip10pt

\bibliographystyle{hamsplain}
\bibliography{math}

\providecommand{\bysame}{\leavevmode\hbox to3em{\hrulefill}\thinspace}
\begin{thebibliography}{10}

\bibitem{agol:tame}
I.~Agol, \emph{Tameness of hyperbolic 3-manifolds}, Preprint, 2004,
  \mbox{arXiv:math.GT/0405568}.

\bibitem{bonahon}
F.~Bonahon, \emph{Bouts des vari\'et\'es hyperboliques de dimension 3}, Ann. of
  Math. \textbf{124} (1986), 71--158.

\bibitem{bonahon:handbook}
F.~Bonahon, \emph{Geometric structures on 3-manifolds}, Handbook of geometric
  topology, North-Holland, Amsterdam, 2002, pp.~93--164.

\bibitem{calegari-gabai:tame}
D.~Calegari and D.~Gabai, \emph{Shrinkwrapping and the taming of hyperbolic
  3-manifolds}, J. Amer. Math. Soc. \textbf{19} (2006), no.~2, 385--446
  (electronic).

\bibitem{canary:covering}
R.~D. Canary, \emph{A covering theorem for hyperbolic 3-manifolds and its
  applications}, Topology \textbf{35} (1996), 751--778.

\bibitem{hempel}
J.~Hempel, \emph{3-manifolds}, Annals of Math. Studies no. 86, Princeton
  University Press, 1976.

\bibitem{jaco:lectures}
W.~Jaco, \emph{Lectures on three-manifold topology}, CBMS Regional Conference
  Series in Mathematics, vol.~43, American Mathematical Society, Providence,
  R.I., 1980.

\bibitem{jaco-shalen}
W.~H. Jaco and P.~B. Shalen, \emph{Seifert fibered spaces in 3-manifolds},
  vol.~21, Memoirs of the Amer. Math. Soc., no. 220, A.M.S., 1979.

\bibitem{johannson:JSJ}
K.~Johannson, \emph{Homotopy equivalences of 3-manifolds with boundary},
  Lecture Notes in Mathematics, vol. 761, Springer-Verlag, 1979.

\bibitem{kulkarni-shalen}
R.~S. Kulkarni and P.~B. Shalen, \emph{On {A}hlfors' finiteness theorem}, Adv.
  Math. \textbf{76} (1989), no.~2, 155--169.

\bibitem{lubotzky:autFn}
A.~Lubotzky, \emph{Dynamics of {$Aut(F_n)$} actions on group presentations and
  representations}, Preprint, 2008.

\bibitem{lustig-moriah:highgenus}
M.~Lustig and Y.~Moriah, \emph{3-manifolds with irreducible {H}eegaard
  splittings of high genus}, Topology \textbf{39} (2000), no.~3, 589--618.

\bibitem{mccullough:relative}
D.~McCullough, \emph{Compact submanifolds of 3-manifolds with boundary}, Quart.
  J. Math. Oxford \textbf{37} (1986), 299--306.

\bibitem{mccullough-miller-swarup}
D.~McCullough, A.~Miller, and G.~A. Swarup, \emph{Uniqueness of cores of
  noncompact {$3$}-manifolds}, J. London Math. Soc. (2) \textbf{32} (1985),
  no.~3, 548--556.

\bibitem{minsky:primitive}
Y.~Minsky, \emph{Note on dynamics of {$Out(F_n)$} on {${\rm
  PSL}(2,\C)$-characters}}, Preprint, 2009, \mbox{arXiv:0906.3491}.

\bibitem{morgan:geometrization}
J.~W. Morgan, \emph{On {T}hurston's uniformization theorem for
  three-dimensional manifolds}, The Smith Conjecture (H.~Bass and J.~Morgan,
  eds.), Academic Press, 1984, pp.~37--125.

\bibitem{moriah-schultens}
Y.~Moriah and J.~Schultens, \emph{Irreducible {H}eegaard splittings of
  {S}eifert fibered spaces are either vertical or horizontal}, Topology
  \textbf{37} (1998), no.~5, 1089--1112.

\bibitem{scott:core}
G.~P. Scott, \emph{Compact submanifolds of 3-manifolds}, J. London Math. Soc.
  \textbf{7} (1973), 246--250.

\bibitem{wpt:notes}
W.~Thurston, \emph{The geometry and topology of 3-manifolds}, Princeton
  University Lecture Notes, online at
  http://www.msri.org/publications/books/gt3m, 1982.

\bibitem{waldhausen}
F.~Waldhausen, \emph{On irreducible 3-manifolds which are sufficiently large},
  Ann. of Math. \textbf{87} (1968), 56--88.

\bibitem{whitehead:graph}
J.~H.~C. Whitehead, \emph{On certain sets of elements in a free group}, Proc.
  London Math. Soc. \textbf{41} (1936), 48--56.

\bibitem{whitehead:equivalence}
\bysame, \emph{On equivalent sets of elements in a free group}, Annals of Math.
  \textbf{37} (1936), 782--800.

\end{thebibliography}

\end{document}